\DeclareMathOperator{\spn}{span}
\newtheorem{theorem}{Theorem}[section]
\newtheorem{lemma}[theorem]{Lemma}
\newtheorem{prob}[theorem]{Problem}
\newtheorem{definition}[theorem]{Definition}
\newtheorem{example}[theorem]{Example}
\newtheorem{prop}[theorem]{Proposition}
\newtheorem{corollary}[theorem]{Corollary}
\newtheorem{remark}[theorem]{Remark}
\numberwithin{equation}{section}
\begin{document}
\title[Representation and normality  of Hyponormal operators in $\overline{\mathcal{AN}(H)}$ ]{Representation and normality of Hyponormal operators in the closure of $\mathcal{AN}$-operators}
\author{G. Ramesh}
 \address{G. Ramesh, Department of Mathematics, IIT Hyderabad, Kandi, Sangareddy, Telangana- 502284, India.}
 \email{rameshg@math.iith.ac.in}
\author{Shanola S. Sequeira}
 \address{Shanola S. Sequeira, Department of Mathematics, IIT Hyderabad, Kandi, Sangareddy, Telangana- 502284, India.}
 \email{ma18resch11001@iith.ac.in}
 \keywords{Absolutely norm attaining operator, absolutely minimum attaining operator, essential spectrum, compact operator, partial isometry, quasinormal operator, hyponormal operator}
 \subjclass[2010]{47A10; 47B15; 47B07}
\maketitle
\begin{abstract} Let $H_1$, $H_2$ be complex Hilbert spaces. A bounded linear operator $T : H_1 \to H_2$ is said to be norm attaining if there exists a unit vector $x \in H_1$ such that $\|Tx\| = \|T\|$. If $T|_{M} : M \to H_2$ is norm attaining for every closed subspace $M$ of $H_1$, then we say that $T$ is an absolutely norm attaining ($\mathcal{AN}$-operator). If the norm of the operator is replaced by the minimum modulus $m(T) = \inf\{\|Tx\| : x \in H_1, \|x\| =1\}$, then $T$ is said to be a minimum attaining and an absolutely minimum attaining operator ($\mathcal{AM}$-operator), respectively.

	In this article, we give representations of quasinormal $\mathcal{AN}$, $\mathcal{AM}$-operators and the operators in the closure of these two classes.  Later we extend these results to  the class of hyponormal operators in the closure of $\mathcal{AN}$-operators and further look at some sufficient conditions under which these operators become normal.
\end{abstract}

\section{Introduction}

In the literature, a vast study has been done on the class of non-normal operators and on the conditions implying the normality of them. The class of non-normal operators includes quasinormal, hyponormal, paranormal operators, etc. and the conditions include compactness of operators, area of the spectrum i.e., operators having spectrum with area measure zero, countability of the spectrum, etc. We refer to \cite{ANDO,BER1,Putnam,QIU,STA} for more details on these topics.

One of the important classes of non-normal operators which invoked interest in a lot of researchers is the class of hyponormal operators \cite{BER1,Putnam,STA}. This class contains normal operators but yet smaller in $\mathcal{B}(H)$, the space of all bounded linear operators on a Hilbert space $H$. More details on hyponormal operators can be found in \cite{MAR,XIA}. Much study is done on the conditions under which a hyponormal operator becomes normal. A few developments in this regard are the compactness of hyponormal operators, which implies normality \cite{ANDO,STA} and  the Putnam's inequality \cite[Theorem 4.1, Page 31]{MAR} which infers that if the spectrum of a hyponormal operator has area measure zero, then it has to be normal \cite{STA}. Several other sufficient conditions are also studied under which a hyponormal operator becomes normal. We refer \cite{MAR} for more details.

We can replace compactness property by a weaker property called $\mathcal{AN}$-property or the absolutely norm attaining property of operators. This class contains compact operators, the partial isometries with finite dimensional nullspace and was first introduced by Carvajal and Neves in \cite{CAR1}. We refer to \cite{PAN,RAM2,VEN} for more details and the spectral theory of this class of operators. In \cite{BALA2}, it is shown that if the essential spectrum and the Weyl's spectrum of hyponormal $\mathcal{AN}$-operators are the same, then such operators become normal. In \cite{NB *-para}, it is also shown that invertible hyponormal $\mathcal{AN}$-operators are normal.

The present article generalizes the results of \cite{BALA1,BALA2,RAM2}. We are mainly interested to look at the conditions for normality of hyponormal operators when it belongs to the closure of $\mathcal{AN}$-operators, as this class contains $\mathcal{AN}$-operators and not contained in the set of norm attaining operators. Here the closure is taken with respect to the operator norm topology. See \cite{RAMSSS} for a detailed study of  the closure of $\mathcal{AN}$-operators.

 Analogous to absolutely norm attaining operators, a class of absolutely minimum attaining operators ($\mathcal{AM}$-operators) was also studied by Carvajal and Neves in \cite{CAR2} and further developments of this class can be found in \cite{BALA1,GAN}.
 
Yoshino in \cite{YOS} showed that the only paranormal norm attaining Toeplitz operator is a scalar multiple of an isometry and paranormal norm attaining Hankel operator is normal. In a recent article \cite{RAMSSSJMAA}, the authors studied a characterization of $\mathcal{AN}$-Toeplitz and $\mathcal{AM}$-Hankel operators.

In \cite{RAMSSS}, it was shown that the closure of $\mathcal{AN}$-operators is the same as that of the closure of $\mathcal{AM}$-operators. It can be seen that this class contains the class of compact operators, isometries, partial isometries with finite dimensional nullspace etc., hence not equal to $\mathcal{B}(H)$, where $H$ is infinite dimensional. The characterization of positive operators in $\overline{\mathcal{AN}(H)}$ and a representation of normal operators is also discussed in the same paper. Hence in the present article we continue our discussion on a few non-normal operators in $\overline{\mathcal{AN}(H)}$ especially hypornormal operators and a sub-class of it called quasinormal operators. This paper mainly focuses on the following results.
\begin{enumerate}
	\item A representation of quasinormal $\mathcal{AN}$ and $\mathcal{AM}$-operators
	\item A representation of quasinormal operators in the closure of $\mathcal{AN}$-operators.
	\item A representation of hyponormal $\mathcal{AM}$-operators. \item A representation of hyponormal operators in $\overline{\mathcal{AN}(H)}$.
 \item Conditions implying the normality of hyponormal operators in $\overline{\mathcal{AN}(H)}$, such as, invertibility of operators, operators with the same essential and the Weyl's spectrum etc.
\end{enumerate}

We organize this paper as follows: In the remaining part of this section we recall some preliminaries which are needed for developing the article. In section 2, we prove some basic properties satisfied by the operators in $\overline{\mathcal{AN}(H)}$. In section 3, we give a representation  of  quasinormal $\mathcal{AN}$, $\mathcal{AM}$-operators and operators in $\overline{\mathcal{AN}(H)}$. In section 4, we give a representation of hyponormal operator in $\overline{\mathcal{AN}(H)}$, as a consequence we get a representation of operators in $\overline{\mathcal{AN}(H)}$, when its adjoint is hyponormal. Finally in the last section, we give some sufficient conditions under which these operators become normal.
\subsection{Preliminaries}
Throughout the article let $H, H_1, H_2$ denote infinite dimensional complex Hilbert spaces and $\mathcal{B}(H_1, H_2)$ denote the Banach space of all bounded linear operators from $H_1$ into $H_2$. Let $C(0, \alpha)$ denote the sphere with center zero and radius $\alpha$.
We denote the range and nullspaces of $T \in \mathcal{B}(H_1, H_2)$ by $R(T)$ and $N(T)$, respectively. If $R(T)$ is finite dimensional, then $T$ is said to be finite rank operator and $T$ is compact if the image of a bounded set has compact closure. We denote the set of all finite rank and compact operators from $H_1$ to $H_2$ by $\mathcal{F}(H_1, H_2)$ and $\mathcal{K}(H_1, H_2)$, respectively.  If $H_1 = H_2 = H$, then we write $\mathcal{B}(H_1, H_2) = \mathcal{B}(H)$, $\mathcal{K}(H_1, H_2) = \mathcal{K}(H)$ and $\mathcal{F}(H_1, H_2) = \mathcal{F}(H)$.

Given any $T \in \mathcal{B}(H_1, H_2)$, the adjoint operator $T^* \in \mathcal{B}(H_2, H_1)$ satisfies the following condition.

\[\langle Tx, y \rangle = \langle x, T^{*}y \rangle, \ \forall x \in H_1,\  y \in H_2.\]

Next we define the notion of the spectrum of an operator. If $T \in \mathcal{B}(H)$, then we define spectrum of $T$ by
\[\sigma(T) := \{\lambda \in \mathbb{C} : T - \lambda I \ \text{is} \ \text{not} \ \text{invertible} \ \text{in} \ \mathcal{B}(H)\}.\]

It is well known that the spectrum of $T$ can be decomposed as the disjoint union of the point spectrum $\sigma_{p}(T):= \{\lambda \in \mathbb{C} : T-\lambda I \ \text{is}\; \text{not}\; \text{one-one} \ \text{in} \ \mathcal{B}(H)\}$, the residual spectrum $\sigma_{r}(T) := \{\lambda \in \mathbb{C} : T-\lambda I \ \text{is}\; \text{one-one}\; \text{but}\; \overline{R(T-\lambda I)} \neq H\}$ and the continuous spectrum $\sigma_{c}(T) := \sigma(T) \setminus (\sigma_{p}(T) \cup \sigma_{r}(T))$.

 If $T \in \mathcal{B}(H_1, H_2)$, we call $|T| = (T^*T)^{1/2}$ as the modulus of $T$ and there exists a unique partial isometry $W \in \mathcal{B}(H_1, H_2)$ such that $T = W |T|$ satisfying $N(W) = N(T)$. This is called the polar decomposition of $T$.

 We say $T \in \mathcal{B}(H)$ is normal if $TT^* = T^*T$, self-adjoint if $T = T^*$ and $T$ is positive if $\langle Tx, x\rangle \geq 0$. If $\mathcal{A} \subset \mathcal{B}(H)$, then the set of all positive operators in $\mathcal{A}$ is denoted by $\mathcal{A}_{+}$. For self-adjoint operators $A$ and $B$ if $B-A \in \mathcal{B}(H)_{+}$, then we denote this by $A \leq B$. For more details about the basic definitions and results in operator theory we refer to \cite{CON,Gohberg,FUR,KUB,ReedSimon}.

	Let $T \in \mathcal{B}(H)$ and $\pi : \mathcal{B}(H) \to \mathcal{B}(H)/ \mathcal{K}(H)$ be the quotient map. Then $T$ is said to be a Fredholm operator if $\pi(T)$ is invertible in $\mathcal{B}(H)/ \mathcal{K}(H)$.

We can use the following result as an equivalent definition for the Fredholm operator.
\begin{prop}\cite[Proposition 2.10, Page 359]{CON} \label{Fredholmdef}
	An operator $T \in \mathcal{B}(H)$ is a Fredholm operator if and only if $R(T)$ is closed and both $N(T)$ and $N(T^*)$ are finite dimensional.
\end{prop}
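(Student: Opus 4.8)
The plan is to argue directly from the definition given just above the statement: $T$ is Fredholm precisely when $\pi(T)$ is invertible in the Calkin algebra $\mathcal{B}(H)/\mathcal{K}(H)$, equivalently when there exist $S \in \mathcal{B}(H)$ and compact operators $K_1, K_2 \in \mathcal{K}(H)$ with $ST = I + K_1$ and $TS = I + K_2$ (a parametrix for $T$). I would prove the two implications of the equivalence separately using this reformulation.

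For the forward implication, assume such $S, K_1, K_2$ exist. Restricting $ST = I + K_1$ to $N(T)$ gives $x = -K_1 x$ for every $x \in N(T)$, so the identity operator on $N(T)$ agrees with a compact operator; since the closed unit ball of an infinite-dimensional Hilbert space is not norm-compact, $N(T)$ must be finite dimensional. Taking adjoints in $TS = I + K_2$ yields $S^{*}T^{*} = I + K_2^{*}$, and the same restriction argument applied to $N(T^{*})$ shows $N(T^{*})$ is finite dimensional. To see that $R(T)$ is closed, I would show $T$ is bounded below on $N(T)^{\perp}$: if $x_n \in N(T)^{\perp}$ with $\|x_n\| = 1$ and $\|Tx_n\| \to 0$, then, passing to a subsequence along which $K_1 x_n$ converges (compactness of $K_1$), the identity $x_n = STx_n - K_1 x_n$ forces $x_n$ to converge to some unit vector $y \in N(T)^{\perp}$, and continuity gives $Ty = 0$, so $y \in N(T) \cap N(T)^{\perp} = \{0\}$, a contradiction. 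Hence $T|_{N(T)^{\perp}}$ is bounded below, and therefore $R(T) = T\bigl(N(T)^{\perp}\bigr)$ is closed.

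For the reverse implication, assume $R(T)$ is closed and $N(T), N(T^{*})$ are finite dimensional. Using the orthogonal decompositions $H = N(T) \oplus N(T)^{\perp}$ and $H = R(T) \oplus N(T^{*})$ (the latter because $R(T)^{\perp} = N(T^{*})$ and $R(T)$ is closed), the restriction $T_0 := T|_{N(T)^{\perp}} : N(T)^{\perp} \to R(T)$ is a continuous bijection of Hilbert spaces, hence boundedly invertible by the open mapping theorem. Defining $S := T_0^{-1} P \in \mathcal{B}(H)$, where $P$ is the orthogonal projection of $H$ onto $R(T)$, a direct computation gives $ST = I - P_{N(T)}$ and $TS = I - P_{N(T^{*})}$, with $P_{N(T)}$ and $P_{N(T^{*})}$ the finite-rank (hence compact) orthogonal projections onto $N(T)$ and $N(T^{*})$. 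Thus $\pi(S)$ is a two-sided inverse of $\pi(T)$ in the Calkin algebra, so $T$ is Fredholm. The only genuinely delicate point is the closedness of $R(T)$ in the forward direction; the compactness-plus-subsequence device handles it, and everything else is routine bookkeeping with the parametrix relations and the orthogonal decompositions.
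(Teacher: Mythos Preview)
Your proof is correct and is the standard parametrix argument found in most textbooks. However, the paper does not supply its own proof of this proposition: it is quoted verbatim as a preliminary result from Conway \cite[Proposition~2.10, Page~359]{CON}, so there is nothing in the paper to compare against. Your argument is essentially the one Conway gives.
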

The index of a Fredholm operator $T$ is defined by
\[ ind(T) = \text{dim} N(T) - \text{dim} N(T^*). \]
Writing $\mathscr{F}$ as the class of all Fredholm operators and $\mathscr{F}_0$ as the class of all Fredholm operators with index zero, we define the essential spectrum of $T$ by
\[\sigma_{ess}(T) = \{\lambda \in \mathbb{C} : T -\lambda I \notin \mathscr{F}\}.\]

The above definition of the essential spectrum can be found in \cite[Propositions 4.2, 4.3]{CON}. The essential spectrum of $T$ can also be defined by
$\sigma_{ess}(T) = \sigma(\pi(T))$.

Similarly the Weyl spectrum of $T$ \cite{COB} is defined by
\[\omega(T) = \{\lambda \in \mathbb{C} : T - \lambda I  \notin \mathscr{F}_0\}.\]

More details on the essential spectrum can be found in \cite{CON,MUL,ReedSimon} and that of the Weyl spectrum can be found in \cite{BER2,COB}.

Another useful definition of an essential spectral point of a self-adjoint operator is given in the following theorem.
\begin{theorem}\label{ess spectrum of self-adjoint}\cite[Theorem VII.11, Page 236]{ReedSimon}
	Let $T = T^* \in \mathcal{B}(H)$. Then $\lambda \in \sigma_{ess}(T)$ if and only one or more of the following conditions hold.
	\begin{enumerate}
		\item $\lambda$ is an eigenvalue of infinite multiplicity.
		\item $\lambda$ is the limit point of $\sigma_{p}(T)$.
		\item $\lambda \in \sigma_{c}(T)$.
	\end{enumerate}
\end{theorem}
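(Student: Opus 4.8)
The plan is to derive the statement directly from the spectral theorem. Writing $T = \int_{\mathbb{R}} s\, dE(s)$ with $E$ the projection-valued spectral measure of $T$ and setting $J_\varepsilon = (\lambda - \varepsilon, \lambda + \varepsilon)$, I would first reduce everything to the single criterion, which I call $(\star)$:
\[
\lambda \in \sigma_{ess}(T) \iff \dim R\bigl(E(J_\varepsilon)\bigr) = \infty \ \text{ for every } \varepsilon > 0 .
\]
For the implication ``$\Leftarrow$'' in $(\star)$ I would pick, for each $n$, a unit vector $x_n \in R(E(J_{1/n}))$ with the $x_n$ mutually orthogonal (possible since each $R(E(J_{1/n}))$ is infinite-dimensional); then $(x_n)$ is orthonormal, hence converges weakly to $0$, and $\|(T-\lambda I)x_n\| \leq 1/n \to 0$. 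The existence of such a Weyl sequence is incompatible with $T - \lambda I$ being Fredholm: a Fredholm operator is bounded below on the orthogonal complement of its finite-dimensional kernel $N$, while $\|P_{N^{\perp}} x_n\| \to 1$ and $(T - \lambda I) P_{N^{\perp}} x_n = (T-\lambda I)x_n \to 0$, contradicting Proposition \ref{Fredholmdef}. For ``$\Rightarrow$'' I would argue contrapositively: if $\dim R(E(J_\varepsilon)) < \infty$ for some $\varepsilon$, then $H = R(E(J_\varepsilon)) \oplus R(E(\mathbb{R} \setminus J_\varepsilon))$, on the second summand $T - \lambda I$ is bounded below (by $\varepsilon$) hence invertible, and the first summand is finite-dimensional, so $T - \lambda I$ is Fredholm and $\lambda \notin \sigma_{ess}(T)$.

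Granting $(\star)$, I would conclude by analysing how $R(E(J_\varepsilon))$ can stay infinite-dimensional for all $\varepsilon > 0$. Splitting off the atom at $\lambda$, write $E(J_\varepsilon) = E(\{\lambda\}) + E(J_\varepsilon \setminus \{\lambda\})$. If $\dim R(E(\{\lambda\})) = \infty$, then $\lambda$ is an eigenvalue of infinite multiplicity, which is case (1). Otherwise $R(E(\{\lambda\}))$ is finite-dimensional, so $(\star)$ forces $\dim R(E(J_\varepsilon \setminus \{\lambda\})) = \infty$ for every $\varepsilon$, and two subcases occur: either $T$ has eigenvalues $\mu_n \neq \lambda$ with $\mu_n \to \lambda$, so $\lambda$ is a limit point of $\sigma_p(T)$ --- case (2) --- or some punctured neighbourhood of $\lambda$ contains no eigenvalues, on which $E$ restricts to a non-zero non-atomic measure, placing $\lambda$ in the continuous spectrum $\sigma_c(T)$ --- case (3). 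The converse directions are immediate: each of (1), (2), (3) evidently makes $\dim R(E(J_\varepsilon)) = \infty$ for every $\varepsilon$, so $\lambda \in \sigma_{ess}(T)$ by $(\star)$.

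The hard part is not a single estimate but the bookkeeping between the two descriptions of the spectrum. One must be careful that the ``continuous spectrum'' intended in (3) is the spectrum of the restriction of $T$ to the continuous (non-atomic) part of its spectral decomposition in the sense of \cite{ReedSimon}, rather than merely the set-theoretic complement $\sigma(T)\setminus(\sigma_p(T)\cup\sigma_r(T))$ of the preliminaries, since an eigenvalue may also belong to the former; this is exactly what makes the three alternatives genuinely exhaustive. The other point needing care is the step ``Weyl sequence $\Rightarrow$ not Fredholm,'' handled cleanly via the fact that a Fredholm operator is bounded below modulo a finite-dimensional subspace. Both ingredients are classical, so once $(\star)$ is in place the theorem follows.
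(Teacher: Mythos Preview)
The paper does not give its own proof of this theorem: it is quoted verbatim from Reed--Simon with the citation \cite[Theorem VII.11, Page 236]{ReedSimon} and no argument is supplied. So there is nothing in the paper to compare your proposal against; your write-up is essentially the standard spectral-theoretic proof one finds in Reed--Simon itself, via Weyl's criterion $(\star)$ and a case analysis on the atom $E(\{\lambda\})$.

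Your proof sketch is correct, and you have put your finger on a genuine inconsistency in the paper. The preliminaries define $\sigma_c(T) = \sigma(T)\setminus(\sigma_p(T)\cup\sigma_r(T))$, which for self-adjoint $T$ is simply $\sigma(T)\setminus\sigma_p(T)$; with \emph{that} definition the three alternatives in the theorem are not exhaustive (an eigenvalue of finite multiplicity embedded in purely continuous spectrum, and isolated in $\sigma_p(T)$, lies in $\sigma_{ess}(T)$ but satisfies none of (1)--(3)). The statement is only correct if, as in Reed--Simon, $\sigma_c(T)$ means the spectrum of $T$ restricted to the continuous subspace $H_{\mathrm{cont}}$. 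You were right to flag this and to base your case analysis on the Reed--Simon interpretation. One small point: in your subcase (b) it is worth saying explicitly that the range of $E(J_{\varepsilon}\setminus\{\lambda\})$, being free of eigenvectors, lies inside $H_{\mathrm{cont}}$, which is what forces $\lambda\in\sigma(T|_{H_{\mathrm{cont}}})$; you gesture at this but the inclusion is the key step.
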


For a self-adjoint operator $T$, the set $\sigma_{d}(T) = \sigma(T)\setminus \sigma_{ess}(T)$ is called the discrete spectrum of $T$. More precisely, $\sigma_{d}(T) = \{\lambda \in \sigma(T): \lambda \  \text{is} \ \text{an} \ \text{isolated} \ \text{eigenvalue}\ \text{with} \ \text{finite} \ \text{multiplicity} \}$.

For $T \in \mathcal{B}(H)$, let $\pi_{00}(T)$ denote the set of all isolated eigenvalues of $T$ with finite multiplicity. In particular if $T = T^*$, we have $\pi_{00}(T) = \sigma_{d}(T)$.

\begin{remark}
	There are various definitions for the essential spectrum of an operator, but all the definitions coincide when the operator is self-adjoint. See \cite[Section 3.14]{BarrySimon4} for more details.
\end{remark}

For $T \in \mathcal{B}(H)$, the essential minimum modulus of $T$ \cite{BOU} is defined by
\[ m_e(T) = \inf \{\lambda : \lambda \in \sigma_{ess}(|T|)\}.\]

Next we define two classes of non-normal operators.

\begin{definition}
	Let $T \in \mathcal{B}(H)$, then
	\begin{enumerate}
		\item $T$ is quasinormal if $T$ commutes with $T^*T$.
		\item $T$ is hyponormal if $T^*T - TT^* \geq 0$.
	\end{enumerate}
\end{definition}
Clearly we can see that these classes contain the class of normal operators and also quasinormal operators are subsets of hyponormal operators. (See \cite{FUR} for more details).

\begin{definition}\cite[Page 30]{CON}
	Let $\{H_n\}_{n \in \mathbb{N}}$ be the family of Hilbert Spaces and  $T_n \in \mathcal{B}(H_n)$ for all $n \in \mathbb{N}$.  Let $H  = \displaystyle{\bigoplus_{n \in \mathbb{N}}} H_n$. Then we define direct sum of $T = \displaystyle{\bigoplus_{n \in \mathbb{N}}} T_n : H \to H$ of $(T_n)$ by
	\begin{equation*}
	T(x_1, x_2,\dots) = (T_1x_1, T_2x_2,\dots), \ \forall \ x_n \in H_n, n \in \mathbb{N} .
	\end{equation*}
	If $\displaystyle{\sup_{n \in \mathbb{N}}} \{\|T_n\|\} < \infty$, then $T \in \mathcal{B}(H)$  and $\|T\| = \displaystyle{\sup_{n \in \mathbb{N}}}\{\|T_n\|\}$.
\end{definition}

\begin{definition} \cite[Definition 1.1, 1.2]{CAR1}
	A bounded operator $T: H_1 \to H_2$ is called norm attaining if there exists $x \in H_1$  with $\|x\|= 1$ such that $\|Tx\| = \|T\|$ and absolutely norm attaining ($\mathcal{AN}$-operator) if $T|_{M} : M \to H_{2}$ is norm attaining for every closed subspace $M$ of $H_1$.
\end{definition}

Similar to the concept of norm of an operator, we have the concept of minimum modulus of the operator which is denoted as $m(T)$ and defined by
\begin{equation*}
m(T):= \inf\{\|Tx\| : x \in H_1, \|x\| = 1 \}.
\end{equation*}
\begin{definition}\cite[Definition 1.1, 1.4]{CAR2}
	An operator $T \in \mathcal{B}(H_1, H_2)$ is called  minimum attaining if there exists $x_0 \in S_{H_1}$ such that $m(T) = \|Tx_0\|$. If $T|_M : M \to H_2$  is minimum attaining for every closed subspace $M$ of $H_1$,  then $T$ is called absolutely minimum attaining or $\mathcal{AM}$- operator.
\end{definition}
The sets of all $\mathcal{AN}$ and $\mathcal{AM}$-operators from $H_1$ to $H_2$ are denoted by $\mathcal{AN}(H_1,H_2)$ and $\mathcal{AM}(H_1,H_2)$, respectively and when $H_1=H_2=H$, we write $\mathcal{AN}(H,H) := \mathcal{AN}(H)$ and $\mathcal{AM}(H,H) := \mathcal{AM}(H)$. More details on $\mathcal{AN}$ operators can be found in \cite{CAR1,PAN,RAM2,VEN,BALA2} and that of  $\mathcal{AM}$ operators can be found in \cite{BALA1,CAR2,GAN}.

Next we quote one of the important results proved in \cite{RAMSSS} regarding the characterization for positive operators in $\overline{\mathcal{AN}(H)}$ in terms of the essential spectrum.
\begin{theorem}\cite [Theorem 4.5]{RAMSSS}\label{positiveessentialspectrum}
		Let $T \in \mathcal{B}(H)_{+}$. Then $T \in \overline{\mathcal{AN}(H)}_{+}$  if and only if $\sigma_{ess}(T)$ is a singleton set.
\end{theorem}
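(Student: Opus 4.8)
The plan is to establish both implications from the structure theory of positive $\mathcal{AN}$-operators, combined with the stability of the essential spectrum of self-adjoint operators under small norm perturbations.

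For the direction ``$T \in \overline{\mathcal{AN}(H)}_+$ implies $\sigma_{ess}(T)$ singleton'', I would start with $T_n \in \mathcal{AN}(H)$, $T_n \to T$ in operator norm, and first upgrade $(T_n)$ to a sequence of \emph{positive} $\mathcal{AN}$-operators. Since $\|Sx\| = \||S|x\|$ for all $x$ (from the polar decomposition $S = W|S|$ with $N(W) = N(S)$), an operator $S$ is absolutely norm attaining if and only if $|S|$ is; hence each $|T_n| \in \mathcal{AN}(H)_+$, and because $|T_n|^2 = T_n^*T_n \to T^*T = T^2$ in norm and the square root is norm-continuous on positive operators, $|T_n| \to |T| = T$. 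By the Carvajal--Neves description of positive $\mathcal{AN}$-operators \cite{CAR1}, each $|T_n|$ has the form $\alpha_n I + K_n + F_n$ with $\alpha_n \ge 0$, $K_n \ge 0$ compact and $F_n$ finite rank self-adjoint, so $\sigma_{ess}(|T_n|) = \{\alpha_n\}$ is a singleton. Passing to the Calkin algebra, $\pi(|T_n|) \to \pi(T)$ and both are positive, hence normal; since the spectrum of a normal element of a $C^*$-algebra is $1$-Lipschitz in the element with respect to the Hausdorff metric, $\operatorname{diam}\sigma_{ess}(T) = \lim_n \operatorname{diam}\sigma_{ess}(|T_n|) = 0$, and as $\dim H = \infty$ the set $\sigma_{ess}(T)$ is nonempty, so it is a singleton.

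For the converse, suppose $T \ge 0$ with $\sigma_{ess}(T) = \{\alpha\}$; then $\alpha \ge 0$ since $\sigma(T) \subseteq [0,\infty)$. Put $A := T - \alpha I$, a self-adjoint operator with $\sigma_{ess}(A) = \{0\}$. By Theorem~\ref{ess spectrum of self-adjoint} applied to $A$, every nonzero point of $\sigma(A)$ is an isolated eigenvalue of finite multiplicity, so for each $\varepsilon > 0$ the spectral projection of $A$ for $\{|\lambda| \ge \varepsilon\}$ is finite rank while $A$ acts with norm $\le \varepsilon$ off it; hence $A$ is compact and $T = \alpha I + K$ with $K := A$ compact self-adjoint. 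Taking a spectral decomposition $K = \sum_j \mu_j \langle\,\cdot\,,e_j\rangle e_j$ and its truncations $K^{(n)} := \sum_{j \le n}\mu_j\langle\,\cdot\,,e_j\rangle e_j$, I get $\|K - K^{(n)}\| \to 0$; moreover $T \ge 0$ gives $\alpha + \mu_j = \langle Te_j,e_j\rangle \ge 0$, so $\alpha I + K^{(n)} \ge 0$, and as $\alpha I$ plus a finite rank self-adjoint operator with $\alpha I + K^{(n)} \ge 0$, it is an $\mathcal{AN}$-operator by \cite{CAR1}. Therefore $T = \lim_n(\alpha I + K^{(n)}) \in \overline{\mathcal{AN}(H)}$, and $T \in \overline{\mathcal{AN}(H)}_+$ since $T \ge 0$.

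I expect the main obstacle to be the reduction to a positive approximating sequence in the forward direction: the clean $1$-Lipschitz stability of the essential spectrum is available only for normal elements of the Calkin algebra, so one genuinely needs to replace the $\mathcal{AN}$-approximants $T_n$ by the positive $\mathcal{AN}$-operators $|T_n|$, which relies on the invariance of the $\mathcal{AN}$-property under taking moduli and on the norm-continuity of $S \mapsto |S|$; the remaining ingredients — the structure theorem for positive $\mathcal{AN}$-operators and the compactness of a self-adjoint operator with essential spectrum $\{0\}$ — are essentially bookkeeping once set up.
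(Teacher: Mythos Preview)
This theorem is not proved in the present paper: it is quoted from the companion preprint \cite{RAMSSS} and used throughout as a black box, so there is no proof here to compare your argument against.

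Your argument is nonetheless correct. In the forward direction, replacing the approximants $T_n$ by $|T_n|$ is legitimate because $\|Sx\|=\||S|x\|$ for all $x$ and the modulus is norm-continuous; then each $|T_n|$ is a positive $\mathcal{AN}$-operator, hence of the form $\alpha_n I$ plus a compact self-adjoint perturbation, so $\sigma_{ess}(|T_n|)=\{\alpha_n\}$, and the Hausdorff--Lipschitz continuity of the spectrum of self-adjoint elements in the Calkin algebra forces $\sigma_{ess}(T)$ to be a single point. For the converse, the observation that a self-adjoint operator with $\sigma_{ess}=\{0\}$ is compact gives $T=\alpha I + K$ with $K$ compact self-adjoint, and your truncations $\alpha I + K^{(n)}$ are indeed positive $\mathcal{AN}$-operators (their essential spectrum is $\{\alpha\}$ with only finitely many eigenvalues below it, which is exactly the spectral characterization of the positive $\mathcal{AN}$-property). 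One bibliographic quibble: the structure theorem for positive $\mathcal{AN}$-operators you invoke is in \cite{PAN} and \cite{VEN} rather than in \cite{CAR1}. Note also that once you have $T=\alpha I+K$, splitting $K$ into its positive and negative parts recovers precisely the decomposition of Theorem~\ref{positivechar}, so your converse is in effect a proof of that companion result as well.
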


Next we state the result which mainly invoked the question for developing this article.

\begin{theorem} \label{compacthypo} \cite{ANDO,BER1,STA}
A compact hyponormal operator is normal.
\end{theorem}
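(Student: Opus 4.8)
The plan is to combine the Riesz--Schauder structure of compact operators with two elementary features of hyponormality: that the closed span of the eigenvectors reduces the operator, and that a quasinilpotent hyponormal operator vanishes. First, recall that a hyponormal $S$ satisfies $\|S^*x\|\le\|Sx\|$ for every $x$ (expand $\langle(S^*S-SS^*)x,x\rangle\ge 0$), so $N(S)\subseteq N(S^*)$; since $T-\lambda I$ is again hyponormal for every $\lambda\in\mathbb{C}$, this forces $T^*x=\bar\lambda x$ whenever $Tx=\lambda x$. Two standard computations then follow: eigenvectors of $T$ for distinct eigenvalues are mutually orthogonal, and every eigenspace $N(T-\lambda I)$ reduces $T$ (it is $T$-invariant, and $T^*$-invariant because $T^*$ acts on it as $\bar\lambda$).

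Next I would invoke that $\sigma(T)$ is countable with $0$ the only possible accumulation point and that every nonzero spectral value is an eigenvalue of finite multiplicity. Let $M$ be the closed linear span of the eigenvectors of $T$ corresponding to nonzero eigenvalues. By the first step $M$ reduces $T$ and admits an orthonormal basis of eigenvectors $\{e_j\}$ with $Te_j=\lambda_j e_j$ and $T^*e_j=\bar\lambda_j e_j$, so $T|_M$ is a bounded diagonal operator in this basis and hence normal. The complementary piece $T_0:=T|_{M^\perp}$ is compact (restriction of a compact operator to a reducing subspace), hyponormal (the defect $T^*T-TT^*\ge 0$ leaves $M^\perp$ invariant and restricts to $T_0^*T_0-T_0T_0^*$), and has no nonzero eigenvalue, since any such eigenvector would be an eigenvector of $T$ lying in $M\cap M^\perp=\{0\}$. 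A compact operator with no nonzero eigenvalue has $\sigma(T_0)\subseteq\{0\}$, that is, $T_0$ is quasinilpotent.

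It then remains to show $T_0=0$, and this is the only step that needs an inequality rather than bookkeeping: a hyponormal operator is normaloid, $\|S\|=r(S)$. I would prove this in passing: hyponormality gives $\|Sx\|^2=\langle S^*Sx,x\rangle\le\|S^*Sx\|\,\|x\|\le\|S(Sx)\|\,\|x\|=\|S^2x\|\,\|x\|$, so $S$ is paranormal; an induction on $n$ then yields $\|S^nx\|\ge\|Sx\|^n$ for unit vectors $x$, hence $\|S^n\|=\|S\|^n$ and $r(S)=\|S\|$. Applying this to $S=T_0$ gives $\|T_0\|=r(T_0)=0$, so $T_0=0$ and $T=(T|_M)\oplus 0$ is normal. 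The one genuine obstacle is isolating and justifying the normaloid property of hyponormal operators; everything else is reducing-subspace bookkeeping together with the Riesz--Schauder structure of compact operators.
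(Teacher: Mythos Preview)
The paper does not supply a proof of this theorem; it is quoted as a classical result with citations to Ando, Berberian, and Stampfli, and is used only as a tool later on. Your argument is a correct, self-contained proof along the lines of Stampfli's original: the Riesz--Schauder structure isolates the nonzero point spectrum, hyponormality makes each eigenspace reducing and forces $T^*x=\bar\lambda x$ whenever $Tx=\lambda x$, and the normaloid property annihilates the quasinilpotent remainder on $M^\perp$.

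The one place worth tightening is the induction yielding $\|S^nx\|\ge\|Sx\|^n$ for unit $x$. The cleanest route is to apply paranormality to $S^{k-1}x$ to obtain $\|S^kx\|^2\le\|S^{k+1}x\|\,\|S^{k-1}x\|$, so that $k\mapsto\log\|S^kx\|$ is convex whenever $Sx\ne0$; convexity together with $\|S^0x\|=1$ then gives $\|S^nx\|\ge\|Sx\|^n$ directly. With that filled in, your proof is complete.
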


\section{Some results of operators in $\overline{\mathcal{AN}(H)}$}
We start this section by stating the following characterization of positive operators in $\overline{\mathcal{AN}(H)}$.
\begin{theorem}\cite[Theorem 4.2]{RAMSSS}\label{positivechar}
	Let $T \in \mathcal{B}(H)$. Then the following are equivalent.
	\begin{enumerate}
		\item $ T \in \overline{\mathcal{AN}(H)}_{+}$.
		\item There exists $\alpha \geq 0$, $K_1, K_2 \in \mathcal{K}(H)_{+}$ with $K_1K_2 = 0$ and $K_1 \leq \alpha I$ such that $T = \alpha I - K_1 + K_2$. Moreover this representation is unique.
	\end{enumerate}
\end{theorem}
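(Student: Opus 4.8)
The plan is to derive both implications from the essential-spectrum criterion of Theorem~\ref{positiveessentialspectrum}, obtaining the explicit decomposition by splitting $T$ around the single point of its essential spectrum.

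For $(1) \Rightarrow (2)$: starting from $T \in \overline{\mathcal{AN}(H)}_{+}$, Theorem~\ref{positiveessentialspectrum} gives $\sigma_{ess}(T) = \{\alpha\}$ for a unique $\alpha$, with $\alpha \geq 0$ since $\sigma(T) \subseteq [0,\infty)$. I would then consider $S := T - \alpha I$, which is self-adjoint with $\sigma_{ess}(S) = \{0\}$; by the spectral theorem each spectral projection of $S$ over $\{\lambda : |\lambda| \geq \varepsilon\}$ is finite rank, so $S$ is a norm limit of finite rank operators and hence compact. Taking $K_{1} := S_{-}$ and $K_{2} := S_{+}$ (the negative and positive parts of $S$) yields positive compact operators with $K_{1}K_{2} = 0$ and $T = \alpha I - K_{1} + K_{2}$; the bound $K_{1} \leq \alpha I$ follows because on the spectral subspace where $S \leq 0$ one has $K_{1} = \alpha I - T \leq \alpha I$ (as $T \geq 0$), while $K_{1} = 0$ on the complementary subspace.

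For $(2) \Rightarrow (1)$: here $T$ is self-adjoint, $\alpha I - K_{1} \geq 0$ by the hypothesis $K_{1} \leq \alpha I$, and $K_{2} \geq 0$, so $T \geq 0$; moreover $T - \alpha I = K_{2} - K_{1}$ is compact, so $\sigma_{ess}(T) = \sigma_{ess}(\alpha I) = \{\alpha\}$ is a singleton and Theorem~\ref{positiveessentialspectrum} applies. For uniqueness, two such representations force $\sigma_{ess}(T)$ to equal both $\{\alpha\}$ and $\{\beta\}$, so $\alpha = \beta$; then $T - \alpha I = K_{2} - K_{1} = L_{2} - L_{1}$, and the point is that a splitting of a self-adjoint operator as a difference of two positive operators whose product vanishes is unique and coincides with the decomposition into positive and negative parts. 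Indeed $K_{1}K_{2} = 0$ forces $K_{2}K_{1} = 0$, hence $K_{1}$ and $K_{2}$ commute, and the range projection of $K_{1}$ commutes with $T - \alpha I$ and realizes its positive/negative part decomposition, so $K_{1} = (T - \alpha I)_{-}$, $K_{2} = (T - \alpha I)_{+}$, and likewise $L_{1} = (T - \alpha I)_{-}$, $L_{2} = (T - \alpha I)_{+}$.

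Modulo Theorem~\ref{positiveessentialspectrum}, the remaining ingredients are standard: that a self-adjoint operator with essential spectrum $\{0\}$ is compact, and the uniqueness of the positive/negative part decomposition. I expect the uniqueness clause to be the fussiest part, since the orthogonality hypothesis $K_{1}K_{2} = 0$ must be carefully exploited to pin $K_{1}$ and $K_{2}$ down; the rest is a direct application of the spectral theorem for (compact) self-adjoint operators.
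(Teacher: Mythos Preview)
This theorem is not proved in the present paper at all; it is quoted from \cite[Theorem~4.2]{RAMSSS} and used as a black box, so there is no in-paper argument to compare against.

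Your proof is mathematically correct, but there is a dependency issue worth flagging. You derive both implications from Theorem~\ref{positiveessentialspectrum}, which is \cite[Theorem~4.5]{RAMSSS}---a result appearing \emph{after} Theorem~4.2 in the same source paper and very likely proved there using Theorem~4.2 (the structural form $T=\alpha I - K_1 + K_2$ immediately yields $\sigma_{ess}(T)=\{\alpha\}$ via Weyl's theorem). If so, your argument for $(1)\Rightarrow(2)$ is circular. The direction $(2)\Rightarrow(1)$ and the uniqueness clause are unaffected: for $(2)\Rightarrow(1)$ you really only need Weyl's invariance of $\sigma_{ess}$ under compact perturbation, and your uniqueness argument via the positive/negative part decomposition of $T-\alpha I$ is clean and self-contained. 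But for $(1)\Rightarrow(2)$ the substantive step is precisely ``$T\in\overline{\mathcal{AN}(H)}_+ \Rightarrow \sigma_{ess}(T)$ is a singleton,'' and that cannot be replaced by Weyl's theorem alone---it requires the actual analysis of $\overline{\mathcal{AN}(H)}_+$ carried out in \cite{RAMSSS}. So as a standalone proof your proposal is incomplete at exactly the point where the real work lies; as a deduction within the present paper (where both cited theorems are already granted) it is a valid but somewhat redundant observation that the two imported characterizations are equivalent.
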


 From the structure of positive operators in $\overline{\mathcal{AN}(H)}_{+}$, we make a few observations.
 These observations are the generalizations of the results proved in \cite{VEN}.

\begin{prop}
	Let $T = \alpha I -K_1 +K_2$, where $K_1, K_2 \in \mathcal{K}(H)_{+}$ with $K_1K_2 = 0$ and $K_1 \leq \alpha I$. If $T \notin \mathcal{K}(H)$, then the following holds true.
	\begin{enumerate}
	\item\label{nullspacesubsetnullspace} $N(T) \subseteq N(K_2)$.
	\item\label{nullspacesubsetrange} If $x \in N(T)$, then $K_1x = \alpha x$. Hence $N(T) \subseteq R(K_1)$. Moreover in this case $\|K_1\| = \alpha$.
	\item \label{Tinjective}$T$ is not injective iff $\|K_1\| = \alpha$.
	\item \label{TFredholm}$T$ is Fredholm and $m_e(T)= \{\alpha\}$.
\end{enumerate}
\end{prop}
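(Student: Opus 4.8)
The plan is to reduce everything to a block-diagonal picture and then read the four claims off it. First I would note that since $K_1$ and $K_2$ are compact, the hypothesis $T \notin \mathcal{K}(H)$ forces $\alpha > 0$. Next, from $K_1, K_2 \geq 0$ and $K_1 K_2 = 0$ one gets $R(K_2) \subseteq N(K_1) = R(K_1)^{\perp}$ and, taking adjoints, $K_2 K_1 = 0$ as well; hence $\overline{R(K_1)} \perp \overline{R(K_2)}$ and we may write $H = \overline{R(K_1)} \oplus \overline{R(K_2)} \oplus \big(N(K_1) \cap N(K_2)\big)$. With respect to this decomposition $K_1 = A_1 \oplus 0 \oplus 0$ and $K_2 = 0 \oplus A_2 \oplus 0$, where $A_1, A_2$ are positive, injective, compact operators on the respective first two summands, so that
\[ T = (\alpha I - A_1) \oplus (\alpha I + A_2) \oplus \alpha I . \]
Since $K_1 \leq \alpha I$ gives $\alpha I - A_1 \geq 0$, in particular $T \geq 0$, hence $|T| = T$ --- a fact I will use in \eqref{TFredholm}.

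For \eqref{nullspacesubsetnullspace} and \eqref{nullspacesubsetrange}: if $x = x_1 \oplus x_2 \oplus x_0 \in N(T)$, the block form gives $(\alpha I + A_2)x_2 = 0$ and $\alpha x_0 = 0$; as $\alpha I + A_2 \geq \alpha I > 0$ and $\alpha > 0$, we get $x_2 = x_0 = 0$, so $x \in \overline{R(K_1)} \subseteq N(K_2)$, which is \eqref{nullspacesubsetnullspace}. The remaining block relation $(\alpha I - A_1)x_1 = 0$ reads $K_1 x = A_1 x_1 = \alpha x_1 = \alpha x$, hence $x = \alpha^{-1}K_1 x \in R(K_1)$, proving $N(T) \subseteq R(K_1)$; and if $x \neq 0$ then $\alpha$ is an eigenvalue of $K_1$, so $\|K_1\| \geq \alpha$, which together with $K_1 \leq \alpha I$ forces $\|K_1\| = \alpha$.

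Item \eqref{Tinjective} then follows in two lines: the forward direction is the last sentence above, and conversely, if $\|K_1\| = \alpha$ then (the norm of a positive compact operator being its largest eigenvalue) there is $x \neq 0$ with $K_1 x = \alpha x$; applying $K_2$ and using $K_2 K_1 = 0$ and $\alpha > 0$ gives $K_2 x = 0$, so $Tx = \alpha x - \alpha x + 0 = 0$ and $T$ is not injective. For \eqref{TFredholm}, since $T = \alpha I + (K_2 - K_1)$ with $\alpha \neq 0$, the image $\pi(T) = \alpha\,\pi(I)$ is invertible in the Calkin algebra, so $T$ is Fredholm by the definition of $\sigma_{ess}$; and since $|T| = T$ we have $\sigma_{ess}(|T|) = \sigma_{ess}(T) = \sigma(\pi(T)) = \{\alpha\}$, whence $m_e(T) = \alpha$.

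I do not anticipate a real obstacle; the only points requiring care are to dispose of the degenerate situation by establishing $\alpha > 0$ at the start, and to justify cleanly that $\overline{R(K_1)}$ and $\overline{R(K_2)}$ are orthogonal so that the block-diagonal forms of $K_1$, $K_2$, and $T$ are legitimate. Once the block form is in place, each of \eqref{nullspacesubsetnullspace}--\eqref{TFredholm} is an immediate consequence of it and of the standard spectral facts for positive compact operators.
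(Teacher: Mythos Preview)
Your proof is correct. The route differs from the paper's mainly in organization: you set up the three-block decomposition $H=\overline{R(K_1)}\oplus\overline{R(K_2)}\oplus(N(K_1)\cap N(K_2))$ once and read all four items off it, whereas the paper argues each item by direct computation (e.g.\ for \eqref{nullspacesubsetnullspace} it squares $K_2x=-\alpha x+K_1x$ to force $-\alpha\in\sigma_p(K_2)$ unless $K_2x=0$; for the converse in \eqref{Tinjective} it checks $T(K_1x)=0$ rather than $Tx=0$). Interestingly, the paper establishes essentially your block picture only in the \emph{next} proposition, so your approach front-loads that structure and gets a more unified argument. The sharpest difference is in \eqref{TFredholm}: the paper invokes an external result (\cite[Corollary~3.8]{RAMSSS}) to conclude $N(T)$ is finite-dimensional and $R(T)$ is closed, then applies the Fredholm criterion; your Calkin-algebra argument $\pi(T)=\alpha\pi(I)$ is more self-contained and immediately yields both Fredholmness and $\sigma_{ess}(T)=\{\alpha\}$ in one stroke.
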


\begin{proof}
Clearly $\alpha \neq 0$ as $T \notin \mathcal{K}(H)$.

	Proof of (\ref{nullspacesubsetnullspace}). If $x \in N(T)$, then $Tx = (\alpha I - K_1 + K_2)x = 0$. This implies $K_2x = - \alpha x + K_1x$. Since $K_1K_2 =0$, we get $K^2_2x = -\alpha K_2x$. If $K_2x \neq 0$, then $-\alpha \in \sigma_p(K_2)$, a contradiction to $K_2 \in \mathcal{K}(H)_+$. Hence $x \in N(K_2)$.
	
	Proof of (\ref{nullspacesubsetrange}). Let $x \in N(T)$. Then from (\ref{nullspacesubsetnullspace}), $K_2 x = 0$. Hence $K_1x = \alpha x$. This implies $N(T) \subseteq R(K_1)$ and $\|K_1\| = \alpha$.
	
	Proof of (\ref{Tinjective}). If $T$ is not injective, then from (\ref{nullspacesubsetrange}), $\|K_1\| = \alpha$.
	
	Conversely, let $\|K_1\| = \alpha$. Since $K_1 \in \mathcal{K}(H)$ is norm attaining, hence there exists $x \neq 0$ such that $K_1x = \alpha x$ \cite[Proposition 2.4]{CAR1}. As $K_2K_1 = 0$, we get
	\begin{equation}
	T(K_1x) = K_1(\alpha x - K_1x) + K_2 K_1 x = 0.
	\end{equation}
	Therefore $T$ is not injective.
	
	Proof of (\ref{TFredholm}) As $T \in \overline{\mathcal{AN}(H)}_+$, by \cite[Corollary 3.8]{RAMSSS}, $N(T)$ is finite dimensional and $R(T)$ is closed. As $T = T^*$, even $N(T^*)$ is finite dimensional. Therefore by Proposition \ref{Fredholmdef}, $T$ is Fredholm. Now, by the Weyl's theorem for the essential spectrum, we have $\sigma_{ess}(T)  = \{\alpha\}$. Hence $m_e(T) = \alpha$.	
\end{proof}
\begin{prop} \label{prop1}
	Let $T = \alpha I - K_1 + K_2$, where $\alpha \geq 0$, $K_1, K_2 \in \mathcal{K}(H)_{+}, K_1 \leq \alpha I$ and $K_1K_2 = 0$. Then the following result holds true.
\begin{enumerate}
	\item \label{rangereduces} $\overline{R(K_1)}$ reduces $T$.
	\item \label{positiveTmatrix}
	\[T =
	\begin{blockarray}{ccc}
	N(K_1) & N(K_1)^\bot\\
\begin{block}{(cc)c}
\alpha I_{N(K_1)} +	\widetilde{K_2} & 0 & N(K_1) \\
{}\\
	0 & \alpha I_{N(K_1)^{\bot}} -	\widetilde{K_1} & N(K_1)^\bot\\
	\end{block}
	\end{blockarray},\]
	where $\widetilde{K_2} = K_2|_{N(K_1)}$ and $\widetilde{K_1} = K_1|_{N(K_1)^{\bot}}$.
\end{enumerate}
\end{prop}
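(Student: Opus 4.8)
The plan is to exploit the self-adjointness of $K_1$ and $K_2$ together with the hypothesis $K_1K_2=0$ in order to split $H$ orthogonally and then read off the action of $T$ on each summand. Since $K_1\in\mathcal{K}(H)_{+}$ is self-adjoint, $\overline{R(K_1)}=N(K_1)^{\bot}$, so $H=N(K_1)\oplus N(K_1)^{\bot}$ is an orthogonal decomposition, and $K_1$ leaves both summands invariant. Moreover, taking adjoints in $K_1K_2=0$ yields $K_2K_1=(K_1K_2)^{*}=0$ as well, hence $R(K_2)\subseteq N(K_1)$ and $R(K_1)\subseteq N(K_2)$, so that $N(K_1)^{\bot}=\overline{R(K_1)}\subseteq N(K_2)$.

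For (\ref{rangereduces}) I would verify that $N(K_1)^{\bot}$ and its orthogonal complement $N(K_1)$ are both invariant under $T$. If $x\in N(K_1)^{\bot}=\overline{R(K_1)}$, then $K_2x=0$ (since $\overline{R(K_1)}\subseteq N(K_2)$), so $Tx=\alpha x-K_1x+K_2x=\alpha x-K_1x\in N(K_1)^{\bot}$, because $N(K_1)^{\bot}$ is a $K_1$-invariant subspace. If $x\in N(K_1)$, then $K_1x=0$, so $Tx=\alpha x+K_2x\in N(K_1)$, because $K_2x\in R(K_2)\subseteq N(K_1)$. Thus both $N(K_1)$ and $N(K_1)^{\bot}$ are $T$-invariant, i.e.\ $\overline{R(K_1)}$ reduces $T$.

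For (\ref{positiveTmatrix}) the block form then follows at once from the two computations above: with respect to $H=N(K_1)\oplus N(K_1)^{\bot}$ the off-diagonal blocks vanish by (\ref{rangereduces}), while the diagonal blocks are $T|_{N(K_1)}=\alpha I_{N(K_1)}+K_2|_{N(K_1)}=\alpha I_{N(K_1)}+\widetilde{K_2}$ and $T|_{N(K_1)^{\bot}}=\alpha I_{N(K_1)^{\bot}}-K_1|_{N(K_1)^{\bot}}=\alpha I_{N(K_1)^{\bot}}-\widetilde{K_1}$. One should only check that $\widetilde{K_2}=K_2|_{N(K_1)}$ genuinely maps $N(K_1)$ into itself (this is precisely $R(K_2)\subseteq N(K_1)$) and that $\widetilde{K_1}=K_1|_{N(K_1)^{\bot}}$ maps $N(K_1)^{\bot}$ into itself (which holds since $K_1$ reduces $N(K_1)^{\bot}$). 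There is no substantial obstacle here; the only point requiring care is the passage from $K_1K_2=0$ to $K_2K_1=0$ via self-adjointness and the identification $\overline{R(K_1)}=N(K_1)^{\bot}$, after which the statement is a direct substitution.
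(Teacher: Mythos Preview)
Your proof is correct and follows essentially the same approach as the paper: both arguments use $K_1K_2=0$ (and its adjoint $K_2K_1=0$) to show that $N(K_1)$ and $N(K_1)^{\bot}=\overline{R(K_1)}$ are invariant under $T$, and then read off the block decomposition. The only cosmetic difference is that the paper observes $T$ is positive (hence self-adjoint), so it suffices to check invariance of $\overline{R(K_1)}$ alone, whereas you verify invariance of both summands directly; this is harmless and arguably cleaner since it does not rely on identifying $T$ as positive.
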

\begin{proof}
	Proof of (\ref{rangereduces}). Since $T$ is positive, it is enough to prove that $R(K_1)$ is invariant under $T$.
	
	If $y \in R(K_1)$, then $y = K_1x$ for some $x \in H$. So
	\begin{equation*}
	Ty = (\alpha I - K_1 + K_2)K_1 x = K_1(\alpha I - K_1)x \in R(K_1).
	\end{equation*}
	Hence $\overline{R(K_1)}$ is invariant under $T$.

Proof of (\ref{positiveTmatrix}). Let $H = N(K_1) \oplus {N(K_1)}^{\bot}$. As $K_1K_2 =0$, we get $\overline{R(K_2)} \subseteq N(K_1)$. Therefore $N(K_1)$ is invariant under $K_2$. Since $ K_2 \in \mathcal{K}(H)_{+}$, $N(K_1)$ reduces $K_2$.

Also ${N(K_1)}^\bot = \overline{R(K_1)}$ reduces $K_1$ by (\ref{rangereduces}). Hence we get

\[K_1 =
\begin{blockarray}{ccc}
N(K_1) & N(K_1)^\bot\\
\begin{block}{(cc)c}
0 & 0 & N(K_1)\\
0 & \widetilde{K_1} & N(K_1)^\bot \\
\end{block}
\end{blockarray} \text{and }
K_2 =
\begin{blockarray}{ccc}
N(K_1) & N(K_1)^\bot\\
\begin{block}{(cc)c}
 \widetilde{K_2} & 0 & N(K_1)\\
0 & 0 & N(K_1)^\bot \\
\end{block}
\end{blockarray}.\] Therefore,
\begin{equation*}
\begin{split}
T & = \alpha I -K_1 +K_2\\ & =\begin{blockarray}{cc}
\begin{block}{(cc)}
\alpha I_{N(K_1)} +	\widetilde{K_2} & 0 \\
{}\\
0 & \alpha I_{N(K_1)^{\bot}} -	\widetilde{K_1} \\
\end{block}
\end{blockarray} \ .
\end{split}
\end{equation*}
 Note that if $\overline{R(K_1)} = H$, then $T = \alpha I -\widetilde{K_1}$.
\end{proof}

In \cite{VEN}, it is proved that if $T \in \mathcal{AN}(H)$ is invertible, then $ T^{-1} \in \mathcal{AM}(H)$. Next we prove that the inverse of an operator in $\overline{\mathcal{AN}(H)}$, also belongs to $\overline{\mathcal{AN}(H)}$.

\begin{theorem}
	If $T \in \overline{\mathcal{AN}(H)}$ be invertible, then $T^{-1} \in \overline{\mathcal{AN}(H)}$.
\end{theorem}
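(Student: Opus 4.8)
The plan is to use the polar decomposition $T = W|T|$ together with the characterization of positive operators in $\overline{\mathcal{AN}(H)}$ (Theorem \ref{positivechar}). Since $T$ is invertible, $W$ is unitary and $|T| = (T^*T)^{1/2}$ is a positive invertible operator. First I would observe that $|T| \in \overline{\mathcal{AN}(H)}$: indeed $T \in \overline{\mathcal{AN}(H)}$ and, by the results of \cite{RAMSSS} relating the two closures (and the fact that the $\mathcal{AN}$ and $\mathcal{AM}$ closures coincide and are closed under taking moduli), $|T|$ lies in $\overline{\mathcal{AN}(H)}_{+}$. Hence by Theorem \ref{positivechar} we may write $|T| = \alpha I - K_1 + K_2$ with $\alpha \geq 0$, $K_1, K_2 \in \mathcal{K}(H)_{+}$, $K_1 K_2 = 0$ and $K_1 \leq \alpha I$.

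Next I would use invertibility of $|T|$ to pin down the structure. Since $|T|$ is invertible and not compact, $\alpha \neq 0$, and by Proposition \ref{prop1}(\ref{positiveTmatrix}) we have the block form $|T| = (\alpha I_{N(K_1)} + \widetilde{K_2}) \oplus (\alpha I_{N(K_1)^{\bot}} - \widetilde{K_1})$ on $H = N(K_1) \oplus N(K_1)^{\bot}$. Invertibility forces $\alpha I_{N(K_1)^{\bot}} - \widetilde{K_1}$ to be bounded below, so $\alpha \notin \sigma_p(\widetilde{K_1})$ and, since $\widetilde{K_1}$ is compact and positive with $\widetilde{K_1} \leq \alpha I$, we get $\|\widetilde{K_1}\| < \alpha$, i.e. $\alpha \notin \sigma(K_1)$ entirely away from the top; similarly $\alpha I_{N(K_1)} + \widetilde{K_2}$ is automatically invertible. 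One then computes $|T|^{-1}$ blockwise: on $N(K_1)$ it is $(\alpha I + \widetilde{K_2})^{-1} = \alpha^{-1} I - \alpha^{-1}\widetilde{K_2}(\alpha I + \widetilde{K_2})^{-1}$, and on $N(K_1)^{\bot}$ it is $(\alpha I - \widetilde{K_1})^{-1} = \alpha^{-1} I + \alpha^{-1}\widetilde{K_1}(\alpha I - \widetilde{K_1})^{-1}$. The key point is that $\widetilde{K_2}(\alpha I + \widetilde{K_2})^{-1}$ and $\widetilde{K_1}(\alpha I - \widetilde{K_1})^{-1}$ are compact positive operators (compact times bounded), they have orthogonal ranges (sitting in $N(K_1)$ and $N(K_1)^{\bot}$ respectively), and the first is $\leq \alpha^{-1} I$ on its block. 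Reassembling, $|T|^{-1} = \alpha^{-1} I - L_1 + L_2$ with $L_1, L_2 \in \mathcal{K}(H)_{+}$, $L_1 L_2 = 0$, $L_1 \leq \alpha^{-1} I$, so by Theorem \ref{positivechar} again, $|T|^{-1} \in \overline{\mathcal{AN}(H)}_{+}$.

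Finally, from $T = W|T|$ with $W$ unitary, $T^{-1} = |T|^{-1} W^{-1} = |T|^{-1} W^*$. Since $\overline{\mathcal{AN}(H)}$ is invariant under multiplication by unitaries (an easy check: unitaries are $\mathcal{AN}$, and the approximating sequence $A_n \to |T|^{-1}$ in operator norm with $A_n \in \mathcal{AN}(H)$ gives $A_n W^* \to |T|^{-1} W^*$, and products of an $\mathcal{AN}$-operator with a unitary are $\mathcal{AN}$ — this uses that $\mathcal{AN}$-operators are closed under composition with invertible $\mathcal{AN}$-operators, or more directly that $A_n W^*$ has the same singular value structure as $A_n$), we conclude $T^{-1} \in \overline{\mathcal{AN}(H)}$.

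The main obstacle I expect is the bookkeeping in the second paragraph: verifying carefully that the corrections $\widetilde{K_i}(\alpha I \mp \widetilde{K_i})^{-1}$ are genuinely compact, positive, mutually range-orthogonal, and satisfy the required norm bound, so that the canonical representation of Theorem \ref{positivechar} applies to $|T|^{-1}$. Everything else — the polar decomposition step and the unitary-invariance of $\overline{\mathcal{AN}(H)}$ — should be routine, though the latter may warrant citing the relevant stability result from \cite{RAMSSS} or proving it as a short lemma.
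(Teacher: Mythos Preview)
Your proposal is correct and follows the same overall strategy as the paper: take the polar decomposition $T = W|T|$ with $W$ unitary, show $|T|^{-1} \in \overline{\mathcal{AN}(H)}_+$, and then conclude $T^{-1} = |T|^{-1}W^* \in \overline{\mathcal{AN}(H)}$ using stability under unitary multiplication. The paper's middle step is a bit more economical than your blockwise computation: it writes $|T|^{-1} = \alpha^{-1}I + \alpha^{-1}(K_1 - K_2)|T|^{-1} =: \alpha^{-1}I + K_3$ directly as a scalar plus a compact self-adjoint operator and invokes \cite[Proposition~3.5]{RAMSSS} (which places such operators in $\overline{\mathcal{AN}(H)}$), thereby bypassing the need to reconstruct the full canonical form of Theorem~\ref{positivechar} for $|T|^{-1}$.
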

\begin{proof}
	Let $T= U|T|$ be the polar decomposition of $T$. As $T$ is invertible, we have $U$ is unitary (\cite[Corollary 5.91, Page 405]{KUB}) and $|T|$ is invertible.
	
 By \cite[Lemma 3.13]{RAMSSS}, we have $|T| \in \overline{\mathcal{AN}(H)}_{+}$. Hence from Theorem \ref{positivechar}, we get $|T| = \alpha I - K_1 + K_2$, where $\alpha \geq 0, K_1, K_2 \in \mathcal{K}(H)_{+}$ with $K_1K_2 = 0$ and $K_1 \leq \alpha I$. Clearly $\alpha \neq 0$, as $|T|$ is invertible.

 Now,
 \begin{align*}
|T|^{-1}& = {\alpha}^{-1}I - \big({\alpha}^{-1}I - (\alpha I - K_1 + K_2)^{-1} \big),\\&= {\alpha}^{-1}I - \big({\alpha}^{-1}(K_2 -K_1)(\alpha I - K_1 + K_2)^{-1} \big), \\&= {\alpha}^{-1}I +K_3,
 \end{align*}
 where $K_3 = {\alpha}^{-1}(K_1 -K_2)(\alpha I - K_1 + K_2)^{-1} \in \mathcal{K}(H)$. Hence by \cite[Proposition 3.5]{RAMSSS}, $|T|^{-1} \in \overline{\mathcal{AN}(H)}_{+}$.

As $U^*$ unitary, $U^* \in \overline{\mathcal{AN}(H)}$. Therefore by \cite[Theorem 3.11]{RAMSSS}, we conclude that $T^{-1} = |T|^{-1}U^{*} \in \overline{\mathcal{AN}(H)}$.
\end{proof}

\begin{remark}
	If $T \in \overline{\mathcal{AN}(H)}$ is self-adjoint and $\lambda$ is a purely imaginary number, then $T \pm \lambda I \in \overline{\mathcal{AN}(H)}$.
	\begin{proof}
		As $T \in \overline{\mathcal{AN}(H)}$ is self-adjoint, there exists a sequence of self-adjoint operators $\{T_n\} \subseteq \mathcal{AN}(H)$ such that $T_n \to T$ in the operator norm. So, $T_n \pm \lambda I \to T \pm \lambda I$ as $n \to \infty$ and by \cite[Theorem 2.12]{VEN}, we have $T_n \pm \lambda I \in \mathcal{AN}(H)$. Hence $T \pm \lambda I \in \overline{\mathcal{AN}(H)}$.
	\end{proof}
\end{remark}

\section{Quasinormal operators in $\overline{\mathcal{AN}(H)}$}

In this section, we first look at a representation of absolutely norm attaining quasinormal operators by generalizing the results of normal $\mathcal{AN}$-operators in \cite{RAM2}.

\begin{lemma}\cite[Proposition 3.8]{RAM4}\label{quasinormalreducingsubspace}
	Let $T \in \mathcal{B}(H)$ be quasinormal and $T = W|T|$ be the polar decomposition of $T$. If $\lambda \in \sigma_{p}(|T|)$, then $N(|T|- \lambda I)$ reduces $W$ and hence reduces $T$.
\end{lemma}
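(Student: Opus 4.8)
The plan is to deduce from quasinormality the single operator identity $W|T| = |T|W$, which is the classical characterization of quasinormal operators in terms of the polar decomposition; once this is available, together with its adjoint $W^*|T| = |T|W^*$, the reducing property of each eigenspace of $|T|$ is immediate.

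\emph{Step 1 (the commutation identity).} Since $T$ is quasinormal, $T$ commutes with $T^*T = |T|^2$. The function $t \mapsto \sqrt{t}$ belongs to $C(\sigma(|T|^2))$ and is a uniform limit of polynomials there, so by the continuous functional calculus $|T| = (|T|^2)^{1/2}$ lies in the norm closure of the polynomials in $|T|^2$; hence $T$ commutes with $|T|$, i.e.\ $T|T| = |T|T$. Substituting $T = W|T|$ yields $W|T|^2 = |T|W|T|$, that is $\bigl(W|T| - |T|W\bigr)|T| = 0$, so $\overline{R(|T|)} \subseteq N\bigl(W|T| - |T|W\bigr)$. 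On the complementary subspace, for $x \in N(|T|) = N(T) = N(W)$ we have $\bigl(W|T| - |T|W\bigr)x = -|T|Wx = 0$. Since $|T|$ is self-adjoint, $H = \overline{R(|T|)} \oplus N(|T|)$, and we conclude $W|T| = |T|W$; taking adjoints gives $W^*|T| = |T|W^*$.

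\emph{Step 2 (reducing the eigenspaces).} Fix $\lambda \in \sigma_p(|T|)$ and put $M = N(|T| - \lambda I)$, a closed subspace. For $x \in M$, Step 1 gives $|T|(Wx) = W|T|x = \lambda Wx$ and $|T|(W^*x) = W^*|T|x = \lambda W^*x$, so $Wx, W^*x \in M$; hence $M$ is invariant under both $W$ and $W^*$, i.e.\ $M$ reduces $W$. As $M$ is an eigenspace of the self-adjoint operator $|T|$ it also reduces $|T|$, and since $T = W|T|$ is the product of two operators each reduced by $M$, the subspace $M$ reduces $T$.

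I do not expect a genuine obstacle: the only delicate points are the passage from ``$T$ commutes with $|T|^2$'' to ``$T$ commutes with $|T|$'' (handled by functional calculus) and the treatment of $N(|T|)$ in the cancellation step (handled by recalling $N(W) = N(|T|)$, which in particular covers the case $\lambda = 0$). If one prefers, the identity $W|T| = |T|W$ for quasinormal $T$ can simply be invoked from standard references such as \cite{FUR}.
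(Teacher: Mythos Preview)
Your argument is correct. The paper itself does not prove this lemma but cites it from \cite[Proposition 3.8]{RAM4}; the commutation identity $W|T|=|T|W$ that you derive is precisely the tool the paper invokes later (citing \cite[Proposition 6.4, Page 444]{KUB}) when handling the quasinormal case in Theorem~\ref{ANquasinormal}, so your route is the standard one and matches the intended reasoning.
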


\begin{theorem}\label{ANquasinormal}
	Let $T \in \mathcal{AN}(H)$ be quasinormal. Then there exists reducing subspaces $H_\infty, H_1$ and $H_2$ with $\text{dim}(H_2) < \infty$ such that $H = H_1 \oplus  H_\infty  \oplus H_2$ and with respect to this decomposition $T$ can be written as, \begin{equation}
	T = \displaystyle \bigoplus_{i = 1}^{n} \alpha_i U_i \bigoplus \alpha V_\infty \bigoplus_{j = 1}^{m} \beta_jV_j,
	\end{equation} for some $n \in \mathbb{N} \cup \{\infty\}$, $m \in \mathbb{N}$, where
	\begin{enumerate}
		\item [(i)]  $\sigma_{ess}(|T|) = \{\alpha\}, \alpha \geq 0$, $H_\infty = N(|T| - \alpha I)$  and $V_\infty \in \mathcal{B}(H_\infty)$ is an isometry.
		
		\item [(ii)] $\{\alpha_i\}_{i =1}^{n} = (\alpha, \|T\|] \cap \sigma(|T|)$, $H_1 = \displaystyle \bigoplus_{i = 1}^{n} N(|T| - \alpha_i I)$ and $U_i \in \mathcal{B}(N(|T| - \alpha_iI))$ is a unitary operator for all $i =1,2, \dots, n$, $n \in \mathbb{N} \cup \{\infty\}$.
		\item [(iii)] $ \{\beta_j\}_{j =1}^{m} = [m(T), \alpha) \cap \sigma(|T|)$, $H_2 = \displaystyle \bigoplus_{j = 1}^{m} N(|T| - \beta_j I)$ and $V_j \in \mathcal{B}(N(|T| - \beta_jI))$ is a unitary operator for all $j =1,2, \dots, m$, $m \in \mathbb{N}$.
\end{enumerate}
\end{theorem}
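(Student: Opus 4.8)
The plan is to read off the required decomposition from the spectral structure of the positive operator $|T|$, using quasinormality only to guarantee that the eigenspaces of $|T|$ actually reduce $T$.

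First I would record the structure of $|T|$. Since $T \in \mathcal{AN}(H)$, we have $|T| \in \mathcal{AN}(H)_{+} \subseteq \overline{\mathcal{AN}(H)}_{+}$, so by Theorem \ref{positiveessentialspectrum} there is a unique $\alpha \geq 0$ with $\sigma_{ess}(|T|) = \{\alpha\}$, and by Theorem \ref{positivechar} the operator $|T| - \alpha I$ is compact and self-adjoint. By the spectral theorem for compact self-adjoint operators, every point of $\sigma(|T|) \setminus \{\alpha\}$ is then an eigenvalue of finite multiplicity, such eigenvalues can accumulate only at $\alpha$, and
\[ H \;=\; N(|T| - \alpha I)\ \oplus\ \bigoplus_{\lambda \in \sigma(|T|),\ \lambda \neq \alpha} N(|T| - \lambda I). \]
Two further facts fix the labelling: an $\mathcal{AN}$-operator attains its norm, so $\|T\| = \||T|\| = \max \sigma(|T|) \in \sigma_{p}(|T|)$; and $\sigma(|T|) \cap [0,\alpha)$ is finite (this is where $\dim H_2 < \infty$ comes from) --- otherwise $|T|$ would have infinitely many eigenvalues below $\alpha$, which by $\sigma_{ess}(|T|) = \{\alpha\}$ could accumulate only at $\alpha$, and restricting $|T|$ to the closed span of eigenvectors for such a sequence would produce a diagonal operator of norm $\alpha$ that attains nothing, contradicting $|T| \in \mathcal{AN}(H)$ (alternatively, cite the structure theorem for positive $\mathcal{AN}$-operators from \cite{RAM2}). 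Setting $\{\alpha_i\}_{i=1}^{n} = (\alpha, \|T\|] \cap \sigma(|T|)$, $\{\beta_j\}_{j=1}^{m} = [m(T), \alpha) \cap \sigma(|T|)$, and $H_\infty = N(|T| - \alpha I)$, $H_1 = \bigoplus_{i} N(|T| - \alpha_i I)$, $H_2 = \bigoplus_{j} N(|T| - \beta_j I)$, the display becomes $H = H_1 \oplus H_\infty \oplus H_2$; here $H_\infty$ is $\{0\}$ or infinite dimensional, since a finite-multiplicity eigenvalue of a self-adjoint operator cannot be essential.

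Next I would invoke quasinormality. Since $T$ commutes with $T^{*}T = |T|^{2}$, it commutes with $|T|$, so by Lemma \ref{quasinormalreducingsubspace} each $N(|T| - \lambda I)$ with $\lambda \in \sigma_{p}(|T|)$ reduces the partial isometry $W$ in the polar decomposition $T = W|T|$ and reduces $T$; hence $H_1, H_\infty, H_2$ all reduce $T$, and on $N(|T| - \lambda I)$ we have $T = \lambda\, W|_{N(|T| - \lambda I)}$. For $\lambda \neq 0$, $N(|T| - \lambda I)$ is orthogonal to $N(|T|) = N(T)$, hence lies in the initial space $N(|T|)^{\perp}$ of $W$, so $W$ restricts there to an isometry; being also reducing, $W|_{N(|T| - \lambda I)}$ is an isometry of $N(|T| - \lambda I)$ into itself. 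When $\lambda = \alpha_i$ or $\lambda = \beta_j$, this space is finite dimensional, so that isometry is surjective, hence unitary; put $U_i := W|_{N(|T| - \alpha_i I)}$ and $V_j := W|_{N(|T| - \beta_j I)}$. On $H_\infty$: if $\alpha \neq 0$ put $V_\infty := W|_{H_\infty}$, an isometry (not necessarily onto --- the unilateral shift is such a $T$); if $\alpha = 0$, then $H_\infty = N(T)$ and $T|_{H_\infty} = 0 = \alpha V_\infty$ for any isometry, so take $V_\infty = I_{H_\infty}$.

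Assembling these on $H = H_1 \oplus H_\infty \oplus H_2$ yields $T = \bigoplus_{i=1}^{n} \alpha_i U_i \oplus \alpha V_\infty \oplus \bigoplus_{j=1}^{m} \beta_j V_j$, with (i), (ii), (iii) recording respectively the essential part, the part of $\sigma(|T|)$ above $\alpha$, and the finite-dimensional part below $\alpha$. I expect the main friction to be organising the spectral structure of $|T|$ exactly as needed --- in particular the compactness of $|T| - \alpha I$ and the finiteness of $\sigma(|T|) \cap [0,\alpha)$ --- since these are what force $H_1$ and $H_2$ to split into finite-dimensional eigenspaces and thus allow ``isometry of a finite-dimensional space into itself'' to be upgraded to ``unitary''. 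The one indispensably quasinormal input is Lemma \ref{quasinormalreducingsubspace}, which is what makes the eigenspaces of $|T|$ invariant under $T^{*}$ as well as $T$; without it the eigenspaces need not reduce $T$ and the argument collapses.
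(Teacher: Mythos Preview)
Your approach is essentially the paper's: use the spectral structure of $|T|$ coming from $|T|\in\mathcal{AN}(H)_+$ to decompose $H$ into eigenspaces, invoke Lemma~\ref{quasinormalreducingsubspace} so that these eigenspaces reduce $W$ (and hence $T$), and then observe that $W$ restricts to an isometry on each nonzero-eigenvalue eigenspace, which is unitary on the finite-dimensional ones. The paper organises the endgame via a case analysis on how $\alpha$ sits in $\sigma(|T|)$ and disposes of $\alpha=0$ by quoting ``compact hyponormal $\Rightarrow$ normal'' together with the known normal $\mathcal{AN}$ representation, whereas you handle $\alpha=0$ directly by setting $V_\infty=I_{H_\infty}$; both are fine.

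One slip to correct: your parenthetical ``$H_\infty$ is $\{0\}$ or infinite dimensional, since a finite-multiplicity eigenvalue of a self-adjoint operator cannot be essential'' is false as stated. By Theorem~\ref{ess spectrum of self-adjoint}, $\alpha$ can lie in $\sigma_{ess}(|T|)$ merely by being a limit point of $\sigma_p(|T|)$, while simultaneously being an eigenvalue of finite multiplicity; the paper's Case~(4) treats exactly this situation, where $H_\infty=N(|T|-\alpha I)$ is nonzero and finite dimensional (and then $V_\infty$ is in fact unitary). This does not damage your argument, since you only need $V_\infty$ to be an isometry and you never use the claimed dichotomy elsewhere, but the sentence should be removed or amended.
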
\begin{proof}
	Let $T = W|T|$ be the polar decomposition of $T$. Since $|T| \in \mathcal{AN}(H)$, we have $\sigma_{ess}(|T|)$ is a singleton set say, $\{\alpha\}$ and $[m(T), \alpha) \cap \sigma(|T|)$ is a finite set \cite[Theorem 2.4]{RAM2}.
	
	If $\alpha = 0$, then $T$ is compact and it is well known that, every quasinormal operator is hyponormal \cite[Theorem 1, Page 104]{FUR}. Therefore by Theorem \ref{compacthypo}, we get that $T$ is normal. Hence the required structure of $T$ is obtained by \cite[Theorem 3.9]{RAM2}.
	
	So we assume that $\alpha \neq 0$. Let  $[m(T), \alpha) \cap \sigma(|T|) = \{\beta_j\}_{j=1}^{m}, m \in \mathbb{N}$ and $(\alpha, \|T\|] \cap \sigma(|T|) = \{\alpha_i\}_{i=1}^{n}, n \in \mathbb{N} \cup \{\infty\}$.
	
	\hspace{1cm}
	\begin{tikzpicture}
	\draw[orange, very thick](5,-.3)--(5,.3);
	\filldraw(5,-.3)  ;
	\draw[very thick]  (-5,0)--(5,0);
	\filldraw [gray] (0,0) circle (2.5pt);
	\filldraw(0,-.3)  node[anchor=north] {$\alpha$};
	\draw [orange, very thick](-5,-.3)--(-5,.3);
	\filldraw(-5,-.3) node[anchor=east] {$\beta_1$};
	\draw (-4,-.2)--(-4,.2) node[anchor=south] {$\beta_2$};
	\filldraw(-2.7,-.3)  node[anchor=north] {Finitely many};
	\draw(5,-.3)  node[anchor=west] {$\alpha_1$};
	\draw (-2.2,-.2)--(-2.2,.2) node[anchor=south] {.};
	\draw (-3,-.2)--(-3,.2) node[anchor=south] {.};
	\draw (-3.5,-.2)--(-3.5,.2) node[anchor=south] {$\beta_3$};
	\draw (-2,-.2)--(-2,.2)node[anchor=south] {.};
	\draw (-1,-.2)--(-1,.2) node[anchor=south] {$\beta_{{m-1}}$};
	\draw (-.3,-.2)--(-.3,.2) node[anchor=south] {$\beta_{m}$};
	\draw (.1,-.2)--(.1,.2) ;
	
	\draw (.10,-.2)--(.10,.2);
	\draw (.15,-.2)--(.15,.2);
	\draw (.21,-.2)--(.21,.2);
	\draw (.28,-.2)--(.28,.2);
	\draw (.37,-.2)--(.37,.2);
	\draw (.47,-.2)--(.47,.2);	
	\draw (.58,-.2)--(.58,.2);
	\draw (.7,-.2)--(.7,.2);
	\draw (.83,-.2)--(.83,.2) node[anchor=south] {$.$};
	\draw (.97,-.2)--(.97,.2) node[anchor=south] {$.$};
	\draw (1.12,-.2)--(1.12,.2)node[anchor=south] {$.$};	
	\draw (1.28,-.2)--(1.28,.2) node[anchor=south] {$.$};
	\draw (1.45,-.2)--(1.45,.2) node[anchor=south] {$.$};
	\draw (1.63,-.2)--(1.63,.2) node[anchor=south] {$.$};
	\draw (1.82,-.2)--(1.82,.2) node[anchor=south] {$.$};
	\draw (2.02,-.2)--(2.02,.2) node[anchor=south] {$\alpha_5$};
	\draw (2.5,-.2)--(2.5,.2) node[anchor=south] {$\alpha_4$};
	\draw (3.12,-.2)--(3.12,.2) node[anchor=south] {$\alpha_3$};
	\draw (3.9,-.2)--(3.9,.2) node[anchor=south] {$\alpha_2$};
	\end{tikzpicture}
	
	\hspace{4cm} Spectral Diagram of $|T| \in \mathcal{AN}(H)$
	
	\vspace{0.5cm}Define $M_i := N(|T| - \alpha_i I),\ i =1,2,\dots,n$ for $n \in \mathbb{N} \cup \{\infty\}$, $H_\infty := N(|T| - \alpha I)$,  $N_j := N(|T| - \beta_j I)$, $j =1,2,\dots,m$ for $m \in \mathbb{N}$.
	
	Then by Lemma \ref{quasinormalreducingsubspace}, we have each $M_i, N_j$ and $H_\infty$ reduces $W$ as well as $T$.
	Let $U_i = W|_{M_i} $, $V_j = W|_{N_j}$ and $V_\infty = W|_{H_\infty}$.
	
	If $\alpha_i \neq 0$ or $\beta_j \neq 0$, then the corresponding $M_i \subset N(W)^\perp$ and $N_j \subset N(W)^\perp$. Hence $U_i$ and $V_j$ are isometries on $M_i$ and $N_j$, respectively. Since $\alpha_i, \beta_j \in \pi_{00}(|T|)$, we get $U_i \in \mathcal{B}(M_i)$ and $V_j \in \mathcal{B}(N_j)$ are unitary operators.	
	
	If $\beta_{j_0} = 0$ for some $j_0 \in \{1,2,\dots,m\}$, then $N_{j_0} = N(T)$ and hence $V_{j_0} =I|_{N(T)}$.
	
	Let $H_1 := \displaystyle \bigoplus_{i = 1}^n M_i$, $n \in \mathbb{N} \cup \{\infty\}$ and $H_2 := \displaystyle \bigoplus_{j = 1}^m N_j$, $m \in \mathbb{N}$. Clearly $H_2$ is finite dimensional. Then $T|_{H_1} = \displaystyle \bigoplus_{i = 1}^n \alpha_i U_i$ and $T|_{H_2} = \displaystyle \bigoplus_{j = 1}^m \beta_j V_j$. Since $|T| = \displaystyle \bigoplus_{\alpha \in \sigma(|T|)} \alpha I_{\alpha}$, where $I_{\alpha}$ is the identity operator on $N(|T|-\alpha I)$, we get $H = H_1 \oplus H_\infty \oplus H_2$.
	
	We have the following cases.
	
	Case$(1)$ $\alpha$ is an eigenvalue of $\sigma(|T|)$ with infinite multiplicity but not the limit point of $\sigma(|T|)$.\\
	Since $\alpha \neq 0$, we get $H_\infty \subset N(W)^\bot$. Hence $V_\infty \in \mathcal{B}(H_\infty)$ is an isometry. So we get $T =  \displaystyle \bigoplus_{i = 1}^{n} \alpha_i U_i \bigoplus \alpha V_\infty \bigoplus_{j = 1}^{m} \beta_jV_j $, $n \in \mathbb{N}$, $m \in \mathbb{N}$,
	
	Case$(2)$ $\alpha$ is the limit point of $\sigma(|T|)$ and $\alpha \notin \sigma_{p}(|T|)$.\\ Then $H_\infty =0$, so $V_\infty =0$ and in this case, we get $T =  \displaystyle \bigoplus_{i = 1}^{\infty} \alpha_i U_i  \bigoplus_{j = 1}^{m} \beta_jV_j $, $m \in \mathbb{N}$.
	
	Case$(3)$ $\alpha$ is both the limit point of $\sigma(|T|)$ as well as an eigenvalue with infinite multiplicity.\\ Then we get $T = \displaystyle \bigoplus_{i = 1}^{\infty} \alpha_i U_i \bigoplus \alpha V_\infty \bigoplus_{j = 1}^{m} \beta_jV_j$,  $m \in \mathbb{N}$.
	
	Case$(4)$ $\alpha$ is the limit point of $\sigma(|T|)$ and an eigenvalue with finite multiplicity.\\ Then a representation of $T$ can be obtained similar to Case $(3)$. As $H_\infty$ is finite dimensional, we get $V_\infty \in \mathcal{B}(H_\infty)$ is a unitary operator.
	
	Moreover, if $m(T) = \alpha = \|T\|$, then  $T = \alpha V_\infty$.
\end{proof}

\begin{corollary}
	Let $T \in \mathcal{AN}(H)$ be quasinormal and $\alpha \in \sigma_{ess}(|T|)$. If any one of the following conditions hold, then $T$ is normal.
	\begin{enumerate}
		\item $\alpha \notin \sigma_{p}(|T|)$.
		\item $\alpha \in \sigma_{p}(|T|)$ with $N(|T| -\alpha I)$ is finite dimensional.
	\end{enumerate}
\end{corollary}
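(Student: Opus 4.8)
The plan is to read this off directly from the structure theorem, Theorem~\ref{ANquasinormal}. First I would dispose of the degenerate case: write $T = W|T|$ for the polar decomposition and let $\{\alpha\} = \sigma_{ess}(|T|)$. If $\alpha = 0$, then $|T|$ is compact, hence $T$ is compact; since every quasinormal operator is hyponormal, Theorem~\ref{compacthypo} already forces $T$ to be normal and there is nothing more to prove. So assume $\alpha \neq 0$ and apply Theorem~\ref{ANquasinormal} to write
\[
T \;=\; \bigoplus_{i=1}^{n}\alpha_i U_i \;\bigoplus\; \alpha V_\infty \;\bigoplus_{j=1}^{m}\beta_j V_j
\]
on $H = H_1 \oplus H_\infty \oplus H_2$, where $H_\infty = N(|T|-\alpha I)$, each $U_i$ and each $V_j$ is unitary on its summand, and $V_\infty \in \mathcal{B}(H_\infty)$ is an isometry.

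The observation driving the argument is that $\alpha V_\infty$ is the only summand that can fail to be normal. Indeed, for a nonnegative real $c$ and a unitary $U$ one has $(cU)^*(cU) = c^2 I = (cU)(cU)^*$, so every $\alpha_i U_i$ and every $\beta_j V_j$ is normal; and since a (bounded) orthogonal direct sum of normal operators is again normal, $T$ will be normal as soon as $V_\infty$ is normal. Thus it suffices to check that each of the two hypotheses makes $V_\infty$ normal.

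Under hypothesis (1), $\alpha \notin \sigma_p(|T|)$ means $H_\infty = N(|T|-\alpha I) = \{0\}$, so $V_\infty = 0$, which is trivially normal. Under hypothesis (2), $H_\infty$ is finite dimensional, so the isometry $V_\infty : H_\infty \to H_\infty$ is automatically surjective, hence unitary, and $\alpha V_\infty$ is once more a nonnegative scalar times a unitary, hence normal. In both cases every summand of $T$ is normal, so $T$ is normal. I do not expect a real obstacle here: all the substance sits in Theorem~\ref{ANquasinormal}, and the only points requiring any care are the routine verifications that $\alpha_i U_i$, $\beta_j V_j$, and (in case (2)) $\alpha V_\infty$ are normal and that normality passes to the possibly infinite direct sum.
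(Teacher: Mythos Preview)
Your proof is correct and follows essentially the same route as the paper: the paper simply points to Cases~(2) and~(4) in the proof of Theorem~\ref{ANquasinormal}, which are exactly your observations that $H_\infty=\{0\}$ under hypothesis~(1) and that the isometry $V_\infty$ on a finite-dimensional $H_\infty$ must be unitary under hypothesis~(2). Your treatment is slightly more explicit (spelling out the $\alpha=0$ case and the passage of normality to direct sums), but there is no substantive difference.
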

\begin{proof}
	Proof follows from Case$(2)$ and Case$(4)$ of Theorem \ref{ANquasinormal}.
\end{proof}

In \cite{BALA1}, a representation of normal $\mathcal{AM}$ operators was studied. Now, we give a representation of quasinormal $\mathcal{AM}$-operators. The following theorem generalizes the result of \cite{BALA1}.

\begin{theorem}\label{AMquasinormal}
	Let $T \in \mathcal{AM}(H)$ be quasinormal. Then there exists reducing subspaces $H_\infty, H_1$ and $H_2$ with $\text{dim}(H_1) < \infty$ such that $H = H_1 \oplus  H_\infty  \oplus H_2$ and \begin{equation}
	T = \displaystyle \bigoplus_{i = 1}^{n} \alpha_i U_i \bigoplus \alpha V_\infty \bigoplus_{j = 1}^{m} \beta_jV_j
	\end{equation} for some $n \in \mathbb{N}$, $m \in \mathbb{N}  \cup \{\infty\}$, where
	\begin{enumerate}
		\item [(i)]  $\sigma_{ess}(|T|) = \{\alpha\}, \alpha \geq 0$, $H_\infty = N(|T| - \alpha I)$  and $V_\infty \in \mathcal{B}(H_\infty)$ is an isometry.
		
		\item [(ii)] $\{\alpha_i\}_{i =1}^{n} = (\alpha, \|T\|] \cap \sigma(|T|)$, $H_1 = \displaystyle \bigoplus_{i = 1}^{n} N(|T| - \alpha_i I)$ and $U_i \in \mathcal{B}(N(|T| - \alpha_iI))$ is a unitary operator for all $i =1,2, \dots, n$, $n \in \mathbb{N}$.
		\item [(iii)] $ \{\beta_j\}_{j =1}^{m} = [m(T), \alpha) \cap \sigma(|T|)$, $H_2 = \displaystyle \bigoplus_{j = 1}^{m} N(|T| - \beta_j I)$ and $V_j \in \mathcal{B}(N(|T| - \beta_jI))$ is a unitary operator for all $j =1,2, \dots, m$, $m \in \mathbb{N} \cup \{\infty\}$.
		
	\end{enumerate}
\end{theorem}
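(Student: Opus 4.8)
The plan is to follow the proof of Theorem \ref{ANquasinormal} almost line for line, replacing the spectral structure of positive $\mathcal{AN}$-operators by the ``reflected'' structure of positive $\mathcal{AM}$-operators; this is precisely what interchanges the roles of $H_1$ and $H_2$, so that now $\dim H_1<\infty$ instead of $\dim H_2<\infty$.

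First I would pass to the polar decomposition $T=W|T|$. Since $\|Tx\|=\||T|x\|$ for every $x\in H$, the restriction $T|_M$ is minimum attaining exactly when the restriction of $|T|$ to $M$ is, with the same minimizing vector, so $|T|\in\mathcal{AM}(H)_{+}$ (see also \cite{CAR2}). The key input is the description of positive $\mathcal{AM}$-operators, the $\mathcal{AM}$-analogue of \cite[Theorem 2.4]{RAM2} (cf.\ \cite{CAR2,BALA1}): $\sigma_{ess}(|T|)$ is a singleton, say $\{\alpha\}$ with $\alpha\ge 0$; the set $(\alpha,\|T\|]\cap\sigma(|T|)$ is \emph{finite}; and every point of $[m(T),\alpha)\cap\sigma(|T|)$ is an isolated eigenvalue of finite multiplicity, $\alpha$ being the only possible accumulation point of these eigenvalues.

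If $\alpha=0$, then $\sigma(|T|)\cap(0,\infty)$ is finite and consists of finite-multiplicity eigenvalues while $0$ is the sole point of $\sigma_{ess}(|T|)$; hence $|T|=\sum_{i=1}^{k}\lambda_i P_i$ with each $P_i$ of finite rank, so $|T|$, and therefore $T=W|T|$, is of finite rank. A finite-rank quasinormal operator is hyponormal \cite[Theorem 1, Page 104]{FUR}, hence normal by Theorem \ref{compacthypo}, and the claimed form then follows from the representation of normal $\mathcal{AM}$-operators in \cite{BALA1}. So assume $\alpha\ne 0$. Put $\{\alpha_i\}_{i=1}^{n}=(\alpha,\|T\|]\cap\sigma(|T|)$ with $n\in\mathbb{N}$, $\{\beta_j\}_{j=1}^{m}=[m(T),\alpha)\cap\sigma(|T|)$ with $m\in\mathbb{N}\cup\{\infty\}$, and set $M_i=N(|T|-\alpha_i I)$, $N_j=N(|T|-\beta_j I)$, $H_\infty=N(|T|-\alpha I)$. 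By Lemma \ref{quasinormalreducingsubspace} each of these subspaces reduces $W$ and $T$, so I would let $U_i=W|_{M_i}$, $V_j=W|_{N_j}$, $V_\infty=W|_{H_\infty}$. Since $\alpha_i>\alpha>0$ we have $M_i\subseteq N(W)^{\perp}$, so $U_i$ is an isometry, and, $M_i$ being finite dimensional (as $\alpha_i\in\pi_{00}(|T|)$), $U_i$ is unitary; similarly $V_j$ is unitary whenever $\beta_j\ne 0$, since then $N_j\subseteq N(W)^{\perp}$ and $N_j$ is finite dimensional. If $\beta_{j_0}=0$ for the (necessarily smallest) index $j_0$, then $N_{j_0}=N(T)$ is finite dimensional and one may simply take $V_{j_0}=I_{N(T)}$, since $\beta_{j_0}V_{j_0}=0$ regardless. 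As $\alpha\ne 0$ we also have $H_\infty\subseteq N(W)^{\perp}$, so $V_\infty$ is an isometry (and $H_\infty=0$ exactly when $\alpha\notin\sigma_p(|T|)$). Finally, the spectral theorem applied to $|T|$ gives $H=H_1\oplus H_\infty\oplus H_2$ with $H_1=\bigoplus_{i=1}^{n}M_i$ finite dimensional and $H_2=\bigoplus_{j=1}^{m}N_j$, and restricting $T$ to the three summands yields $T=\bigoplus_{i=1}^{n}\alpha_i U_i\oplus\alpha V_\infty\oplus\bigoplus_{j=1}^{m}\beta_j V_j$.

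I expect the main point to be the verification (or the precise citation) of the spectral picture of positive $\mathcal{AM}$-operators in exactly the form used above --- singleton essential spectrum, finitely many spectral points above $\alpha$, and possibly infinitely many below $\alpha$ but accumulating only at $\alpha$ --- because this is where the $\mathcal{AM}$-hypothesis is genuinely consumed and where the finite-dimensionality of $H_1$ (in contrast to $H_2$ in the $\mathcal{AN}$ case) comes from. The remaining care is with the borderline cases $\alpha=0$ (reduced to the normal case) and the possible zero eigenvalue $\beta_{j_0}=0$; apart from that, quasinormality enters only through Lemma \ref{quasinormalreducingsubspace} and the rest is bookkeeping parallel to Theorem \ref{ANquasinormal}.
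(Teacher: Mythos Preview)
Your proposal is correct and follows essentially the same approach as the paper: both reduce to the spectral description of positive $\mathcal{AM}$-operators (singleton essential spectrum $\{\alpha\}$ with only finitely many spectral points above $\alpha$, via \cite[Theorem~3.10]{BALA1} and \cite[Theorem~5.14]{GAN}), then invoke Lemma~\ref{quasinormalreducingsubspace} and repeat the bookkeeping of Theorem~\ref{ANquasinormal} with the roles of $H_1$ and $H_2$ swapped. Your treatment is in fact a bit more explicit than the paper's --- you spell out the $\alpha=0$ case (finite rank, hence normal) and the $\beta_{j_0}=0$ case, whereas the paper simply refers back to the $\mathcal{AN}$ proof.
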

\begin{proof}
	As $T \in \mathcal{AM}(H)$, we have $|T| \in \mathcal{AM}(H)$ \cite[Theorem 5.14]{GAN}. By \cite[Theorem 3.10]{BALA1}, we get $\sigma_{ess}(|T|)$ is a singleton set, say $\{\alpha\}$ and $(\alpha, \|T\|]$ contains atmost finitely many spectral points of $|T|$.
	
	Let  $[m(T), \alpha) \cap \sigma(|T|) = \{\beta_j\}_{j=1}^{m}, m \in \mathbb{N} \cup \{\infty\}$ and $(\alpha, \|T\|] \cap \sigma(|T|) = \{\alpha_i\}_{i=1}^{n}, n \in \mathbb{N}$.

	\hspace{1cm}
	\begin{tikzpicture}
	\draw[orange, very thick](5,-.3)--(5,.3);
	\filldraw(5,-.3)  node[anchor=west] {$\alpha_1$};
	\draw[very thick]  (-5,0)--(5,0);
	\filldraw [gray] (0,0) circle (2.5pt);
	
	\filldraw(0,-.3)  node[anchor=north] {$\alpha$};
	\draw [orange, very thick](-5,-.3)--(-5,.3);
	\filldraw(-5,-.3) node[anchor=east] {$\beta_1$};
	\draw (4,-.2)--(4,.2) node[anchor=south] {$\alpha_2$};
	\filldraw(2.7,-.3)  node[anchor=north] {Finitely many};
	\draw (2.2,-.2)--(2.2,.2) node[anchor=south] {$.$};
	\draw (3,-.2)--(3,.2) node[anchor=south] {$.$};
	\draw (3.5,-.2)--(3.5,.2) node[anchor=south] {$\alpha_3$};
	\draw (2,-.2)--(2,.2) node[anchor=south] {$.$};
	\draw (1,-.2)--(1,.2) node[anchor=south] {$\alpha_{n-1}$};
	\draw (.3,-.2)--(.3,.2) node[anchor=south] {$\alpha_n$};
	
	\draw (-.11,-.2)--(-.11,.2);
	\draw (-.15,-.2)--(-.15,.2);
	\draw (-.21,-.2)--(-.21,.2);
	\draw (-.28,-.2)--(-.28,.2);
	\draw (-.37,-.2)--(-.37,.2);
	\draw (-.47,-.2)--(-.47,.2);	
	\draw (-.58,-.2)--(-.58,.2);
	\draw (-.7,-.2)--(-.7,.2);
	\draw (-.83,-.2)--(-.83,.2);
	\draw (-.97,-.2)--(-.97,.2);
	\draw (-1.12,-.2)--(-1.12,.2)node[anchor=south] {$.$};	
	\draw (-1.28,-.2)--(-1.28,.2)node[anchor=south] {$.$};
	\draw (-1.45,-.2)--(-1.45,.2)node[anchor=south] {$.$};
	\draw (-1.63,-.2)--(-1.63,.2)node[anchor=south] {$.$};
	\draw (-1.82,-.2)--(-1.82,.2) node[anchor=south] {$.$};
	\draw (-2.02,-.2)--(-2.02,.2) node[anchor=south] {$.$};
	\draw (-2.5,-.2)--(-2.5,.2) node[anchor=south] {$\beta_4$};
	\draw (-3.12,-.2)--(-3.12,.2) node[anchor=south] {$\beta_3$};
	\draw (-3.9,-.2)--(-3.9,.2)node[anchor=south] {$\beta_2$};	
	\end{tikzpicture}
	
	\hspace{4cm} Spectral Diagram of $|T| \in \mathcal{AM}(H)$
	
	\vspace{.5cm}By following the similar steps as in Theorem \ref{ANquasinormal}, we get reducing subspaces $H_1, H_\infty, H_2$ such that $H = H_1 \oplus H_\infty \oplus H_2 $, where   $H_1 = \displaystyle \bigoplus_{i = 1}^{n} N(|T| - \alpha_i I), H_\infty = N(|T|-\alpha I)$ and $H_2 = \displaystyle \bigoplus_{j = 1}^{m} N(|T| - \beta_j I), n \in \mathbb{N}, m \in \mathbb{N} \cup \{\infty\}$ and with respect to this decomposition $T$ can be written as \[T = \displaystyle \bigoplus_{i = 1}^{n} \alpha_i U_i \bigoplus \alpha V_\infty \bigoplus_{j = 1}^{m} \beta_jV_j,\] where $U_i$ is a unitary operator on $N(|T|- \alpha_i I)$, $V_j$ is a unitary operator on $N(|T|-\beta_j I)$ and $V_\infty \in \mathcal{B}(H_\infty)$ is an isometry. As $\alpha_i \in \pi_{00}(|T|)$ for all $i = 1,2,\dots,n$, we get $H_1$ is finite dimensional.
\end{proof}

\begin{corollary}
	Let $T \in \mathcal{AM}(H)$ be quasinormal and $\alpha \in \sigma_{ess}(|T|)$. Then any of the following conditions imply that $T$ is normal.
	\begin{enumerate}
		\item $\alpha \notin \sigma_{p}(|T|)$.
		\item $\alpha \in \sigma_{p}(|T|)$ with finite dimensional eigenspace.
	\end{enumerate}
\end{corollary}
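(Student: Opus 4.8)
The plan is to deduce everything directly from the representation obtained in Theorem~\ref{AMquasinormal}, observing that the unique potential obstruction to normality there is the middle block $\alpha V_\infty$, and that each of the two listed hypotheses forces this block either to vanish or to be a scalar multiple of a unitary.

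First I would apply Theorem~\ref{AMquasinormal} to write $T = \bigoplus_{i=1}^{n}\alpha_i U_i \bigoplus \alpha V_\infty \bigoplus_{j=1}^{m}\beta_j V_j$ with respect to the reducing decomposition $H = H_1 \oplus H_\infty \oplus H_2$, where each $U_i$ and each $V_j$ is unitary on the corresponding eigenspace of $|T|$, and $V_\infty \in \mathcal{B}(H_\infty)$ is an isometry with $H_\infty = N(|T| - \alpha I)$. In case~(1), the hypothesis $\alpha \notin \sigma_p(|T|)$ says exactly that $N(|T| - \alpha I) = \{0\}$, so $H_\infty = \{0\}$ and the middle summand is absent, leaving $T = \bigoplus_{i}\alpha_i U_i \bigoplus_{j}\beta_j V_j$. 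In case~(2), $H_\infty$ is finite-dimensional, and an isometry of a finite-dimensional Hilbert space is automatically surjective, hence unitary; so $V_\infty$ is unitary. In both cases $T$ is therefore an orthogonal direct sum of blocks of the form $\lambda W$ with $W$ unitary (allowing $\lambda = 0$, in which case the block is just the zero operator on $N(T)$). Each such block is normal since $(\lambda W)(\lambda W)^* = |\lambda|^2 I = (\lambda W)^*(\lambda W)$, and an orthogonal direct sum of normal operators is normal (adjoint and composition both distribute over the direct sum). Hence $T$ is normal in either case.

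I do not expect a genuine obstacle here: the combinatorial bookkeeping has already been carried out in Theorem~\ref{AMquasinormal}, and the only point that merits explicit comment is that the sole way the representation there can fail to be normal is for $V_\infty$ to be a proper (non-surjective) isometry on an infinite-dimensional $H_\infty$; hypotheses~(1) and~(2) are precisely the two mutually exclusive ways of ruling this out (either $H_\infty = \{0\}$, or $H_\infty$ is finite-dimensional so every isometry on it is unitary). One should also keep in mind the degenerate sub-case $m(T)=0$, where some $\beta_j$ equals $0$ and the corresponding block is the zero operator on $N(T)$, which is still normal and so does not affect the argument.
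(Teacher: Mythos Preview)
Your proposal is correct and follows essentially the same approach as the paper: the corollary is an immediate consequence of the representation in Theorem~\ref{AMquasinormal}, where conditions~(1) and~(2) force the only potentially non-normal summand $\alpha V_\infty$ either to vanish or to have $V_\infty$ unitary (since an isometry on a finite-dimensional space is unitary). The paper itself leaves this corollary without proof, relying on the analogy with the corresponding corollary after Theorem~\ref{ANquasinormal}, whose proof is exactly the case analysis you carry out.
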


Hence we now give a representation of quasinormal operators in $\overline{\mathcal{AN}(H)}$ which contains both $\mathcal{AN}$ and $\mathcal{AM}$-operators.
\begin{theorem}\label{ANquasinormal}
	Let $T \in \overline{\mathcal{AN}(H)}$ be quasinormal. Then there exists reducing subspaces $H_\infty, H_1$ and $H_2$ such that $H = H_1 \oplus  H_\infty  \oplus H_2$ and \begin{equation}
	T = \displaystyle \bigoplus_{i = 1}^{n} \alpha_i U_i \bigoplus \alpha V \bigoplus_{j = 1}^{m} \beta_jV_j,
	\end{equation} for some $n, m \in \mathbb{N} \cup \{\infty\}$, where
	\begin{enumerate}
		\item [(i)]  $\sigma_{ess}(|T|) = \{\alpha\}, \alpha \geq 0$, $H_\infty = N(|T| - \alpha I)$  and $V_\infty \in \mathcal{B}(H_\infty)$ is an isometry.
		
		\item [(ii)] $(\alpha, \|T\|] \cap \sigma(|T|)= \{\alpha_i\}_{i =1}^{n}$, $H_1 = \displaystyle \bigoplus_{i = 1}^{n} N(|T| - \alpha_i I)$ and $U_i \in \mathcal{B}(N(|T| - \alpha_iI))$ is a unitary operator for all $i =1,2, \dots, n$, $n \in \mathbb{N} \cup \{\infty\}$.
		\item [(iii)] $[m(T), \alpha) \cap \sigma(|T|) =  \{\beta_j\}_{j =1}^{m}$, $H_2 = \displaystyle \bigoplus_{j = 1}^{m} N(|T| - \beta_j I)$ and $V_j \in \mathcal{B}(N(|T| - \beta_jI))$ is a unitary operator for all $j =1,2, \dots, m$, $m \in \mathbb{N} \cup \{\infty\}$.
	\end{enumerate}
\end{theorem}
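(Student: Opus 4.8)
The plan is to run essentially the same argument as in the $\mathcal{AN}$ and $\mathcal{AM}$ versions (Theorem \ref{AMquasinormal} and its $\mathcal{AN}$-counterpart), the one new ingredient being the structure of positive operators in $\overline{\mathcal{AN}(H)}$ from Theorem \ref{positivechar}; this is precisely what allows \emph{both} the part of $\sigma(|T|)$ above $\alpha$ and the part below $\alpha$ to be infinite, which is the only genuine difference from the two special cases.

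First I would take the polar decomposition $T = W|T|$ with $N(W) = N(T) = N(|T|)$. Since $T \in \overline{\mathcal{AN}(H)}$, \cite[Lemma 3.13]{RAMSSS} gives $|T| \in \overline{\mathcal{AN}(H)}_{+}$, so by Theorem \ref{positivechar} we may write $|T| = \alpha I - K_1 + K_2$ with $\alpha \ge 0$, $K_1, K_2 \in \mathcal{K}(H)_{+}$, $K_1 K_2 = 0$ and $K_1 \le \alpha I$. In particular $|T| - \alpha I = K_2 - K_1$ is compact and self-adjoint, whence $\sigma_{ess}(|T|) = \{\alpha\}$ (cf.\ Theorem \ref{positiveessentialspectrum}). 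If $\alpha = 0$ then $K_1 = 0$, $|T| = K_2$ is compact, and $T$ is a compact quasinormal — hence hyponormal — operator, which is normal by Theorem \ref{compacthypo}; the asserted form then follows from the spectral theorem for compact normal operators (equivalently, from the $\mathcal{AN}$ version of this theorem, since compact operators lie in $\mathcal{AN}(H)$). So from now on assume $\alpha \neq 0$.

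Because $|T| - \alpha I$ is compact self-adjoint, the spectral theorem for such operators yields the orthogonal decomposition
\[
H = N(|T| - \alpha I) \;\oplus\; \bigoplus_{\lambda \in \sigma(|T|)\setminus\{\alpha\}} N(|T| - \lambda I),
\]
where $\sigma(|T|)\setminus\{\alpha\}$ consists of eigenvalues of finite multiplicity with $\alpha$ as the only possible accumulation point. Using $m(T) = \min\sigma(|T|)$ and $\|T\| = \max\sigma(|T|)$, set $\{\alpha_i\}_{i=1}^{n} = (\alpha,\|T\|]\cap\sigma(|T|)$ and $\{\beta_j\}_{j=1}^{m} = [m(T),\alpha)\cap\sigma(|T|)$ with $n, m \in \mathbb{N}\cup\{\infty\}$, and put $H_\infty = N(|T|-\alpha I)$, $M_i = N(|T|-\alpha_i I)$, $N_j = N(|T|-\beta_j I)$, $H_1 = \bigoplus_i M_i$, $H_2 = \bigoplus_j N_j$, so that $H = H_1 \oplus H_\infty \oplus H_2$. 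Since $T$ is quasinormal, Lemma \ref{quasinormalreducingsubspace} shows that each $M_i$, each $N_j$, and (when $\alpha \in \sigma_p(|T|)$) also $H_\infty$ reduce $W$; being eigenspaces they trivially reduce $|T|$, hence they reduce $T = W|T|$ (and when $H_\infty = \{0\}$ there is nothing to check). Set $U_i = W|_{M_i}$, $V_j = W|_{N_j}$, $V_\infty = W|_{H_\infty}$. For $x \in M_i$ one has $Tx = W|T|x = \alpha_i W x = \alpha_i U_i x$, and likewise $T|_{H_\infty} = \alpha V_\infty$ and $T|_{N_j} = \beta_j V_j$, giving the stated direct-sum representation. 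Finally, $\alpha,\alpha_i > 0$ force $M_i, H_\infty \perp N(|T|) = N(W)$, so $U_i$ and $V_\infty$ are isometries; since $\dim M_i < \infty$, each $U_i$ is unitary, while $H_\infty$ may be infinite-dimensional, so $V_\infty$ is only guaranteed to be an isometry. The same argument makes $V_j$ unitary whenever $\beta_j \neq 0$; in the one exceptional case $\beta_j = m(T) = 0$ we have $N_j = N(T)$ and $\beta_j V_j = 0$, so we may simply take $V_j = I_{N(T)}$.

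The point needing the most care is the decomposition $H = H_1 \oplus H_\infty \oplus H_2$, i.e.\ that the eigenspaces of $|T|$ exhaust $H$; this is exactly where compactness of $|T| - \alpha I$ — supplied by Theorem \ref{positivechar} — is essential, and it automatically handles the delicate situation $\alpha \in \sigma_c(|T|)$: then $H_\infty = \{0\}$, there are necessarily infinitely many eigenvalues among the $\alpha_i$ and $\beta_j$, and $H = H_1 \oplus H_2$. Everything else (the reduction of the $W$-action via Lemma \ref{quasinormalreducingsubspace}, the isometry/unitarity bookkeeping, and the degenerate cases $\alpha = 0$ and $\beta_j = 0$) is routine and parallels the proofs of the $\mathcal{AN}$ and $\mathcal{AM}$ versions.
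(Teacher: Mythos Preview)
Your argument is correct, but it differs from the paper's route. The paper does \emph{not} rerun the $\mathcal{AN}$/$\mathcal{AM}$ argument directly; instead it first shows that the partial isometry $W$ commutes with $K_1$ and $K_2$ separately (using $W|T|=|T|W$, whence $W$ commutes with $K_2-K_1$, and then the trick $(K_2-K_1)^2=K_1^2+K_2^2=(K_1+K_2)^2$ from $K_1K_2=0$ to get commutation with $K_1+K_2$). This makes $N(K_1)$ reduce $W$ and hence $T$, giving a splitting $T=T_1\oplus T_2$ on $N(K_1)\oplus N(K_1)^\perp$ with $T_1\in\mathcal{AN}(N(K_1))$ and $T_2\in\mathcal{AM}(N(K_1)^\perp)$ both quasinormal, after which the already-proved Theorems~\ref{ANquasinormal} and~\ref{AMquasinormal} are invoked as black boxes.

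Your approach bypasses this: you observe once and for all that $|T|-\alpha I=K_2-K_1$ is compact self-adjoint, get the full eigenspace decomposition of $H$ in one stroke, and then apply Lemma~\ref{quasinormalreducingsubspace} eigenspace-by-eigenspace. This is more direct and self-contained; it avoids both the commutation trick and the need to quote the two earlier theorems. The paper's approach, on the other hand, is modular and makes the $\mathcal{AN}/\mathcal{AM}$ splitting of a general operator in $\overline{\mathcal{AN}(H)}$ explicit, which is of independent interest (cf.\ Corollary~\ref{directsumANandAM} in the hyponormal section).
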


\begin{proof}
	Let $T= W|T|$ be the polar decomposition of $T$.
	
	As $|T| \in \overline{\mathcal{AN}(H)}$, we have $|T| = \alpha I - K_1 +K_2$, where $\alpha \geq 0, K_1, K_2 \in \mathcal{K}(H)_+$ with $K_1K_2 =0$ and $K_1 \leq \alpha I$ by \cite[Theorem 4.2]{RAMSSS}. So by Proposition \ref{prop1} (\ref{Tdecomposition}), we have
	\item \label{Tdecomposition}
	\[|T| =
	\begin{blockarray}{ccc}
	N(K_1) & N(K_1)^\bot\\
	\begin{block}{(cc)c}
	\alpha I_{N(K_1)} +	\widetilde{K_2} & 0 & N(K_1) \\
	{}\\
	0 & \alpha I_{N(K_1)^{\bot}} -	\widetilde{K_1} & N(K_1)^\bot\\
	\end{block}
	\end{blockarray},\]
	
	where $\widetilde{K_2} = K_2|_{N(K_1)}$ and $\widetilde{K_1} = K_1|_{N(K_1)^{\bot}}$.
	
	Since $T$ is quasinormal, by \cite[Proposition 6.4, Page 444]{KUB} we have $W|T| = |T|W$ which further implies that $W(K_2 -K_1) = (K_2 -K_1)W$. So $W(K_2-K_1)^2 = (K_2-K_1)^2W$. As $K_1K_2 =0$, we get $W(K^2_2+ K^2_1) = (K^2_2+ K^2_1)W$. This implies $W(K_2 +K_1)^2 = (K_2 +K_1)^2W$. Hence $W$ commutes with $K_1 + K_2$ which is a positive square root of $(K_2 +K_1)^2$.
	
	Therefore we get $WK_1 = K_1W$ and $WK_2 = K_2W$. This implies $N(K_1)$ is invariant under $W$. Further, we also have $W^*|T| = |T|W^*$. Hence $N(K_1)$ reduces $W$. Let
	
	\[W =
	\begin{blockarray}{ccc}
	N(K_1) & N(K_1)^\bot\\
	\begin{block}{(cc)c}
	W_1 & 0 & N(K_1) \\
	0 & W_2 & N(K_1)^\bot\\
	\end{block}
	\end{blockarray}.\]
	
	Then \[T =
	\begin{blockarray}{ccc}
	N(K_1) & N(K_1)^\bot\\
	\begin{block}{(cc)c}
	T_1 & 0 & N(K_1) \\
	0 & T_2 & N(K_1)^\bot\\
	\end{block}
	\end{blockarray},\]
	where $T_1 = W_1(\alpha I_{N(K_1)} +	\widetilde{K_2}) \in \mathcal{AN}(H)$ , $T_2 = W_2(\alpha I_{N(K_1)^{\bot}} -	\widetilde{K_1}) \in \mathcal{AM}(H)$ and both $T_1$ and $T_2$ are quasinormal.
	
	Let $\sigma(|T_1|) = \{\alpha_i\}_{i=1}^{n}$ and $\sigma(|T_2|) =  \{\beta_j\}_{j=1}^{m}$, $n, m \in \mathbb{N} \cup \{\infty\}$, where $\{\alpha_i\}$ decreases to $\alpha$ and $\{\beta_j\}$ increases to $\alpha$, if they are infinite. Since by \cite[Theorem 4.5]{RAMSSS}, we have $\sigma_{ess}(|T|)$ is a singleton set, say $\{\alpha\}$, we get $\sigma_{ess}(|T_1|) = \sigma_{ess}(|T_2|) = \{\alpha\}$. If $\alpha \in \sigma_{p}(|T|)$, then we let $\alpha \in \sigma_{p}(|T_2|)$ and $\alpha \notin \sigma_{p}(|T_1|)$
	
	\hspace{1cm}
	\begin{tikzpicture}
	\draw[orange, very thick](5,-.3)--(5,.3);
	\filldraw(5,-.3)  node[anchor=west] {$\alpha_1$};
	\draw[very thick]  (-5,0)--(5,0);
	\filldraw [gray] (0,0) circle (2.5pt);
	\filldraw(0,-.3)  node[anchor=north] {$\alpha$};
	\draw [orange, very thick](-5,-.3)--(-5,.3);
	\filldraw(-5,-.3) node[anchor=east] {$\beta_1$};
	\draw (-.11,-.2)--(-.11,.2);
	\draw (-.15,-.2)--(-.15,.2);
	\draw (-.21,-.2)--(-.21,.2);
	\draw (-.28,-.2)--(-.28,.2);
	\draw (-.37,-.2)--(-.37,.2);
	\draw (-.47,-.2)--(-.47,.2);	
	\draw (-.58,-.2)--(-.58,.2);
	\draw (-.7,-.2)--(-.7,.2);
	\draw (-.83,-.2)--(-.83,.2);
	\draw (-.97,-.2)--(-.97,.2);
	\draw (-1.12,-.2)--(-1.12,.2) node[anchor=south] {.};	
	\draw (-1.28,-.2)--(-1.28,.2) node[anchor=south] {.};
	\draw (-1.45,-.2)--(-1.45,.2) node[anchor=south] {.};
	\draw (-1.63,-.2)--(-1.63,.2) node[anchor=south] {.};
	\draw (-1.82,-.2)--(-1.82,.2) node[anchor=south] {.};
	\draw (-2.02,-.2)--(-2.02,.2) node[anchor=south] {$\beta_5$};
	\draw (-2.5,-.2)--(-2.5,.2) node[anchor=south] {$\beta_4$};
	\draw (-3.12,-.2)--(-3.12,.2) node[anchor=south] {$\beta_3$};
	\draw (-3.7,-.2)--(-3.7,.2) node[anchor=south] {$\beta_2$};
	
	\draw (.10,-.2)--(.10,.2);
	\draw (.15,-.2)--(.15,.2);
	\draw (.21,-.2)--(.21,.2);
	\draw (.28,-.2)--(.28,.2);
	\draw (.37,-.2)--(.37,.2);
	\draw (.47,-.2)--(.47,.2);	
	\draw (.58,-.2)--(.58,.2);
	\draw (.7,-.2)--(.7,.2);
	\draw (.83,-.2)--(.83,.2) node[anchor=south] {$.$};
	\draw (.97,-.2)--(.97,.2) node[anchor=south] {$.$};
	\draw (1.12,-.2)--(1.12,.2)node[anchor=south] {$.$};	
	\draw (1.28,-.2)--(1.28,.2) node[anchor=south] {$.$};
	\draw (1.45,-.2)--(1.45,.2) node[anchor=south] {$.$};
	\draw (1.63,-.2)--(1.63,.2) node[anchor=south] {$.$};
	\draw (1.82,-.2)--(1.82,.2) node[anchor=south] {$.$};
	\draw (2.02,-.2)--(2.02,.2) node[anchor=south] {$\alpha_5$};
	\draw (2.5,-.2)--(2.5,.2) node[anchor=south] {$\alpha_4$};
	\draw (3.12,-.2)--(3.12,.2) node[anchor=south] {$\alpha_3$};
	\draw (3.9,-.2)--(3.9,.2) node[anchor=south] {$\alpha_2$};
	\end{tikzpicture}
	
	\hspace{4cm} Spectral Diagram of $|T| \in \overline{\mathcal{AN}(H)}$
	
	\vspace{0.5cm} As $m(T_1) = \alpha$ and $\alpha \notin \sigma_{p}(|T_1|)$, by Theorem \ref{ANquasinormal},  we get $N(K_1)= \displaystyle \bigoplus_{i=1}^{n} N(|T|-\alpha_i I) $ and $T_1 = \displaystyle \bigoplus_{i=1}^{n} \alpha_iU_i$ where $U_i = W|_{N(|T|-\alpha_i I)}$ is a unitary operator for all $i=1,2,\dots, n$, $n \in \mathbb{N} \cup \{\infty\}$.  Similarly, as $\alpha = \|T_2\|$, by Theorem \ref{AMquasinormal}, we get $N(K_1)^\bot = \displaystyle \bigoplus_{i=1}^{m} N(|T|-\beta_j I) \bigoplus N(|T|-\alpha I)$ and $T_2 = \displaystyle \bigoplus_{j=1}^{m} \beta_jV_j \bigoplus \alpha V$, where $V_j = W|_{N(|T|-\beta_j I)}$ is a unitary operator for all $i=1,2,\dots, m$, $m \in \mathbb{N} \cup \{\infty\}$ and $V_\infty = W|_{N(|T|-\alpha I)}$ is an isometry.  Taking $H_1 := \displaystyle \bigoplus_{i=1}^{n} N(|T|-\alpha_i I)$, $H_\infty := N(|T|-\alpha I)$ and $H_2 := \displaystyle \bigoplus_{i=1}^{m} N(|T|-\beta_j I)$, we get $H = H_1 \oplus H_\infty \oplus H_2$ and $T = \displaystyle \bigoplus_{i=1}^{n} \alpha_iU_i \bigoplus \alpha V_\infty \bigoplus_{j=1}^{m} \beta_jV_j$ , $n, m \in \mathbb{N} \cup \{\infty\}$.
\end{proof}

\begin{corollary}
	Let $T \in \overline{\mathcal{AN}(H)}$ be quasinormal with $\alpha \in \sigma_{ess}(|T|)$. Then any of the following conditions imply that $T$ is normal.
\begin{enumerate}
	\item $\alpha \notin \sigma_{p}(|T|)$.
	\item $\alpha \in \sigma_{p}(|T|)$ with finite dimensional eigenspace.
\end{enumerate}
\end{corollary}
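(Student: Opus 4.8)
The plan is to read off the conclusion directly from the structure theorem for quasinormal operators in $\overline{\mathcal{AN}(H)}$ (Theorem~\ref{ANquasinormal}). That theorem gives a decomposition $H = H_1 \oplus H_\infty \oplus H_2$ reducing $T$, with
\[
T = \bigoplus_{i=1}^{n} \alpha_i U_i \,\oplus\, \alpha V_\infty \,\oplus\, \bigoplus_{j=1}^{m} \beta_j V_j,
\]
where each $U_i$ and each $V_j$ is a unitary operator on the corresponding eigenspace of $|T|$, and $V_\infty$ is an isometry on $H_\infty = N(|T| - \alpha I)$.

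First I would note that a scalar multiple of a unitary operator is normal, so $\bigoplus_i \alpha_i U_i$ and $\bigoplus_j \beta_j V_j$ are normal, and a direct sum of normal operators (with uniformly bounded norms, which is automatic since $T \in \mathcal{B}(H)$) is again normal. Hence $T$ is normal as soon as the middle summand $\alpha V_\infty$ is normal. If $\alpha = 0$ this summand is the zero operator and we are done; if $\alpha \neq 0$, then $\alpha V_\infty$ is normal iff $V_\infty$ is normal, and an isometry is normal precisely when it is unitary, since then $V_\infty^* V_\infty = V_\infty V_\infty^* = I$.

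It then remains to check that each hypothesis forces $V_\infty$ to be unitary (or absent). Under (1), $\alpha \notin \sigma_{p}(|T|)$ gives $H_\infty = N(|T| - \alpha I) = \{0\}$, so the middle block does not appear and $T$ is a direct sum of normal operators. Under (2), $H_\infty$ is finite dimensional, and an isometry of a finite-dimensional Hilbert space into itself is necessarily surjective, hence unitary; thus $\alpha V_\infty$ is normal. In both cases $T$ is a direct sum of normal operators, hence normal.

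There is essentially no remaining obstacle: the real work has already been carried out in Theorem~\ref{ANquasinormal}, where Lemma~\ref{quasinormalreducingsubspace} together with the commutation relation $W|T| = |T|W$ was used to show that the partial isometry $W$ from the polar decomposition restricts to operators on the eigenspaces of $|T|$, legitimizing the direct-sum representation of $T$. Once that representation is available, the corollary reduces to the elementary observation that the only block of $T$ that can fail to be normal is $\alpha V_\infty$, and conditions (1) and (2) each rule this failure out.
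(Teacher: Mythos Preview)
Your proposal is correct and matches the paper's intended argument: the corollary is immediate from the structure theorem (Theorem~\ref{ANquasinormal}), and the only block that can obstruct normality is $\alpha V_\infty$, which disappears under~(1) and becomes unitary under~(2). The paper does not spell out a separate proof for this corollary, but your reasoning is exactly the one implicit in the analogous corollaries following Theorems~3.1 and~3.3.
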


\section{Hyponormal operators in $\overline{\mathcal{AN}(H)}$}

In \cite{BALA2}, the authors have given a representation for hyponormal $\mathcal{AN}$-operators. In this section, we first give a representation of hyponormal $\mathcal{AM}$-operators followed by a representation of hyponormal operators in $\overline{\mathcal{AN}(H)}$. The results in this section generalizes the results of \cite{BALA2}.

\begin{lemma} \label{hyponormal1}
	Let $T \in \mathcal{AM}(H)$ be hyponormal with $\sigma_{ess}(|T|) = \{\|T\|\}$. If $\|T\|$ is an eigenvalue of $|T|$, then  there exists subspaces $H_1$ and $H_2$ such  that $H = H_1 \oplus H_2$ and  with respect to $H_1 \oplus H_2$,  $T$ can be represented as follows.
		\[T =
		\begin{blockarray}{ccc}
		H_1 & H_2 \\
		\begin{block}{(cc)c}
	\|T\| V_1 & A & H_ 1 \\
	     0 & B & H_2 \\
		\end{block}
		\end{blockarray},
		\]
where $H_1 = N(|T|- \|T\| I), H_2 = \displaystyle \bigoplus_{j = 1}^{m} N(|T|-\beta_j I)$ for some $m \in \mathbb{N} \cup \{\infty\}$ if $\pi_ {00}(|T|) = \{\beta_j\}_{j=1}^{m}$ and
\begin{enumerate}
	\item [i)] $V_1$ is an isometry on $H_1$.
	\item [ii)]$A \in \mathcal{B}(H_2, H_1)$ and  $B \in \mathcal{B}(H_2)$ satisfying $V^*_1A = 0$, $A^*A +B^*B = \displaystyle \bigoplus_{j=1}^{m}\beta^2_j I_{N(|T|- \beta_j I)}$ and $B B^* \leq \displaystyle \bigoplus_{j=1}^{m}\beta^2_j I_{N(|T|- \beta_j I)}$.
\end{enumerate}
If $\pi_{00}(|T|)$ is empty, then $T = \|T\|V_1$.
\end{lemma}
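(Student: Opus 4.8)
The plan is to first pin down the spectral structure of $|T|$, then to show that the norm‑eigenspace $H_1:=N(|T|-\|T\|I)$ is invariant under $T$, and finally to read off the asserted block form and the three relations from the identity $T^*T=|T|^2$ together with hyponormality. I expect the invariance step to be the only real obstacle; the rest is a routine $2\times2$ block computation. If $T=0$ the statement is trivial, so assume $T\neq0$.

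\emph{Structure of $|T|$.} Since $T\in\mathcal{AM}(H)$ is hyponormal, $|T|\in\mathcal{AM}(H)$ by \cite[Theorem 5.14]{GAN}, and $|T|$ is positive with $\sigma_{ess}(|T|)=\{\|T\|\}$. Applying Theorem \ref{AMquasinormal} to the normal (hence quasinormal) operator $|T|$, with $\|T\|$ in the role of $\alpha$ so that the part $(\alpha,\|T\|]\cap\sigma(|T|)$ is empty, gives $\sigma(|T|)\setminus\{\|T\|\}=\pi_{00}(|T|)=:\{\beta_j\}_{j=1}^{m}$ for some $m\in\mathbb{N}\cup\{\infty\}$, together with $H=H_1\oplus H_2$ where $H_1=N(|T|-\|T\|I)$, $H_2=\bigoplus_{j=1}^{m}N(|T|-\beta_jI)$, and
\[|T|=\|T\|\,I_{H_1}\ \oplus\ \bigoplus_{j=1}^{m}\beta_j\,I_{N(|T|-\beta_jI)}.\]
If $\pi_{00}(|T|)=\emptyset$, then $\sigma(|T|)=\{\|T\|\}$, so $|T|=\|T\|I$, $N(W)=N(|T|)=\{0\}$, and $T=\|T\|W$ with $W$ an isometry on $H$; this is the final assertion of the lemma, so from now on assume $\pi_{00}(|T|)\neq\emptyset$.

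\emph{Invariance of $H_1$.} This is the heart of the argument. Let $x\in H_1$, so $T^*Tx=\|T\|^2x$; then $TT^*(Tx)=T(T^*Tx)=\|T\|^2(Tx)$, hence $\langle(\|T\|^2I-TT^*)Tx,Tx\rangle=0$. By hyponormality $TT^*\le T^*T=|T|^2\le\|T\|^2I$, so $0\le\|T\|^2I-|T|^2\le\|T\|^2I-TT^*$, whence $0\le\langle(\|T\|^2I-|T|^2)Tx,Tx\rangle\le\langle(\|T\|^2I-TT^*)Tx,Tx\rangle=0$. Since $\|T\|^2I-|T|^2\ge0$, this forces $(\|T\|^2I-|T|^2)Tx=0$, i.e.\ $Tx\in N(|T|^2-\|T\|^2I)=N(|T|-\|T\|I)=H_1$. (Alternatively, a hyponormal operator is paranormal \cite{FUR}, so for a unit vector $x\in H_1$ one has $\|T\|^2=\|Tx\|^2\le\|T^2x\|\le\|T\|^2$, again giving $Tx\in H_1$.) Thus $H_1$ is $T$‑invariant, and writing blocks with respect to $H=H_1\oplus H_2$,
\[T=\begin{blockarray}{ccc} H_1 & H_2\\ \begin{block}{(cc)c} T_{11} & A & H_1\\ 0 & B & H_2\\ \end{block}\end{blockarray},\]
with $A:=P_{H_1}T|_{H_2}\in\mathcal{B}(H_2,H_1)$ and $B:=P_{H_2}T|_{H_2}\in\mathcal{B}(H_2)$. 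Since $|T|$ acts as $\|T\|I$ on $H_1$ we have $T_{11}=T|_{H_1}=\|T\|\,W|_{H_1}$, and because $H_1\perp N(|T|)=N(W)$ and $T(H_1)\subseteq H_1$, the operator $V_1:=W|_{H_1}:H_1\to H_1$ is an isometry, so $T_{11}=\|T\|V_1$.

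\emph{The relations.} Since $|T|^2$ is block diagonal with respect to $H_1\oplus H_2$, so is $T^*T=|T|^2=\|T\|^2I_{H_1}\oplus D$, where $D:=\bigoplus_{j=1}^{m}\beta_j^2\,I_{N(|T|-\beta_jI)}$. Computing $T^*T$ from the block form of $T$, its $(1,2)$ entry is $\|T\|V_1^*A$ and its $(2,2)$ entry is $A^*A+B^*B$; comparing with $\|T\|^2I_{H_1}\oplus D$ gives $V_1^*A=0$ and $A^*A+B^*B=D=\bigoplus_{j=1}^{m}\beta_j^2\,I_{N(|T|-\beta_jI)}$. Finally, hyponormality gives $T^*T-TT^*\ge0$; compressing this inequality to $H_2$ and noting that the $(2,2)$ blocks of $T^*T$ and $TT^*$ are $D$ and $BB^*$ respectively, we obtain $BB^*\le D=\bigoplus_{j=1}^{m}\beta_j^2\,I_{N(|T|-\beta_jI)}$. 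The point throughout is to exploit that $T$ attains its norm on $H_1$, which is exactly what lets the positive operators $\|T\|^2I-|T|^2$ and $\|T\|^2I-TT^*$ be played off against each other via hyponormality; once the invariance of $H_1$ is secured, the remaining identities are immediate from the block multiplications.
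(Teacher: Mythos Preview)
Your proof is correct and follows essentially the same route as the paper: establish the diagonal form of $|T|$, prove that $H_1=N(|T|-\|T\|I)$ is $T$-invariant, write $T$ in $2\times2$ block form, and read off the relations from $T^*T=|T|^2$ and $T^*T-TT^*\ge0$. The only cosmetic differences are that the paper uses an isometric factorization $T=V|T|$ (from \cite{XIA}) rather than the polar decomposition, and it outsources the invariance of $H_1$ to \cite[Theorem~3]{LEE}, whereas you prove it directly via $TT^*\le T^*T\le\|T\|^2I$; your self-contained argument is a small gain in clarity but not a different idea.
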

\begin{proof}
	The proof is inspired by \cite[Proposition 3.3]{BALA2}.
		Let $T = V|T|$ be a factorization of $T$ such that $V$ is an isometry \cite[Page 4]{XIA}. As $\|T\|$ is an eigenvalue of $|T|$, $H_1 \neq \{0\}$. Since $T$ is hyponormal, by \cite[Theorem 3]{LEE},  we have that $H_1$ is invariant under $T$ and $\frac{T}{\|T\|}$ is an isometry on $H_1$. Also, if $x \in H_1$, then $|T|x = \|T\|x$. So
	\begin{equation*}
	Vx = V \frac{|T|x}{\|T\|} = \frac{Tx}{\|T\|} \in H_1.
	\end{equation*}
	This implies $H_1$ is invariant under $V$ as well.
Hence with respect to $H_1 \oplus H_2$, we can write $V$ and $|T|$ as
\[V =
\begin{blockarray}{ccc}
H_1 & H_2  \\
\begin{block}{(cc)c}
V_1 & V_2 & H_1 \\
0 & V_3 & H_2 \\
\end{block}
\end{blockarray},
 \
|T| =
\begin{blockarray}{ccc}
H_1 & H_2  \\
\begin{block}{(cc)c}
\|T\| I_{H_1} & 0 & H_1 \\
0 & T_1& H_2 \\
\end{block}
\end{blockarray},
\]
where $V_1$ is an isometry on $H_1, V_2 = P_{H_1}V|_{H_2}, V_3 = P_{H_2}V|_{H_2}$ and $T_1 = \displaystyle \bigoplus_{j=1}^{m}\beta_jI_{N(|T|-\beta_jI)}$, where $\{\beta_j\}_{j=1}^{m} \in \pi_{00}(|T|)$ for some $m \in \mathbb{N} \cup \{\infty\}$. If $\pi_{00}(|T|)$ is empty, then we get $T = \|T\|V_1$.

If $\pi_{00}(|T|)$ is non-empty, then $H_2 \neq \{0\}$ and we have
\begin{equation*}
\begin{split}
T &=
\begin{pmatrix}
V_1  &  V_2   \\
0  &  V_3
\end{pmatrix}
\begin{pmatrix}
\|T\| I_{H_1} &  0  \\
0  &  T_1
\end{pmatrix} \\
& = \begin{pmatrix}
\|T\| V_1 &  V_2 T_1 \\
0  & V_3 T_1
\end{pmatrix}
=
\begin{pmatrix}
\|T\| V_1 &  A \\
0  & B
\end{pmatrix} ,
\end{split}
\end{equation*}

where $A = V_2T_1$ and $B = V_3T_1$.

Substituting in  $T^*T = |T|^2$, 
\begin{equation*}
\begin{split}
 \begin{pmatrix}
\|T\|^2 V^*_1V_1 & \|T\|V^*_1A \\
\|T\|A^*V_1 & A^*A +B^*B
\end{pmatrix}
= 
\begin{pmatrix}
\|T\|^2 I_{H_1} &  0  \\
0  &  T^2_1
\end{pmatrix} \\
\end{split}.
\end{equation*}

Equating on both sides, we get $V^*_1A =0$ and $A^*A +B^*B = T^2_1 = \displaystyle \bigoplus_{j=1}^{m}\beta^2_jI_{N(|T|-\beta_jI)}.$ 

Now, by the definition of hyponormality of $T$, we have
\begin{equation*}
\begin{split}
0 & \leq  T^*T -TT^*\\
&=  \begin{pmatrix}
\|T\|^2 V^*_1V_1 & \|T\|V^*_1A \\
\|T\|A^*V_1 & A^*A +B^*B
\end{pmatrix}
-
\begin{pmatrix}
\|T\|^2 V_1V^*_1 + AA^* & AB^* \\
BA^*  & BB^*
\end{pmatrix} \\
&=  \begin{pmatrix}
\|T\|^2 I_{H_1}- \|T\|^2 P_{R(V_1)}-AA^* & \|T\|V^*_1A-AB^* \\
\|T\|A^*V_1-BA^*  & A^*A +B^*B-BB^*
\end{pmatrix}\\
&= \begin{pmatrix}
\|T\|^2 P_{N(V^*_1)}-AA^* & -AB^* \\
-BA^*  & A^*A +B^*B-BB^*
\end{pmatrix}.
\end{split}
\end{equation*}

Hence $A^*A +B^*B-BB^* \geq 0$ implies $BB^* \leq \bigoplus_{j=1}^{m}\beta^2_jI_{N(|T|-\beta_jI)}.$

 \end{proof}

Using the above lemma, we give a representation of hyponormal $\mathcal{AM}$-operators.

\begin{theorem}\label{AMHypo}
	Let $T \in \mathcal{AM}(H)$ be hyponormal. Then there exists subspaces $H_0, H_1, H_2$  with dim$(H_0) < \infty$ such that $H = H_0 \oplus H_1 \oplus H_2$ and $T$ has the representation as given below.
		\begin{equation} \label{AMhypomatrix}
	T =
	\begin{blockarray}{cccc}
	H_0 & H_1 & H_2 \\
	\begin{block}{(ccc)c}
	V_0 & 0 & 0 & H_0 \\
	0 & \beta V_1 & A & H_1 \\
	0 & 0 & B & H_2\\
	\end{block}
	\end{blockarray},
	\end{equation}
	where $\sigma_{ess}(|T|) = \{\beta\}$,  $\beta \geq 0$ and
	\begin{enumerate}
		\item[(i)] $V_0 = \displaystyle \bigoplus_{i=1}^{n} \alpha_i U_i$ is a finite rank normal operator such that $U_i$ is unitary on $N(|T| - \alpha_i I)$ if $(\beta, \|T\|] \cap \sigma(|T|) = \{\alpha_i\}_{i=1}^n$, $n \in \mathbb{N}$.
		
		If $\beta = \|T\|$, then $H_0 = \{0\}$.
		
		\item[(ii)]$V_1$ is an isometry on $H_1$ if $H_1 \neq \{0\}$, otherwise $V_1 =0$.
		\item [(iii)] $A \in \mathcal{B}(H_2, H_1)$ and $B \in \mathcal{B}(H_2)$ satisfy $V^*_1A = 0, A^*A +B^*B=  \displaystyle \bigoplus_{j=1}^{m} \beta^2_jI_{N(|T|-\beta_jI)}$ and $BB^* \leq  \displaystyle \bigoplus_{j=1}^{m}\beta^2_jI_{N(|T|-\beta_jI)}$, where $[m(T), \beta) \cap \sigma(|T|) = \{\beta_j\}_{j=1}^m$ for some $m \in \mathbb{N} \cup \{\infty\}$.
		
		If $\beta = m(T)$, then $H_2 = \{0\}$ and $ T = \|T\|V_1$ if $\|T\|= \beta = m(T)$.
	\end{enumerate}
\begin{proof}
	As $T \in \mathcal{AM}(H)$, we have $|T| \in \mathcal{AM}(H)$ by \cite[Theorem 5.14]{GAN}. Hence by \cite[Theorem 3.10]{BALA1}, $\sigma_{ess}(|T|)$ is a singleton set, say $\{\beta\}$ and $(\beta, \|T\|]$ has atmost finitely many spectral points of $|T|$.

	Let $(\beta, \|T\|] \cap \sigma(|T|) = \{\alpha_i\}_{i=1}^{n}, n \in \mathbb{N}$ and  $[m(T), \beta) \cap \sigma(|T|) = \{\beta_j\}_{j=1}^{m}, m \in \mathbb{N} \cup \{\infty\}$.

\hspace{1cm}
\begin{tikzpicture}
\draw[orange, very thick](5,-.3)--(5,.3);
\filldraw(5,-.3)  node[anchor=west] {$\alpha_1$};
\draw[very thick]  (-5,0)--(5,0);
\filldraw [gray] (0,0) circle (2.5pt);

\filldraw(0,-.3)  node[anchor=north] {$\beta$};
\draw [orange, very thick](-5,-.3)--(-5,.3);
\filldraw(-5,-.3) node[anchor=east] {$\beta_1$};
\draw (4,-.2)--(4,.2) node[anchor=south] {$\alpha_2$};
\filldraw(2.7,-.3)  node[anchor=north] {Finitely many};
\draw (2.2,-.2)--(2.2,.2) node[anchor=south] {$.$};
\draw (3,-.2)--(3,.2) node[anchor=south] {$.$};
\draw (3.5,-.2)--(3.5,.2) node[anchor=south] {$\alpha_3$};
\draw (2,-.2)--(2,.2) node[anchor=south] {$.$};
\draw (1,-.2)--(1,.2) node[anchor=south] {$\alpha_{n-1}$};
\draw (.3,-.2)--(.3,.2) node[anchor=south] {$\alpha_n$};

\draw (-.11,-.2)--(-.11,.2);
\draw (-.15,-.2)--(-.15,.2);
\draw (-.21,-.2)--(-.21,.2);
\draw (-.28,-.2)--(-.28,.2);
\draw (-.37,-.2)--(-.37,.2);
\draw (-.47,-.2)--(-.47,.2);	
\draw (-.58,-.2)--(-.58,.2);
\draw (-.7,-.2)--(-.7,.2);
\draw (-.83,-.2)--(-.83,.2);
\draw (-.97,-.2)--(-.97,.2);
\draw (-1.12,-.2)--(-1.12,.2)node[anchor=south] {$.$};	
\draw (-1.28,-.2)--(-1.28,.2)node[anchor=south] {$.$};
\draw (-1.45,-.2)--(-1.45,.2)node[anchor=south] {$.$};
\draw (-1.63,-.2)--(-1.63,.2)node[anchor=south] {$.$};
\draw (-1.82,-.2)--(-1.82,.2) node[anchor=south] {$.$};
\draw (-2.02,-.2)--(-2.02,.2) node[anchor=south] {$.$};
\draw (-2.5,-.2)--(-2.5,.2) node[anchor=south] {$\beta_4$};
\draw (-3.12,-.2)--(-3.12,.2) node[anchor=south] {$\beta_3$};
\draw (-3.9,-.2)--(-3.9,.2)node[anchor=south] {$\beta_2$};	
\end{tikzpicture}

\hspace{4cm} Spectral Diagram of $|T| \in \mathcal{AM}(H)$

\vspace{0.5cm} Without loss of generality let $\alpha_1 = \|T\|$.
Since $T$ is hyponormal, by \cite[Theorem 3]{LEE},  we have $N(|T|-\alpha_1 I)$ is invariant under $T$ as well as $V$. Moreover $\frac{T}{\alpha_1}$ is an isometry on $N(|T|-\alpha_1 I)$. As $\alpha _1 \in \pi_{00}(|T|)$, we get $N(|T|-\alpha_1 I)$ is finite dimensional. Hence $T = \alpha_1U_1$ on $N(|T|-\alpha_1 I)$, where $U_1 \in \mathcal{B}(N(|T|-\alpha_1 I))$ is a unitary operator. Therefore by \cite[Lemma 5]{STA}, $N(|T|-\alpha_1 I)$ reduces $T$. Now, $T|_{(N(|T|-\alpha_1 I))^\bot} \in \mathcal{AM}((N(|T|-\alpha_1 I))^\bot)$ with  $\|T|_{(N(|T|-\alpha_1 I))^\bot}\| = \alpha_2$. Continuing the same process as above, we get $T = \displaystyle \bigoplus_{i=1}^{n} \alpha_i U_i$ on $H_0 = \displaystyle \bigoplus_{i=1}^{n} N(|T|- \alpha_i I)$, $n \in \mathbb{N}$. Clearly $H_0$ is finite dimensional, hence $V_0 = T|_{H_0} = \displaystyle \bigoplus_{i=1}^{n} \alpha_i U_i$ is a finite rank normal operator. If $\beta = \|T\|$, then $(\beta, \|T\|] \cap \sigma(|T|)$ is empty, hence $H_0 = \{0\}$

Now, $T|_{{H_0}^\bot} \in \mathcal{AM}({H_0}^\bot)$ is hyponormal, with $\|T|_{{H_0}^\bot}\| = \beta$ and $\sigma_{ess}(T|_{{H_0}^\bot}) = \{\beta\}$. Therefore by Lemma \ref{hyponormal1}, we get $H^{\bot}_0 = H_1 \oplus H_2$, where $H_1 = N(|T|- \beta I)$ and  $H_2 = \displaystyle \bigoplus_{j = 1}^{m} N(|T|-\beta_j I)$, $m \in \mathbb{N} \cup \{\infty\}$ and with respect to this decomposition, $T|_{H^{\perp}_0}$ can be written as,

\[T|_{H^{\perp}_0} =
\begin{blockarray}{ccc}
H_1 & H_2 \\
\begin{block}{(cc)c}
\beta V_1 & A & H_ 1 \\
0 & B & H_2 \\
\end{block}
\end{blockarray},
\]
 where $V_1$ is an isometry on $H_1$ and the operators $A, B$ satisfies the required conditions from Lemma \ref{hyponormal1}. Clearly if $\beta \notin \sigma_{p}(|T|)$, then $H_1 = \{0\}$. If $[m(T), \beta) \cap \sigma(|T|)$ is empty, then $H_3 = \{0\}$. Moreover if $m(T) = \beta = \|T\|$, then $T = \|T\|V_1$. Hence we get the representation of $T$ as follows.

 	\begin{equation}
 T =
 \begin{blockarray}{cccc}
 H_0 & H_1 & H_2 \\
 \begin{block}{(ccc)c}
 V_0 & 0 & 0 & H_0 \\
 0 & \beta V_1 & A & H_1 \\
 0 & 0 & B & H_2\\
 \end{block}
 \end{blockarray}.
 \end{equation}

\end{proof}
	
\end{theorem}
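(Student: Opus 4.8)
The plan is to peel off the ``large'' eigenvalues of $|T|$ one at a time and reduce to the situation already handled in Lemma \ref{hyponormal1}. First I would record the spectral picture of the modulus: since $T \in \mathcal{AM}(H)$ we also have $|T| \in \mathcal{AM}(H)$ by \cite[Theorem 5.14]{GAN}, so by \cite[Theorem 3.10]{BALA1} the set $\sigma_{ess}(|T|)$ is a single point $\{\beta\}$ and $(\beta,\|T\|]\cap\sigma(|T|)$ is finite, say $\{\alpha_1,\dots,\alpha_n\}$ with $\alpha_1=\|T\|$; each $\alpha_i$ lies in $\pi_{00}(|T|)$, hence has a finite-dimensional eigenspace, whereas $[m(T),\beta)\cap\sigma(|T|)=\{\beta_j\}_{j=1}^{m}$ may be infinite, accumulating only at $\beta$.

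Next I would treat the top eigenvalue. Factor $T=V|T|$ with $V$ an isometry (as in \cite[Page 4]{XIA}). Since $T$ is hyponormal and $\|T\|\in\sigma_p(|T|)$, \cite[Theorem 3]{LEE} gives that $N(|T|-\|T\|I)$ is invariant under $T$ and that $T/\|T\|$ restricted there is an isometry; being an isometry on a finite-dimensional space, it is unitary. A hyponormal operator whose restriction to a finite-dimensional invariant subspace is normal reduces that subspace by \cite[Lemma 5]{STA}, so $N(|T|-\|T\|I)$ reduces $T$, and there $T=\|T\|U_1$ with $U_1$ unitary. Iterating: $T|_{N(|T|-\|T\|I)^\perp}$ is again an $\mathcal{AM}$, hyponormal operator (both properties pass to restrictions to reducing subspaces), now with norm $\alpha_2$; after $n$ steps this peels off all the $\alpha_i$, producing a finite-dimensional reducing subspace $H_0=\bigoplus_{i=1}^{n}N(|T|-\alpha_i I)$ on which $T$ acts as the finite rank normal operator $V_0=\bigoplus_{i=1}^{n}\alpha_i U_i$; if $\beta=\|T\|$ this set is empty and $H_0=\{0\}$.

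On the complement $H_0^\perp$ the operator $T|_{H_0^\perp}$ is $\mathcal{AM}$, hyponormal, with $\|T|_{H_0^\perp}\|=\beta=\sigma_{ess}(|T|_{H_0^\perp}|)$, which is precisely the hypothesis of Lemma \ref{hyponormal1}. Applying that lemma yields $H_0^\perp=H_1\oplus H_2$ with $H_1=N(|T|-\beta I)$ and $H_2=\bigoplus_{j=1}^{m}N(|T|-\beta_j I)$, the block form with an isometry $\beta V_1$ on $H_1$, and operators $A\in\mathcal{B}(H_2,H_1)$, $B\in\mathcal{B}(H_2)$ satisfying $V_1^*A=0$, $A^*A+B^*B=\bigoplus_{j}\beta_j^2 I$ and $BB^*\le\bigoplus_{j}\beta_j^2 I$. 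Assembling the two pieces and disposing of the degenerate cases ($\beta\notin\sigma_p(|T|)$ forces $H_1=\{0\}$; $[m(T),\beta)\cap\sigma(|T|)=\emptyset$ forces $H_2=\{0\}$; $m(T)=\beta=\|T\|$ gives $T=\|T\|V_1$) produces the displayed representation.

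The step I expect to be the main obstacle is the ``peeling'' induction: I must be careful that after removing $N(|T|-\alpha_1 I)$ the restriction is still absolutely minimum attaining with essential spectrum of its modulus still $\{\beta\}$ and norm exactly $\alpha_{i+1}$, so that the procedure is well defined and terminates after the finitely many $\alpha_i$. The genuinely delicate point is upgrading \emph{invariance} of each finite-dimensional eigenspace to \emph{reduction}, via the hyponormal-plus-normal-restriction argument of \cite[Lemma 5]{STA}; this is what makes the top block of the matrix truly block-diagonal. Once the reduction to $H_0^\perp$ is justified, the remainder is bookkeeping plus the invocation of Lemma \ref{hyponormal1}.
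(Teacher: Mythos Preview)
Your proposal is correct and follows essentially the same approach as the paper's own proof: peel off the finitely many eigenspaces for the $\alpha_i>\beta$ one by one using \cite[Theorem 3]{LEE} for invariance, finite-dimensionality to upgrade the isometry to a unitary, and \cite[Lemma 5]{STA} to obtain reduction, then invoke Lemma \ref{hyponormal1} on $H_0^\perp$. The references, the order of the argument, and the handling of the degenerate cases all match the paper's proof.
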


Now, we look at a representation of hyponormal operators in $\overline{\mathcal{AN}(H)}$ which contains both hyponormal $\mathcal{AN}$-operators and hyponormal $\mathcal{AM}$-operators.

\begin{theorem} \label{hypo2}
	Let $T \in \overline{\mathcal{AN}(H)}$ be hyponormal. Then there exists subspaces $H_0, H_1, H_2$ such that $H = H_0 \oplus H_1  \oplus H_2$ and $T$ has the following representation.
	
		\begin{equation} \label{Thypo}
	T =
	\begin{blockarray}{cccc}
	H_0 & H_1 & H_2 \\
	\begin{block}{(ccc)c}
	V_0 & 0 & 0 & H_0 \\
	0 & \alpha V_1 & A & H_1 \\
	0 & 0 & B & H_2\\
	\end{block}
	\end{blockarray},
		\end{equation}

where $\sigma_{ess}(|T|) = \{\alpha\}$,  $\alpha \geq 0$ and
\begin{enumerate}
	\item[(i)] $V_0 = \displaystyle \bigoplus_{i=1}^{n} \alpha_i U_i$, where $U_i$ is unitary on $N(|T| - \alpha_i I)$ if $(\alpha, \|T\|] \cap \sigma(|T|) = \{\alpha_i\}_{i=1}^n$ for some $n \in \mathbb{N} \cup \{\infty\}$.

	If $\alpha = \|T\|$, then $H_0 = \{0\}$.
	
\item[(ii)]$V_1$ is an isometry on $H_1$ if $H_1 \neq \{0\}$, otherwise $V_1 =0$.
\item [(iii)] $A \in \mathcal{B}(H_2, H_1)$ and $B \in \mathcal{B}(H_2)$ satisfy $V^*_1A = 0, A^*A +B^*B= \displaystyle \bigoplus_{j=1}^{m} \beta^2_jI_{N(|T| - \beta_j I)}$ and $BB^*= \displaystyle \bigoplus_{j=1}^{m} \beta^2_jI_{N(|T| - \beta_j I)}$, where $[m(T), \alpha) \cap \sigma(|T|) = \{\beta_j\}_{j=1}^m$ for some $m \in \mathbb{N} \cup \{\infty\}$.

If $\alpha = m(T)$, then $H_2 = \{0\}$ and $ T = \|T\|V_1$ if $\|T\|= \alpha = m(T)$.
\end{enumerate}
\end{theorem}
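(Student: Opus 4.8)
The plan is to follow the architecture of the proof of Theorem~\ref{AMHypo}, the one genuinely new feature being that for $T \in \overline{\mathcal{AN}(H)}$ the modulus $|T|$ may have infinitely many eigenvalues above its essential value $\alpha$. First I would take the polar decomposition $T = W|T|$ and, since $|T| \in \overline{\mathcal{AN}(H)}_{+}$ by \cite[Lemma 3.13]{RAMSSS}, invoke Theorem~\ref{positivechar} to write $|T| = \alpha I - K_1 + K_2$ with $\alpha \ge 0$, $K_1,K_2 \in \mathcal{K}(H)_{+}$, $K_1K_2 = 0$, $K_1 \le \alpha I$. If $\alpha = 0$ then $|T|$, hence $T$, is compact, so $T$ is normal by Theorem~\ref{compacthypo} and the representation is the familiar one for normal compact operators ($H_2 = \{0\}$, $H_1 = N(T)$). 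Assuming $\alpha > 0$, Proposition~\ref{prop1} shows $|T|$ equals $\alpha I_{N(K_1)} + \widetilde{K_2}$ on $N(K_1)$ and $\alpha I_{N(K_1)^{\bot}} - \widetilde{K_1}$ on $N(K_1)^{\bot}$, each a compact perturbation of a scalar, hence diagonalisable; so $|T|$ is diagonalisable with $\sigma(|T|) = \{\alpha\} \cup \{\alpha_i\} \cup \{\beta_j\}$, where $\{\alpha_i\} = (\alpha,\|T\|]\cap\sigma(|T|)$ and $\{\beta_j\} = [m(T),\alpha)\cap\sigma(|T|)$ (each of finite multiplicity, possibly infinitely many, $\alpha_i \to \alpha$ from above and $\beta_j \to \alpha$ from below). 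Setting $H_0 = \bigoplus_i N(|T|-\alpha_i I) = \overline{R(K_2)}$, $H_1 = N(|T|-\alpha I) = N(K_1)\cap N(K_2)$ and $H_2 = \bigoplus_j N(|T|-\beta_j I) = \overline{R(K_1)}$, one gets $H = H_0 \oplus H_1 \oplus H_2$ with each summand reducing $|T|$.

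Next I would peel off the part of $T$ carried by the eigenvalues above $\alpha$. Assume $H_0 \ne \{0\}$, so $\|T\| = \alpha_1$. By \cite[Theorem 3]{LEE} the space $N(|T|-\alpha_1 I)$ is invariant under $T$ with $\alpha_1^{-1}T$ isometric on it; being finite dimensional ($\alpha_1 \in \pi_{00}(|T|)$), $T$ acts there as $\alpha_1 U_1$ with $U_1$ unitary, so by \cite[Lemma 5]{STA} the space reduces $T$. Restricting $T$ to its orthogonal complement again gives a hyponormal operator whose modulus is $|T|$ restricted to that reducing subspace, with norm $\alpha_2 \in \pi_{00}$; iterating, each $N(|T|-\alpha_i I)$, $1 \le i \le n$, reduces $T$. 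Since a countable orthogonal sum of reducing subspaces is reducing, $H_0$ reduces $T$, and for $x = \sum_i x_i$ with $x_i \in N(|T|-\alpha_i I)$ one has $Tx = \sum_i \alpha_i U_i x_i$, so $T|_{H_0} = \bigoplus_{i=1}^{n}\alpha_i U_i =: V_0$ (finite rank and normal precisely when $n < \infty$). If $\alpha = \|T\|$ then $H_0 = \{0\}$ and $V_0 = 0$.

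It then remains to analyse $S := T|_{H_0^{\bot}}$ on $H_0^{\bot} = H_1 \oplus H_2$. It is hyponormal and its modulus is $|T|$ restricted to $H_0^{\bot}$, namely $\alpha I_{H_1} \oplus (\alpha I_{H_2} - K_1|_{H_2})$, i.e.\ $\alpha I$ minus a compact positive operator; thus $\sigma_{ess}(|S|) = \{\alpha\} = \{\|S\|\}$ and $|S|$ has no spectral points above $\|S\|$, so $|S|$---and hence $S$---is an $\mathcal{AM}$-operator on $H_0^{\bot}$ by \cite[Theorem 3.10]{BALA1} and \cite[Theorem 5.14]{GAN}. If $\alpha \in \sigma_p(|S|)$, i.e.\ $H_1 \ne \{0\}$, then Lemma~\ref{hyponormal1} applies directly and gives the $2\times 2$ block form of $S$ on $H_1 \oplus H_2$ with diagonal entries $\alpha V_1$ ($V_1$ an isometry on $H_1$) and $B \in \mathcal{B}(H_2)$, off-diagonal entry $A \in \mathcal{B}(H_2,H_1)$, and the relations $V_1^{*}A = 0$, $A^{*}A + B^{*}B = \bigoplus_j \beta_j^{2} I_{N(|T|-\beta_j I)}$ and $BB^{*} \le \bigoplus_j \beta_j^{2} I_{N(|T|-\beta_j I)}$, exactly as in that lemma. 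If $\alpha \notin \sigma_p(|S|)$, i.e.\ $H_1 = \{0\}$ and $H_0^{\bot} = H_2$, then Lemma~\ref{hyponormal1} is not literally applicable, but I would simply set $V_1 = 0$, $A = 0$, $B = S$ and read off $B^{*}B = |S|^{2} = \bigoplus_j \beta_j^{2} I_{N(|T|-\beta_j I)}$ together with $BB^{*} = SS^{*} \le S^{*}S = B^{*}B$ from hyponormality. Reassembling $V_0$, $\alpha V_1$, $A$, $B$ over $H = H_0 \oplus H_1 \oplus H_2$ then yields the asserted representation, and the degenerate cases ($H_0 = \{0\}$ when $\alpha = \|T\|$; $H_2 = \{0\}$ when $\alpha = m(T)$; $T = \|T\|V_1$ when $\|T\| = \alpha = m(T)$) are immediate.

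The steps I expect to need the most care are the following. First, making the peeling-off in the second step legitimate when $n = \infty$: this is handled with no limiting argument, by the observation that each $N(|T|-\alpha_i I)$ reduces $T$ and that a countable orthogonal sum of reducing subspaces is reducing, so $H_0$ reduces $T$ and $T|_{H_0}$ is the corresponding orthogonal direct sum. Second, confirming that $S = T|_{H_0^{\bot}}$ is genuinely an $\mathcal{AM}$-operator so that Lemma~\ref{hyponormal1} may be quoted; this goes through the essential-spectrum characterisations of Theorems~\ref{positiveessentialspectrum} and \ref{positivechar} applied to the restricted modulus. Third, the sub-case $\alpha \notin \sigma_p(|T|)$, which Lemma~\ref{hyponormal1} does not cover and which needs the short separate computation indicated above. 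The hyponormality-specific input---that top eigenspaces of $|T|$ reduce $T$---is supplied entirely by \cite[Theorem 3]{LEE} and \cite[Lemma 5]{STA}, exactly as in the proofs of Lemma~\ref{hyponormal1} and Theorem~\ref{AMHypo}.
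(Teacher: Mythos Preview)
Your proposal is correct and follows essentially the same route as the paper: diagonalise $|T|$ via its $\overline{\mathcal{AN}}$-structure, peel off the eigenspaces $N(|T|-\alpha_i I)$ above $\alpha$ using \cite[Theorem 3]{LEE} and \cite[Lemma 5]{STA} to obtain the reducing normal block $V_0$, and then apply Lemma~\ref{hyponormal1} to the hyponormal $\mathcal{AM}$-restriction $T|_{H_0^{\bot}}$. The paper organises this into three cases (finitely many $\beta_j$, finitely many $\alpha_i$, both infinite) so as to quote \cite[Theorem 3.5]{BALA2} and Theorem~\ref{AMHypo} directly in the first two, whereas you give a single unified argument that is in effect the paper's Case~(3); you are also slightly more explicit than the paper in handling the sub-case $\alpha\notin\sigma_p(|T|)$, where Lemma~\ref{hyponormal1} does not literally apply.
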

\begin{proof}
Since $T \in \overline{\mathcal{AN}(H)}$, we have $|T| \in \overline{\mathcal{AN}(H)}$ by \cite[Lemma 3.13]{RAMSSS}. Hence  by Theorem \ref{positiveessentialspectrum}, we have $\sigma_{ess}(|T|)$ is a single point, say $\{\alpha\}, \alpha \geq 0$. So $m_e(|T|) = \alpha$,  $(\alpha, \|T\|] \cap \sigma(|T|)$ is atmost countable say $\{\alpha_i\}_{i=1}^{n}$ for some $n \in \mathbb{N} \cup \{\infty\}$ and  $[m(T), \alpha) \cap \sigma(|T|)$ is also atmost countable say $\{\beta_j\}_{j=1}^{m}$ for some $m \in \mathbb{N} \cup \{\infty\}$ .

\hspace{1cm}
\begin{tikzpicture}
\draw[orange, very thick](5,-.3)--(5,.3);
\filldraw(5,-.3)  node[anchor=west] {$\alpha_1$};
\draw[very thick]  (-5,0)--(5,0);
\filldraw [gray] (0,0) circle (2.5pt);
\filldraw(0,-.3)  node[anchor=north] {$\alpha$};
\draw [orange, very thick](-5,-.3)--(-5,.3);
\filldraw(-5,-.3) node[anchor=east] {$\beta_1$};
\draw (-.11,-.2)--(-.11,.2);
\draw (-.15,-.2)--(-.15,.2);
\draw (-.21,-.2)--(-.21,.2);
\draw (-.28,-.2)--(-.28,.2);
\draw (-.37,-.2)--(-.37,.2);
\draw (-.47,-.2)--(-.47,.2);	
\draw (-.58,-.2)--(-.58,.2);
\draw (-.7,-.2)--(-.7,.2);
\draw (-.83,-.2)--(-.83,.2);
\draw (-.97,-.2)--(-.97,.2);
\draw (-1.12,-.2)--(-1.12,.2) node[anchor=south] {.};	
\draw (-1.28,-.2)--(-1.28,.2) node[anchor=south] {.};
\draw (-1.45,-.2)--(-1.45,.2) node[anchor=south] {.};
\draw (-1.63,-.2)--(-1.63,.2) node[anchor=south] {.};
\draw (-1.82,-.2)--(-1.82,.2) node[anchor=south] {.};
\draw (-2.02,-.2)--(-2.02,.2) node[anchor=south] {$\beta_5$};
\draw (-2.5,-.2)--(-2.5,.2) node[anchor=south] {$\beta_4$};
\draw (-3.12,-.2)--(-3.12,.2) node[anchor=south] {$\beta_3$};
\draw (-3.7,-.2)--(-3.7,.2) node[anchor=south] {$\beta_2$};

\draw (.10,-.2)--(.10,.2);
\draw (.15,-.2)--(.15,.2);
\draw (.21,-.2)--(.21,.2);
\draw (.28,-.2)--(.28,.2);
\draw (.37,-.2)--(.37,.2);
\draw (.47,-.2)--(.47,.2);	
\draw (.58,-.2)--(.58,.2);
\draw (.7,-.2)--(.7,.2);
\draw (.83,-.2)--(.83,.2) node[anchor=south] {$.$};
\draw (.97,-.2)--(.97,.2) node[anchor=south] {$.$};
\draw (1.12,-.2)--(1.12,.2)node[anchor=south] {$.$};	
\draw (1.28,-.2)--(1.28,.2) node[anchor=south] {$.$};
\draw (1.45,-.2)--(1.45,.2) node[anchor=south] {$.$};
\draw (1.63,-.2)--(1.63,.2) node[anchor=south] {$.$};
\draw (1.82,-.2)--(1.82,.2) node[anchor=south] {$.$};
\draw (2.02,-.2)--(2.02,.2) node[anchor=south] {$\alpha_5$};
\draw (2.5,-.2)--(2.5,.2) node[anchor=south] {$\alpha_4$};
\draw (3.12,-.2)--(3.12,.2) node[anchor=south] {$\alpha_3$};
\draw (3.9,-.2)--(3.9,.2) node[anchor=south] {$\alpha_2$};
\end{tikzpicture}

\hspace{4cm} Spectral Diagram of $|T| \in \overline{\mathcal{AN}(H)}$

 Let $H_0 = \displaystyle \bigoplus_{i=1}^{n} N(|T|- \alpha_i I)$, $H_1 = N(|T|- \alpha I)$ and $H_2 = \displaystyle \bigoplus_{j=1}^{m} N(|T|- \beta_j I)$.

We now consider the following cases which exhaust all the possibilities.

Case$(1)$ $[m(T), \alpha) \cap \sigma(|T|)$ is finite.\\ In this case, $H_2$ is finite dimensional and by \cite[Theorem 2.4]{RAM2}, $T \in \mathcal{AN}(H)$. Hence the required representation of $T$ is obtained from \cite[Theorem 3.5]{BALA2}.

Case$(2)$ $(\alpha, \|T\|] \cap \sigma(|T|)$ is finite.\\ In this case, $H_0$ is finite dimensional and $T \in \mathcal{AM}(H)$ by \cite[Theorem 3.10]{BALA2}. Hence by Theorem \ref{AMHypo}, we get the required structure of $T$.

Case$(3)$ Both $[m(T), \alpha) \cap \sigma(|T|)$ and $(\alpha, \|T\|] \cap \sigma(|T|)$ are countably infinite.

 Following the same steps as in Theorem \ref{AMHypo}, we get $H_0$ reduces $T$, where $\sigma(|T|_{H_0}|) = \{\alpha_i\}_{i=1}^{\infty} \cup \{\alpha\}$ with $\sigma_{ess}(|T|_{H_0}|) = \{\alpha\}$ and $ V_0 = T|_{H_0} = \displaystyle \bigoplus_{i=1}^{\infty} \alpha_i U_i $,
where $U_i\in \mathcal{B}(N(|T|-\alpha_i I))$ is a unitary operator. Hence we get

  \begin{equation*}
  T = \begin{blockarray}{ccc}
  H_0 & H^\bot_0\\
  \begin{block}{(cc)c}
  V_0 & 0& H_0\\
  0 & T_1& H^\bot_0\\
  \end{block}
  \end{blockarray},
  \end{equation*}
  where $T_1 = T|_{H^\bot_0} \in \overline{\mathcal{AN}(H^\perp_0)}$  is hyponormal and $\sigma(|T_1|) = \{\beta_j\}_{j=1}^{\infty} \cup \{\alpha\}$ with $\|T|_{H^\bot_0}\| = \alpha$. Therefore $T_1 \in \mathcal{AM}(H^\perp_0)$ by \cite[Theorem 3.10]{BALA1}. Hence by Lemma \ref{hyponormal1}, we get

\[T_1 =
\begin{blockarray}{ccc}
H_1 & H_2 \\
\begin{block}{(cc)c}
\alpha V_1 & A & H_ 1 \\
0 & B & H_2 \\
\end{block}
\end{blockarray},
\]
where $V_1, A$ and $B$ satisfies conditions $(ii)$ and $(iii)$.

Therefore we get
\begin{equation*}
T =
\begin{blockarray}{cccc}
H_0 & H_1 & H_2 \\
\begin{block}{(ccc)c}
V_0 & 0 & 0 & H_0 \\
0 & \alpha V_1 & A & H_1 \\
0 & 0 & B & H_2\\
\end{block}
\end{blockarray}.
\end{equation*}
\end{proof}
\begin{corollary} \label{directsumANandAM}
Let $T \in \overline{\mathcal{AN}(H)}$ be hyponormal. Then $T$ can be written as $T = T_1 \oplus T_2$ on $H = H_0 \oplus H^\bot_0$ such that $T_1 \in \mathcal{AN}(H_0)$ is normal and $T_2 \in \mathcal{AM}(H^\bot_0)$ is hyponormal.
\end{corollary}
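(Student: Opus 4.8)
The plan is to read off the claimed direct sum decomposition directly from Theorem \ref{hypo2}. Given a hyponormal $T \in \overline{\mathcal{AN}(H)}$, Theorem \ref{hypo2} provides subspaces $H_0, H_1, H_2$ with $H = H_0 \oplus H_1 \oplus H_2$, and with respect to this decomposition $T$ has the block form in \eqref{Thypo}, namely $T = V_0 \oplus \begin{pmatrix} \alpha V_1 & A \\ 0 & B \end{pmatrix}$ where the first summand acts on $H_0$ and the second on $H_1 \oplus H_2 = H_0^\bot$. Set $T_1 := V_0 = T|_{H_0}$ and $T_2 := T|_{H_0^\bot}$, the operator with the $2\times 2$ block form on $H_1 \oplus H_2$. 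The block structure shows $H_0$ reduces $T$, so $T = T_1 \oplus T_2$ as desired.

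Next I would verify the two assertions about $T_1$ and $T_2$. For $T_1$: by Theorem \ref{hypo2}(i), $V_0 = \bigoplus_{i=1}^n \alpha_i U_i$ where each $U_i$ is unitary on $N(|T|-\alpha_i I)$ and $(\alpha, \|T\|] \cap \sigma(|T|) = \{\alpha_i\}$. A direct sum of scalar multiples of unitaries is normal, so $T_1$ is normal. To see $T_1 \in \mathcal{AN}(H_0)$: $T_1$ is normal with $|T_1| = \bigoplus_i \alpha_i I_{N(|T|-\alpha_i I)}$, whose spectrum is $\{\alpha_i\}_{i=1}^n \cup \{\alpha\}$ (the latter appearing as $\sigma_{ess}$) with the $\alpha_i$ decreasing to $\alpha$ and each having, in general, an eigenspace that need not be finite-dimensional; since $\sigma_{ess}(|T_1|) = \{\alpha\}$ is a singleton and $[m(T_1), \alpha) \cap \sigma(|T_1|) = \emptyset$ is (trivially) finite, the characterization \cite[Theorem 2.4]{RAM2} of $\mathcal{AN}$-operators via the spectrum of the modulus gives $T_1 \in \mathcal{AN}(H_0)$. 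For $T_2$: it is hyponormal because compression of a hyponormal operator to a reducing subspace stays hyponormal (equivalently, it inherits hyponormality from the proof of Theorem \ref{hypo2}, where $T_1 = T|_{H_0^\bot}$ is shown to be hyponormal in Case~$(3)$, and in Cases~$(1)$,~$(2)$ the corresponding restriction is again hyponormal). And $T_2 \in \mathcal{AM}(H_0^\bot)$: its modulus has spectrum $\{\beta_j\}_{j=1}^m \cup \{\alpha\}$ with $\|T_2\| = \alpha = \sup \sigma(|T_2|)$ and $\sigma_{ess}(|T_2|) = \{\alpha\}$, so $(\alpha, \|T_2\|] \cap \sigma(|T_2|) = \emptyset$ is finite, and \cite[Theorem 3.10]{BALA1} (the dual characterization for $\mathcal{AM}$-operators) yields $T_2 \in \mathcal{AM}(H_0^\bot)$.

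The only mild subtlety — and the step I'd be most careful about — is the bookkeeping across the degenerate cases. When $\alpha = \|T\|$, we have $H_0 = \{0\}$, so $T_1 = 0$ on the trivial space (vacuously normal and in $\mathcal{AN}$) and $T = T_2$ with $T_2 \in \mathcal{AM}(H)$ hyponormal, which is exactly what Theorem \ref{AMHypo} already gives. When $H_0$ is finite-dimensional (Case~$(2)$ of Theorem \ref{hypo2}), $T_1$ is a finite-rank normal operator, which is automatically $\mathcal{AN}$ since every operator on a finite-dimensional space is $\mathcal{AN}$. When $(\alpha, \|T\|]\cap\sigma(|T|)$ is empty with $\alpha \in \sigma_p(|T|)$ of infinite multiplicity, $T_1$ acts on an infinite-dimensional space but is still a direct sum of scalar unitaries, hence normal and $\mathcal{AN}$. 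So in every case the decomposition and the membership claims hold; the corollary is essentially a restatement of Theorem \ref{hypo2} after grouping the blocks as $H_0$ versus $H_0^\bot = H_1 \oplus H_2$, and I would present it in two or three short sentences invoking Theorem \ref{hypo2}, \cite[Theorem 2.4]{RAM2}, and \cite[Theorem 3.10]{BALA1}.
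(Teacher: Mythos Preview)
Your approach is essentially the paper's: it too just reads off $T_1 = V_0$ and $T_2 = \begin{pmatrix}\alpha V_1 & A\\ 0 & B\end{pmatrix}$ from \eqref{Thypo} and declares the memberships clear, while you supply the spectral justifications via \cite[Theorem 2.4]{RAM2} and \cite[Theorem 3.10]{BALA1}. Two small slips to fix: the eigenspaces $N(|T|-\alpha_i I)$ for $\alpha_i\in(\alpha,\|T\|]$ \emph{are} finite-dimensional (they lie in $\pi_{00}(|T|)$ since $\sigma_{ess}(|T|)=\{\alpha\}$), and in your final degenerate case, if $(\alpha,\|T\|]\cap\sigma(|T|)=\emptyset$ then $H_0=\{0\}$ regardless of whether $\alpha\in\sigma_p(|T|)$---the eigenspace $N(|T|-\alpha I)$ is $H_1$, not part of $H_0$.
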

\begin{proof}
	From the representation of $T$ as in \eqref{Thypo}, it is clear that $T_1 = V_0 \in \mathcal{AN}(H_0)$ is normal and
	$T_2 =
\begin{pmatrix}
\alpha V_1 & A  \\
0 & B  \\
\end{pmatrix} \in \mathcal{AM}(H^\bot_0)$
 is hyponormal.
\end{proof}

Next we give an example of a hyponormal operator in $\overline{\mathcal{AN}(H)}$ to illustrate Theorem \ref{hypo2}.

\begin{example}
	Let $T : l^2(\mathbb{N}) \to l^2(\mathbb{N})$ be defined by
	\begin{equation*}
	T(x_1, x_2, x_3, \dots) = \left(\sqrt{\left(1-\frac{1}{2}\right)}x_2, 0, x_1, 0, x_3, \sqrt{\left(1-\frac{1}{4}\right)}x_4, x_5,  \sqrt{\left(1-\frac{1}{6}\right)}x_6, \dots\right),\ \forall \ (x_n) \in l^2(\mathbb{N}) .
	\end{equation*}
	Then
	\begin{equation*}
	T^*(x_1, x_2, x_3, \dots) = \left(x_3, \sqrt{\left(1-\frac{1}{2}\right)}x_1, x_5, \sqrt{\left(1-\frac{1}{4}\right)}x_6, x_7, \sqrt{\left(1-\frac{1}{6}\right)}x_8, \dots\right) ,\ \forall \ (x_n) \in l^2(\mathbb{N}).
	\end{equation*}
	Clearly $\|T^*x\| \leq \|Tx\|$, $\forall \ x  \in l^2(\mathbb{N})$. Hence $T$ is hyponormal. Also,
	$$T^*T(x_1, x_2, x_3, \dots) = \left(x_1,\left(1-\frac{1}{2}\right)x_2,x_3,\left(1-\frac{1}{4}\right)x_4, x_5, \left(1-\frac{1}{6}\right)x_6,\dots\right) ,\ \forall \ (x_n) \in l^2(\mathbb{N}). $$
	Therefore $T^*T = I-K \in  \overline{\mathcal{AN}(H)}$, where $$K(x_1, x_2, x_3, \dots) = \left(0,\frac{x_2}{2},0,\frac{x_4}{4},0,\frac{x_6}{6},  \dots\right), \forall \ (x_n) \in l^2(\mathbb{N}).$$
	
Hence by \cite[Theorem 3.14]{RAMSSS}, $T \in \overline{\mathcal{AN}(H)}$.

	Let $H_1 := \overline{\spn}\{e_1, e_3, e_5, \dots\}$ and $H_2 := \overline{\spn}\{e_2,e_4, e_6 \dots\}$. Then
	\[T=
	\begin{blockarray}{ccc}
	H_1& H_2\\
	\begin{block}{(c|c)c}
	V_1&A& H_1\\
	\hhline{--}0&B&H_2\\
	\end{block}
	\end{blockarray},
	\]
	where $V_1(x_1,x_2,\ldots)=(0,x_1,x_2,\dots), \forall \ (x_n) \in H_1$, \\$B(y_1,y_2,\dots)=\left(0, 0,\sqrt{\left(1-\frac{1}{4}\right)}y_2, \sqrt{\left(1-\frac{1}{6}\right)}y_3 \dots\right), \forall \ (y_n) \in H_2$ and
	
	\noindent
	 $A(y_1,y_2, \dots) = \left(\sqrt{\left(1-\frac{1}{2}\right)}y_1, 0 , \dots\right), \forall \ (y_n) \in H_2$.
	 
\noindent
Now, $BB^*(y_1, y_2,y_3\dots)	= (0,0,\left(1-\frac{1}{4}\right)y_3,\left(1-\frac{1}{6}\right)y_4,\dots), \forall \ (y_n) \in H_2$.

\noindent
 Hence $BB^* \leq \displaystyle \bigoplus_{n \in \mathbb{N}}\left(1-\frac{1}{2n}\right) I_{N\left(|T|-\sqrt{\left(1-\frac{1}{2n}\right)}\right)}$.
\end{example}

\begin{corollary}
	Let $T \in \overline{\mathcal{AN}(H)}$ be such that $T^*$ is hyponormal. Then there exists subspaces $H_0, H_1$ and $H_2$ such that $H = H_0 \oplus H_1 \oplus H_2$, with respect to this decomposition, $T$ can be written as
	
	\begin{equation}
	T =
	\begin{blockarray}{cccc}
	H_0 & H_1 & H_2 \\
	\begin{block}{(ccc)c}
	S_0 & 0 & 0 & H_0 \\
	0 & \alpha S_1 & 0 & H_1 \\
	0 & A_1 & B_1 & H_2 \\
	\end{block}
	\end{blockarray},
	\end{equation}
\end{corollary}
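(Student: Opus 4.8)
The plan is to deduce the result from Theorem~\ref{hypo2} applied to $T^*$, so the first (and only non-routine) task is to verify that $T^*\in\overline{\mathcal{AN}(H)}$. This is \emph{not} automatic: $\overline{\mathcal{AN}(H)}$ is not closed under taking adjoints in general (an isometry with infinite-dimensional cokernel lies in $\mathcal{AN}(H)$ while its adjoint does not), so the hypothesis that $T^*$ is hyponormal has to enter here. Recall that $T^*$ hyponormal means $TT^*\geq T^*T\geq 0$; in particular $N(T^*)\subseteq N(T)$, and this inclusion is the key structural consequence I will exploit.

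First I would record that $|T|\in\overline{\mathcal{AN}(H)}_{+}$ by \cite[Lemma~3.13]{RAMSSS}, so by Theorem~\ref{positiveessentialspectrum} the set $\sigma_{ess}(|T|)$ is a singleton $\{\alpha\}$, $\alpha\geq 0$, and hence $\sigma_{ess}(T^*T)=\{\alpha^2\}$. If $\alpha=0$, then by Theorem~\ref{positivechar} applied to $T^*T$ one checks $T^*T$ is compact, hence so are $|T|=(T^*T)^{1/2}$ and $T=W|T|$ and $T^*$, so $T^*\in\mathcal{K}(H)\subseteq\overline{\mathcal{AN}(H)}$ trivially. If $\alpha>0$, then $T^*T$ is Fredholm, so $R(T)$ is closed and $N(T)$ is finite-dimensional; consequently $N(T^*)\subseteq N(T)$ is finite-dimensional and $R(T^*)$, and therefore $R(TT^*)=R(T)$, is closed. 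Since the nonzero eigenvalues of $TT^*$ coincide with those of $T^*T$ and hence accumulate only at $\alpha^2\neq 0$, and since $N(TT^*)=N(T^*)$ is finite-dimensional and $R(TT^*)$ is closed, none of the three conditions of Theorem~\ref{ess spectrum of self-adjoint} can hold for $\lambda=0$ and the self-adjoint operator $TT^*$; thus $0\notin\sigma_{ess}(TT^*)$ and so $\sigma_{ess}(TT^*)=\{\alpha^2\}$ is a singleton. By Theorem~\ref{positiveessentialspectrum}, $|T^*|=(TT^*)^{1/2}\in\overline{\mathcal{AN}(H)}_{+}$, and therefore $T^*\in\overline{\mathcal{AN}(H)}$ by \cite[Theorem~3.14]{RAMSSS}.

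With $T^*\in\overline{\mathcal{AN}(H)}$ hyponormal established, I would apply Theorem~\ref{hypo2} to $T^*$: there is a decomposition $H=H_0\oplus H_1\oplus H_2$ and a $3\times 3$ block representation of $T^*$ of the form displayed there, with $V_0=\bigoplus_i \alpha_i U_i$ ($U_i$ unitary), $V_1$ an isometry on $H_1$, and $A\in\mathcal{B}(H_2,H_1)$, $B\in\mathcal{B}(H_2)$ satisfying $V_1^*A=0$, $A^*A+B^*B=\bigoplus_j\beta_j^2 I$ and $BB^*=\bigoplus_j\beta_j^2 I$. Taking the adjoint of this operator matrix turns the $(2,3)$-block into $0$ and the $(3,2)$-block into $A^*$, which is precisely the asserted form for $T=(T^*)^*$ with $S_0=V_0^*$ (still normal), $S_1=V_1^*$ (a coisometry on $H_1$), $A_1=A^*\in\mathcal{B}(H_1,H_2)$ and $B_1=B^*\in\mathcal{B}(H_2)$; the relations on $A,B$ translate into $S_1A_1^*=0$, $A_1A_1^*+B_1B_1^*=\bigoplus_j\beta_j^2 I$ and $B_1^*B_1=\bigoplus_j\beta_j^2 I$.

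The main obstacle is the second paragraph: passing from $T\in\overline{\mathcal{AN}(H)}$ to $T^*\in\overline{\mathcal{AN}(H)}$ genuinely fails for general $T$, and the delicate point is ruling out $0\in\sigma_{ess}(TT^*)$ — which is exactly where cohyponormality, through $N(T^*)\subseteq N(T)$, is indispensable. Once that is in place, the rest is a direct invocation of Theorem~\ref{hypo2} together with a routine bookkeeping of adjoints of block operator matrices.
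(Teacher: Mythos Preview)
Your proposal is correct and follows essentially the same route as the paper: both arguments first establish $T^*\in\overline{\mathcal{AN}(H)}$ by showing $\sigma_{ess}(TT^*)$ is the singleton $\{\alpha^2\}$ (using cohyponormality via $N(T^*)\subseteq N(T)$ to rule out $0\in\sigma_{ess}(TT^*)$ when $\alpha\neq 0$), and then apply Theorem~\ref{hypo2} to $T^*$ and take adjoints. The only cosmetic differences are that the paper invokes $\sigma_{ess}(T^*T)\setminus\{0\}=\sigma_{ess}(TT^*)\setminus\{0\}$ directly (via \cite[Theorem~6, p.~173]{MUL}) and cites \cite[Theorem~4.12]{RAMSSS} for the final step, whereas you argue from Theorem~\ref{ess spectrum of self-adjoint} and cite \cite[Theorem~3.14]{RAMSSS}; also note that the condition you transcribed as $BB^*=\bigoplus_j\beta_j^2 I$ should be $BB^*\leq\bigoplus_j\beta_j^2 I$ (so $B_1^*B_1\leq\bigoplus_j\beta_j^2 I$), matching Lemma~\ref{hyponormal1} and the corollary's own statement.
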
	

where $\sigma_{ess}(|T^*|) = \{\alpha\}$,  $\alpha \geq 0$ and
 \begin{enumerate}
	\item[(i)] if $(\alpha, \|T\|] \cap \sigma(|T^*|) = \{\alpha_i\}_{i=1}^n$ for some $n \in \mathbb{N} \cup \{\infty\}$, then $S_0 = \displaystyle \bigoplus_{i=1}^{n} \alpha_i U_i$, where $U_i$ is a unitary on $N(|T^*| - \alpha_i I)$.
	
	If $\alpha = \|T\|$, then $H_0 = \{0\}$.
	
	\item[(ii)] If $H_1 \neq \{0\}$, then $S_1$ is a co-isometry on $H_1$. Otherwise $S_1 =0$.
	\item [(iii)] $A_1 \in \mathcal{B}(H_1, H_2)$ and $B_1 \in \mathcal{B}(H_2)$ are  such that $S_1A^*_1 = 0, A_1A^*_1 +B_1B^*_1= \displaystyle \bigoplus_{j=1}^{m} \beta^2_jI_j$ and $B^*_1B_1 \leq \displaystyle \bigoplus_{j=1}^{m} \beta^2_jI_j$, where $[m(T^*), \alpha) \cap \sigma(|T^*|) = \{\beta_j\}_{j=1}^m$ for some $m \in \mathbb{N} \cup \{\infty\}$ and $I_j$ is identity on $N(|T^*|- \beta_jI)$.
	
	If $\alpha = m(T^*)$, then $H_2 = \{0\}$ and $ T = \|T\|S_1$ if $\|T\|= \alpha = m(T^*)$.
\end{enumerate}

\begin{proof}
	Since $T \in \overline{\mathcal{AN}(H)}$, by \cite[Theorem 3.14]{RAMSSS}, we have  $T^*T \in \overline{\mathcal{AN}(H)}_{+}$ and hence by \cite[Theorem 4.5]{RAMSSS}, $\sigma_{ess}(T^*T) = \{\alpha\}$ for some $\alpha \geq 0$.
	
	 We first prove that $\sigma_{ess}(T^*T) = \sigma_{ess}(TT^*)$. By \cite[Theorem 6, Page 173]{MUL}, we have $\sigma_{ess}(T^*T) \setminus \{0\} = \sigma_{ess}(TT^*)\setminus \{0\}$.
	
	 If $\alpha =0$, then $T^*T \in \mathcal{K}(H)$, so is $TT^*$. Hence in this case $\sigma_{ess}(T^*T) = \sigma_{ess}(TT^*)$.
	
	Let $\alpha \neq 0$. Suppose $0 \in \sigma_{ess}(TT^*)$. Then by \cite[Theorem VII.11, Page 236]{ReedSimon} either  $0$ is an eigenvalue with infinite multiplicity or it is the limit point of $\sigma(TT^*)$.
	
	If $N(T^*)$  is infinite dimensional, then $N(T)$ is also infinite dimensional as $T^*$ is hyponormal it satisfies $N(T^*) \subset N(T)$, so $\alpha =0$ which is a contradiction.
	
	If $0$ is the limit point of $\sigma(TT^*)$, then we get $\alpha =0$ as $\sigma(TT^*) \setminus \{0\} = \sigma(T^*T) \setminus \{0\}$, contradicting our assumption. Hence $\sigma_{ess}(T^*T) = \sigma_{ess}(TT^*)$.
	
	Therefore by \cite[Theorem 4.12]{RAMSSS}, we get $T^* \in \overline{\mathcal{AN}(H)}$. By Theorem \ref{hypo2}, we have
	
	\begin{equation*}
	T^* =
	\begin{blockarray}{cccc}
	H_0 & H_1 & H_2 \\
	\begin{block}{(ccc)c}
	V_0 & 0 & 0 & H_0 \\
	0 & \alpha V_1 & A & H_1 \\
	0 & 0 & B & H_2\\
	\end{block}
	\end{blockarray}.
	\end{equation*}
	Hence,
		\begin{equation*}
	T =
	\begin{blockarray}{cccc}
	H_0 & H_1 & H_2 \\
	\begin{block}{(ccc)c}
	S_0 & 0 & 0 & H_0 \\
	0 & \alpha S_1 & 0 & H_1 \\
	0 & A_1 & B_1 & H_2 \\
	\end{block}
	\end{blockarray},
	\end{equation*}
	
where $S_0 = V^*_0, S_1 = V^*_1, A_1 = A^*,B_1 = B^* $ satisfies the required conditions from Theorem \ref{hypo2}.

\end{proof}
\begin{corollary}\label{TnormaliffBnormalV_1unitary}
Let $T$ has the representation as in Equation \eqref{Thypo} and satisfies the hypothesis of Theorem \ref{hypo2}, then $T$ is normal iff $V_1$ is unitary for $V_1 \neq 0$ and $B$ is normal.
\end{corollary}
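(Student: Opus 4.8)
The plan is to characterise normality of $T$ directly from the block form in \eqref{Thypo} via the identity $T^*T=TT^*$. Write $D:=\bigoplus_{j=1}^{m}\beta_j^{2}I_{N(|T|-\beta_j I)}$; by Theorem~\ref{hypo2} we have $V_1^*V_1=I_{H_1}$, $V_1^*A=0$, $A^*A+B^*B=D=BB^*$, and $V_0$ is normal. A direct computation from \eqref{Thypo} then shows that $T^*T-TT^*$ vanishes on the $H_0$-summand and, relative to $H_1\oplus H_2$, equals
\[
\begin{pmatrix}
\alpha^{2}P_{N(V_1^*)}-AA^* & -AB^* \\
-BA^* & A^*A+B^*B-BB^*
\end{pmatrix}.
\]
Since $A^*A+B^*B=D=BB^*$, the $(2,2)$-entry is identically zero, so $T$ is normal if and only if $AA^*=\alpha^{2}P_{N(V_1^*)}$ and $AB^*=0$. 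Thus the corollary reduces to showing that, under the standing hypotheses, this pair of identities is equivalent to ``$V_1$ unitary (whenever $V_1\neq0$) and $B$ normal'', and in fact to the single condition $A=0$.

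One implication is immediate: if $B$ is normal then $B^*B=BB^*=D$, hence $A^*A=D-B^*B=0$, i.e.\ $A=0$; then $T=V_0\oplus\alpha V_1\oplus B$ is an orthogonal direct sum of the normal operators $V_0$, $\alpha V_1$ (a scalar multiple of a unitary, or the zero operator) and $B$, so $T$ is normal. I would treat the degenerate case $\alpha=0$ separately at the outset: there $|T|=-K_1+K_2$ is compact, so $T$ is compact hyponormal and therefore normal by Theorem~\ref{compacthypo}, while the representation collapses ($H_2=\{0\}$, $B=0$); so I may assume $\alpha>0$.

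For the converse, suppose $T$ is normal, so $AA^*=\alpha^{2}P_{N(V_1^*)}$, and the point is to deduce $A=0$. Since $\overline{R(A)}=\overline{R(AA^*)}=R(P_{N(V_1^*)})$ is closed, $P_{N(V_1^*)}A=A$, whence $AA^*A=\alpha^{2}A$ and therefore $(A^*A)^{2}=\alpha^{2}A^*A$; hence $\sigma(A^*A)\subseteq\{0,\alpha^{2}\}$ and $A^*A=\alpha^{2}P_{N(A)^{\perp}}$. If $A\neq0$, pick a unit vector $x\in N(A)^{\perp}\subseteq H_2$ with $A^*Ax=\alpha^{2}x$; writing $x=\sum_j x_j$ with $x_j\in N(|T|-\beta_j I)$ and using $A^*A=D-B^*B$ we get
\[
\alpha^{2}=\langle A^*Ax,x\rangle=\langle Dx,x\rangle-\|Bx\|^{2}\le\langle Dx,x\rangle=\sum_j\beta_j^{2}\|x_j\|^{2}<\alpha^{2},
\]
the last step being strict since every $\beta_j<\alpha$ and $x\neq0$. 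This contradiction gives $A=0$; then $B^*B=D=BB^*$ makes $B$ normal, and $\alpha^{2}P_{N(V_1^*)}=AA^*=0$ forces $N(V_1^*)=\{0\}$, so the isometry $V_1$ is unitary whenever $V_1\neq0$. The main obstacle is exactly this implication $T\text{ normal}\Rightarrow A=0$: the crux is the spectral gap — all the $\beta_j$ lie strictly below $\alpha=\min\sigma_{ess}(|T|)$, so $\langle Dx,x\rangle<\alpha^{2}\|x\|^{2}$ for every $x\neq0$, whereas $AA^*=\alpha^{2}P_{N(V_1^*)}$ forces $A^*A$ to attain the value $\alpha^{2}$ unless $A=0$, which is incompatible with $A^*A=D-B^*B\le D$.
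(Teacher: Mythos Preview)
Your proof is correct but takes a genuinely different route from the paper. For the converse, the paper observes that a normal $T$ is in particular quasinormal, and invokes Lemma~\ref{quasinormalreducingsubspace} to conclude that $H_1=N(|T|-\alpha I)$ reduces $T$; this immediately forces $A=0$, after which the normality of $B$ and the unitarity of $V_1$ are read off. For the forward implication, the paper uses $V_1$ unitary together with $V_1^{*}A=0$ to get $A=V_1V_1^{*}A=0$ directly. By contrast, your argument is entirely self--contained at the block level: you show that normality collapses to $AA^{*}=\alpha^{2}P_{N(V_1^{*})}$, and then exploit the spectral gap $\beta_j<\alpha$ via $A^{*}A\le D$ to force $A=0$; in the forward direction you reach $A=0$ from the normality of $B$ combined with the identity $BB^{*}=D$ of Theorem~\ref{hypo2}(iii). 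The paper's route is shorter because it recycles Lemma~\ref{quasinormalreducingsubspace}; yours is more elementary and makes explicit \emph{why} the strict inequality $\beta_j<\alpha$ is the mechanism that kills $A$. One remark on robustness: your forward direction (and your reduction of the $(2,2)$-block to zero) leans on the equality $BB^{*}=D$ as stated in Theorem~\ref{hypo2}(iii); in the companion results Lemma~\ref{hyponormal1} and Theorem~\ref{AMHypo} only the inequality $BB^{*}\le D$ is asserted, and if one works with that weaker hypothesis your argument still goes through but you would obtain $A=0$ from $V_1$ unitary (as the paper does) rather than from $B$ normal.
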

\begin{proof}
	Let us assume that $V_1$ is unitary and $B$ is normal. Clearly it can be seen that as $V^*_1A =0$, this implies $A = 0$. Hence $T$ is normal.
	
	Conversely, let $T$ be normal. If $H_1 = N(|T|-\alpha I) \neq \{0\}$, then by Lemma \ref{quasinormalreducingsubspace}, we get $H_1$ reduces $T$. Hence $A = 0$. The normality of $T$ forces $B$ to be normal and $V_1$ to be unitary.
	
	If $H_1 =\{0\}$, then clearly $V_1 = 0$, $A=0$. In this case also, if $T$ is normal, then $B$ is normal.
\end{proof}

\section{Conditions implying the normality of hyponormal operators in $\overline{\mathcal{AN}(H)}$}

In this section, we check for normality of hyponormal operators in $\overline{\mathcal{AN}(H)}$ under some conditions. Similar kinds of results are proved for the hyponormal operators in $\mathcal{AN}(H)$ in \cite{BALA2}.

\begin{theorem}\label{invnormal1}
	Let $T \in \overline{\mathcal{AN}(H)}$ be hyponormal. If $T$ is invertible, then $T$ is normal.
\end{theorem}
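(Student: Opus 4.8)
The plan is to feed $T$ into the structure theorem for hyponormal operators in $\overline{\mathcal{AN}(H)}$ (Theorem~\ref{hypo2}) and then invoke the normality criterion of Corollary~\ref{TnormaliffBnormalV_1unitary}. First I would note that invertibility of $T$ forces invertibility of $|T|=(T^*T)^{1/2}$, so $0\notin\sigma(|T|)$; since $|T|\in\overline{\mathcal{AN}(H)}_{+}$ its essential spectrum $\sigma_{ess}(|T|)=\{\alpha\}$ is a singleton by Theorem~\ref{positiveessentialspectrum}, and $\alpha\in\sigma(|T|)$ gives $\alpha>0$, while $m(T)=m(|T|)=\|\,|T|^{-1}\|^{-1}>0$. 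Applying Theorem~\ref{hypo2}, write $T$ in the form \eqref{Thypo} on $H=H_0\oplus H_1\oplus H_2$, with $V_0=\bigoplus_i\alpha_iU_i$ normal, $H_1=N(|T|-\alpha I)$, $H_2=\bigoplus_j N_j$ where $N_j=N(|T|-\beta_jI)$ is finite dimensional and $\beta_j\in[m(T),\alpha)$, $V_1^*A=0$, and $A^*A+B^*B=D:=\bigoplus_j\beta_j^2 I_{N_j}=BB^*$. Since $H_0$ reduces $T$, the block $T_2=\left(\begin{smallmatrix}\alpha V_1&A\\0&B\end{smallmatrix}\right)$ on $H_1\oplus H_2$ is invertible, so $T_2^*T_2=\alpha^2 I_{H_1}\oplus D$ is invertible; hence $D$ is invertible, every $\beta_j\geq m(T)>0$, and $B$ is onto.

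The core step is to show $A=0$. Computing the self-commutator and using $V_1^*V_1=I$, $V_1^*A=0$ and $A^*A+B^*B=BB^*$, one gets
\[
T_2^*T_2-T_2T_2^*=\begin{pmatrix}\alpha^2\bigl(I_{H_1}-V_1V_1^*\bigr)-AA^* & -AB^*\\[2pt] -BA^* & 0\end{pmatrix}\ \geq\ 0 ,
\]
and a positive $2\times2$ operator matrix whose $(2,2)$ entry vanishes must have vanishing off-diagonal entries, so $AB^*=0$. As $B$ is onto, $R(B^*)=N(B)^{\perp}$ is closed and $A$ vanishes on it; then $A^*A=D-B^*B$ vanishes on $N(B)^{\perp}$, so for every $w$ we have $DB^*w=B^*BB^*w=B^*Dw$, i.e. $DB^*=B^*D$ and hence $BD=DB$. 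Therefore $B$ preserves each eigenspace of $D$, and these are exactly the finite dimensional spaces $N_j$; thus $B=\bigoplus_j B_j$ with $B_j\in\mathcal{B}(N_j)$. From $BB^*=D$ we read off $B_jB_j^*=\beta_j^2 I_{N_j}$, and on a finite dimensional space a co-isometry is unitary, whence $B_j^*B_j=\beta_j^2 I_{N_j}$ as well; in particular $B$ is normal and $A^*A|_{N_j}=\beta_j^2 I-B_j^*B_j=0$ for every $j$, so $A=0$.

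With $A=0$ the block $T_2$ splits as $\alpha V_1\oplus B$, and invertibility of $T_2$ forces $V_1$ to be onto, hence unitary (it is already an isometry). So $V_1$ is unitary and $B$ is normal, and Corollary~\ref{TnormaliffBnormalV_1unitary} gives that $T$ is normal; equivalently $T=V_0\oplus\alpha V_1\oplus B$ is a direct sum of normal operators. The step I expect to be the main obstacle is the middle one: extracting $AB^*=0$ from hyponormality and then exploiting the commutation $BD=DB$ to decompose $B$ over the \emph{finite dimensional} eigenspaces of $D$ — this is precisely where the membership of $T$ in $\overline{\mathcal{AN}(H)}$ is used in an essential way, through the finiteness of the multiplicities $\dim N(|T|-\beta_jI)$ and the equality $BB^*=D$ (an inequality would not suffice), the necessity of which is illustrated by the invertible hyponormal non-normal operator $S+2I$, $S$ the unilateral shift.
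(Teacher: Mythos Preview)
Your argument rests entirely on the equality $BB^*=D$ that you quote from the statement of Theorem~\ref{hypo2}, and you yourself flag that ``an inequality would not suffice.'' Unfortunately that equality is a misprint: the proof of Theorem~\ref{hypo2} reduces in every case to Lemma~\ref{hyponormal1}, Theorem~\ref{AMHypo}, or \cite[Theorem~3.5]{BALA2}, all of which establish only $BB^*\leq D$ (look at the last displayed computation in the proof of Lemma~\ref{hyponormal1}). With only $BB^*\leq D$ the $(2,2)$ block of $T_2^*T_2-T_2T_2^*$ is $D-BB^*\geq 0$ rather than $0$, so positivity of the self-commutator no longer forces $AB^*=0$, and your chain $AB^*=0\Rightarrow DB^*=B^*D\Rightarrow B=\bigoplus_jB_j\Rightarrow A=0$ collapses at its first link. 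Invertibility of $T_2$ gives that $B$ is surjective and that $N(A)\cap N(B)=\{0\}$, but these do not by themselves recover $AB^*=0$ or upgrade $BB^*\leq D$ to equality.

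The paper takes a completely different route that sidesteps this obstacle. After splitting off the normal summand $V_0$ via Corollary~\ref{directsumANandAM}, the remaining piece $T_1=\left(\begin{smallmatrix}\alpha V_1&A\\0&B\end{smallmatrix}\right)$ is an invertible hyponormal $\mathcal{AM}$-operator on $H_0^\perp$; rather than analyse its blocks, one inverts. By \cite[Theorem~3.8]{BALA1} the inverse of an invertible $\mathcal{AM}$-operator lies in $\mathcal{AN}$, and the inverse of an invertible hyponormal operator is again hyponormal (from $T_1^*T_1\geq T_1T_1^*$ with both sides invertible one gets $(T_1^*T_1)^{-1}\leq(T_1T_1^*)^{-1}$, i.e.\ $T_1^{-1}(T_1^{-1})^*\leq(T_1^{-1})^*T_1^{-1}$). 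Thus $T_1^{-1}$ is an invertible hyponormal $\mathcal{AN}$-operator, and for that class normality is already known from \cite[Theorem~3.14]{NB *-para}; hence $T_1$, and therefore $T$, is normal. Your block-by-block computation would be a more self-contained argument if it worked, but as written it leans on a typo; to repair it you would need an independent reason, specific to the invertible case, why $BB^*=D$ (equivalently, why the $(2,2)$ corner of the self-commutator vanishes).
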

\begin{proof}
	From equation \eqref{Thypo}, $T$ can be represented as,
		\begin{equation*}
	T =
	\begin{blockarray}{cccc}
	H_0 & H_1 & H_2 \\
	\begin{block}{(ccc)c}
	V_0 & 0 & 0 & H_0 \\
	0 & \alpha V_1 & A & H_1 \\
	0 & 0 & B & H_2 \\
	\end{block}
	\end{blockarray},
	\end{equation*}
 where $V_0 = \displaystyle \bigoplus_{i=1}^{n} \alpha_i U_i$ such that $U_i$ is unitary on $N(|T| - \alpha_i I)$, if  $(\alpha, \|T\|] \cap \sigma(|T|) = \{\alpha_i\}_{i=1}^n$; for some $n \in \mathbb{N} \cup \{\infty\}$. Hence $V_0$ is normal. Let
  \[T_1 =
 \begin{blockarray}{cc}
\begin{block}{(cc)}
 \alpha V_1 & A  \\
 0 & B  \\
 \end{block}
 \end{blockarray}.
 \]
Then $T_1 \in \overline{\mathcal{AN}(H_0^\bot)}$ is hyponormal and invertible. But by Corollary \ref{directsumANandAM}, $T_1 \in \mathcal{AM}(H_0^\bot)$. Therefore by \cite[Theorem 3.8]{BALA1}, we get $T_1^{-1} \in \mathcal{AN}(H)$ and hyponormal. So by \cite[Theorem 3.14]{NB *-para}, we conclude that $T_1^{-1}$ is normal, in turn $T_1$ is normal. Hence $T$ is normal.

\end{proof}

\begin{corollary}\label{invnormal2}
	Let $T \in \overline{\mathcal{AN}(H)}$ be hyponormal. If $N(T) = N(T^*)$, then $T$ is normal.
\end{corollary}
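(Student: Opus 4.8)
The plan is to reduce the hypothesis $N(T)=N(T^*)$ to invertibility after peeling off the nullspace, so that Theorem \ref{invnormal1} applies. First I would invoke the representation of $T$ from Theorem \ref{hypo2}, so that $T = V_0 \oplus T_1$ on $H = H_0 \oplus H_0^\bot$, where $V_0$ is a finite or countable direct sum of the unitaries $\alpha_i U_i$ (hence invertible on $H_0$ when every $\alpha_i\neq 0$, which holds since $\alpha_i\in(\alpha,\|T\|]$ with $\alpha\geq 0$) and $T_1\in\mathcal{AM}(H_0^\bot)$ is hyponormal. Consequently $N(T)\subseteq H_0^\bot$ and it suffices to show $T_1$ is normal. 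Since hyponormality gives $N(T)\subseteq N(T^*)$ always, the extra hypothesis $N(T)=N(T^*)$ says precisely that $N(T)$ reduces $T$: indeed for a hyponormal operator $N(T)=N(T^*)$ implies $T(N(T)^\bot)\subseteq N(T)^\bot$ as well, so $N(T)$ is a reducing subspace on which $T$ acts as $0$ (which is trivially normal).

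Next I would split off $N(T)$: write $T = 0_{N(T)} \oplus \widetilde{T}$ on $H = N(T)\oplus N(T)^\bot$, where $\widetilde{T} = T|_{N(T)^\bot}$ is injective, hyponormal, and (by \cite[Lemma 3.13]{RAMSSS} together with the fact that restricting to a reducing subspace preserves membership in $\overline{\mathcal{AN}}$, as used in the proof of Theorem \ref{hypo2}) lies in $\overline{\mathcal{AN}(N(T)^\bot)}$. It remains to see that $\widetilde{T}$ is in fact invertible, so that Theorem \ref{invnormal1} finishes the argument. Injectivity of $\widetilde{T}$ gives $\alpha=m(T|_{N(T)^\bot})>0$ is not automatic, so the key point is to rule out $0\in\sigma(\widetilde T)$ through the residual part: since $\widetilde T$ is injective, $0\notin\sigma_p(\widetilde T)$; and $N(\widetilde T^*)=N(T^*)\cap N(T)^\bot = N(T)\cap N(T)^\bot = \{0\}$ using the hypothesis, so $\overline{R(\widetilde T)}=N(T)^\bot$. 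Finally, from the representation $|T| = \alpha I - K_1 + K_2$ applied to $\widetilde T$ (or directly: $\widetilde T\in\overline{\mathcal{AN}}$ has $\sigma_{ess}(|\widetilde T|)$ a singleton $\{\alpha\}$ and $R(\widetilde T)$ closed by \cite[Corollary 3.8]{RAMSSS}), $\widetilde T$ has closed range; combined with injectivity and dense range this yields $0\notin\sigma(\widetilde T)$, i.e.\ $\widetilde T$ is invertible. Then Theorem \ref{invnormal1} gives $\widetilde T$ normal, hence $T = 0\oplus\widetilde T$ is normal.

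The main obstacle I anticipate is the bookkeeping needed to be sure that $N(T)$ genuinely reduces $T$ and that the restriction to $N(T)^\bot$ stays inside $\overline{\mathcal{AN}(H)}$ — that is, justifying the two "peeling" steps rigorously rather than waving at them. The reduction $N(T)=N(T^*)\Rightarrow N(T)$ reduces $T$ is standard for hyponormal operators but should be spelled out (for $x\in N(T)^\bot$ and $y\in N(T)=N(T^*)$, $\langle Tx,y\rangle = \langle x,T^*y\rangle = 0$). The closed-range argument is where one must be careful not to circularly assume invertibility: the correct route is \cite[Corollary 3.8]{RAMSSS}, which guarantees closed range for operators in $\overline{\mathcal{AN}(H)}$ whose defining parameter $\alpha$ is handled appropriately, together with the elementary fact that an injective operator with closed dense range is invertible by the open mapping theorem.
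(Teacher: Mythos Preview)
Your core approach --- split off $N(T)$ as a reducing subspace, show $\widetilde T = T|_{N(T)^\bot}$ is invertible, and invoke Theorem \ref{invnormal1} --- is exactly the paper's. The opening detour through Theorem \ref{hypo2} and the $H_0\oplus H_0^\bot$ decomposition is superfluous and never feeds into what follows; the paper goes straight to the $N(T)\oplus N(T)^\bot$ split, citing \cite[Lemma 2]{STA} for the fact that $N(T)$ reduces any hyponormal $T$ (this holds without the extra hypothesis $N(T)=N(T^*)$, since hyponormality already gives $N(T)\subseteq N(T^*)$, whence $R(T)\subseteq N(T)^\bot$).

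There is, however, one genuine gap: you do not treat the compact case. Your invertibility argument for $\widetilde T$ relies on closed range via \cite[Corollary 3.8]{RAMSSS}, but that result is stated for $T\notin\mathcal{K}(H)$ (equivalently $\alpha\neq 0$). If $T$ is compact and $N(T)^\bot$ is infinite-dimensional, then $\widetilde T$ is an injective compact operator on an infinite-dimensional space and \emph{cannot} have closed range, so the reduction to Theorem \ref{invnormal1} breaks down outright. The paper disposes of the compact case first via the classical fact that compact hyponormal operators are normal (Theorem \ref{compacthypo}, or \cite[Corollary 1]{STA}); only afterward does it assume $T\notin\mathcal{K}(H)$ and appeal to \cite[Corollary 3.8]{RAMSSS} for finite-dimensional kernel and closed range. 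Your parenthetical ``whose defining parameter $\alpha$ is handled appropriately'' gestures at the issue but does not resolve it --- you must either treat the compact case separately or explicitly restrict to $\alpha>0$ and explain why.
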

\begin{proof}
	If $T$ is compact, then by \cite[Corollary 1]{STA}, we get that $T$ is normal.
	
	Hence we assume that $T$ is not compact. Then from \cite[Corollary 3.8]{RAMSSS}, we have $N(T)$ is finite dimensional and $R(T)$ is closed. Since $T$ is hyponormal, it is clear that $N(T)$ reduces $T$ \cite[Lemma 2]{STA}. Hence with respect to $H = N(T) \oplus N(T)^\bot$, $T$ can be written as:
	\[	T =
	\begin{blockarray}{ccc}
	N(T) & N(T)^\bot \\
	\begin{block}{(cc)c}
	0 & 0 & N(T) \\
	0 & T_1 & N(T)^\bot \\
\end{block}
	\end{blockarray}.\]
	
As $N(T) = N(T^*)$, we have  $T_1 \in \overline{\mathcal{AN}(N(T)^\bot)}$ is invertible, hyponormal operator. Therefore by Theorem \ref{invnormal1}, we get $T_1$ is normal. Hence $T$ is normal.
\end{proof}
\begin{remark}
An operator $T \in \mathcal{B}(H)$ is said to be an EP-operator if $R(T)$ is closed and $R(T) = R(T^*)$. This is equivalent to saying that $N(T) = N(T^*)$ and $R(T)$ is closed. Since in Corollary \ref{invnormal2} we are considering $N(T) = N(T^*)$, this class of operators contains $EP$-operators. More details about this class can be found in \cite{CAM}.
\end{remark}

\begin{theorem}
	If $T \in \overline{\mathcal{AN}(H)}$ is hyponormal such that $\sigma_{ess}(T) = w(T)$, then $T$ is normal.
\end{theorem}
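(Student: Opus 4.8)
The plan is to split $T$ into its already-normal part and an absolutely minimum attaining hyponormal part, and then to play the coincidence $\sigma_{ess}(T)=w(T)$ against the structure of the latter. If $T$ is compact then Theorem~\ref{compacthypo} gives normality at once, so assume $T$ is not compact; then by Theorem~\ref{positiveessentialspectrum} the single point $\alpha$ of $\sigma_{ess}(|T|)$ is strictly positive. By Corollary~\ref{directsumANandAM} we may write $T=T_1\oplus T_2$ on $H=H_0\oplus H_0^\bot$, where $T_1\in\mathcal{AN}(H_0)$ is normal and $T_2\in\mathcal{AM}(H_0^\bot)$ is hyponormal with $\|T_2\|=\alpha$ and $\sigma_{ess}(|T_2|)=\{\alpha\}$. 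Since $T_1$ is normal it is enough to show that $T_2$ is normal; note also that $T_2\in\overline{\mathcal{AN}(H_0^\bot)}$.

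First I would transfer the spectral hypothesis to $T_2$. Because $T_1$ is normal, $T_1-\lambda$ has index $0$ whenever it is Fredholm, so by additivity of the index on direct sums one obtains $w(T)=\sigma_{ess}(T)\cup\big(w(T_2)\setminus\sigma_{ess}(T_2)\big)$. On the one hand, from the representation of Theorem~\ref{hypo2}, $T_1=\bigoplus_i\alpha_iU_i$ is a direct sum of finite rank normal operators with $\alpha_i\downarrow\alpha$, so $\sigma_{ess}(T_1)\subseteq\{z:|z|=\alpha\}$. On the other hand, every $\lambda\in w(T_2)\setminus\sigma_{ess}(T_2)$ is a point at which $T_2-\lambda$ is Fredholm with nonzero index, necessarily negative because $T_2-\lambda$ is hyponormal (so $N(T_2-\lambda)\subseteq N(T_2^*-\bar\lambda)$), whence $\bar\lambda\in\sigma_p(T_2^*)$ and $|\lambda|<\|T_2\|=\alpha$. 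Thus $w(T_2)\setminus\sigma_{ess}(T_2)$ is disjoint from $\sigma_{ess}(T_1)$, and the hypothesis $\sigma_{ess}(T)=w(T)$ forces $w(T_2)=\sigma_{ess}(T_2)$.

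Now $T_2^*T_2=|T_2|^2=\alpha^2 I-K$ for some $K\in\mathcal{K}(H_0^\bot)_+$, so $0\notin\sigma_{ess}(T_2^*T_2)$. If $0\notin\sigma_{ess}(T_2)$, then $T_2$ is Fredholm, and since $w(T_2)=\sigma_{ess}(T_2)$ we get $0\notin w(T_2)$, i.e. $\operatorname{ind}(T_2)=0$; combined with $N(T_2)\subseteq N(T_2^*)$ (hyponormality) and the finiteness of both kernels this yields $N(T_2)=N(T_2^*)$, and Corollary~\ref{invnormal2} applied to $T_2\in\overline{\mathcal{AN}(H_0^\bot)}$ shows that $T_2$ is normal.

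It remains to treat $0\in\sigma_{ess}(T_2)$, equivalently $0\in\sigma_{ess}(T_2T_2^*)$ (so $N(T_2^*)$ may be infinite dimensional, or $0$ may be a limit point of $\sigma(T_2T_2^*)$); this ``shift-like'' regime is the main obstacle. Here I would leave the purely $\mathcal{AM}$ picture and use the sharper information available for operators in $\overline{\mathcal{AN}(H)}$: in the representation \eqref{Thypo} of Theorem~\ref{hypo2} one has the \emph{equality} $BB^*=\bigoplus_j\beta_j^2 I_{N(|T|-\beta_jI)}$, so feeding this back into $T^*T-TT^*\ge 0$ forces $AB^*=0$ and $\alpha^2(I-V_1V_1^*)\ge AA^*$. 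The task is then to show that $\sigma_{ess}(T_2)=w(T_2)$ is incompatible with $V_1$ being non-unitary on $N(|T|-\alpha I)$ and with $B$ being non-normal, after which Corollary~\ref{TnormaliffBnormalV_1unitary} gives normality of $T_2$, hence of $T$. I expect this final implication --- ruling out a proper isometric part of $V_1$ from the coincidence of the essential and Weyl spectra --- to be the delicate point of the whole argument.
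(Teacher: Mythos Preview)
Your argument is incomplete: you yourself flag the case $0\in\sigma_{ess}(T_2)$ as ``the delicate point'' and do not carry it out. Ruling out a non-unitary $V_1$ (and proving $B$ normal) from $w(T_2)=\sigma_{ess}(T_2)$ alone is precisely the crux of the theorem, and nothing in the structural data of Theorem~\ref{hypo2} hands it to you. There is also a secondary problem: the \emph{equality} $BB^*=\bigoplus_j\beta_j^2 I_{N(|T|-\beta_jI)}$ you invoke from \eqref{Thypo} is a misprint in the statement of Theorem~\ref{hypo2}; the proofs of Lemma~\ref{hyponormal1} and Theorem~\ref{AMHypo}, on which Theorem~\ref{hypo2} rests, establish only $BB^*\le\bigoplus_j\beta_j^2 I_{N(|T|-\beta_jI)}$, so the consequences you draw from the equality (in particular $AB^*=0$) are unjustified. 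Finally, the strict inequality $|\lambda|<\alpha$ for $\lambda\in w(T_2)\setminus\sigma_{ess}(T_2)$, needed in your transfer step, is asserted but not argued.

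The paper's proof is short and bypasses the decomposition entirely. Write $T=V|T|$ with $V$ an isometry (available for hyponormal $T$); since $|T|=\alpha I+(\text{compact})$ one gets $T=\alpha V+K$ with $K$ compact, hence $\sigma_{ess}(T)=\sigma_{ess}(\alpha V)\subseteq C(0,\alpha)$. By Coburn's theorem (Weyl's theorem for hyponormal operators) $\sigma(T)=w(T)\cup\pi_{00}(T)$, so under the hypothesis $w(T)=\sigma_{ess}(T)$ the spectrum of $T$ lies in a circle together with an at most countable set and therefore has planar measure zero. Putnam's inequality $\|T^*T-TT^*\|\le\pi^{-1}\,\mathrm{Area}(\sigma(T))$ then forces $T$ to be normal. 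This area argument is exactly what your structural approach is missing in the ``shift-like'' regime.
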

\begin{proof}
 Since $T$ is hyponormal, let $T = V|T|$ be a factorization of $T,$ such that $V$ is an isometry \cite[Page 4]{XIA}. Then by \cite[Theorem 3.7]{RAMSSS}, we get
$T = \alpha V + K_1$, where $K_1 \in \mathcal{K}(H)$. Now, from the Weyl's Theorem for the essential spectrum, we have $\sigma_{ess}(T) = \sigma_{ess}(\alpha V)$ and $\sigma_{ess}(\alpha V) \subseteq C(0, \alpha)$. As $T$ is hyponormal, by \cite[Theorem 3.1]{COB}, we have $\sigma(T) = w(T) \cup \pi_{00}(T)$. So
 \begin{equation*}
 \text{Area}(\sigma(T)) = \text{Area}(w(T)) = \text{Area}(\sigma_{ess}(T)) = \text{Area}(\sigma_{ess}(\alpha V)) = 0.
 \end{equation*}
 Hence by the Putnam's inequality \cite[Theorem 1]{Putnam}, we have
 \begin{equation*}
 \|T^*T - TT^*\| \leq \frac{1}{\pi}(\text{Area}(\sigma(T))) = 0.
\end{equation*}
This implies $T$ is normal.
 \end{proof}

There are still larger classes of non-normal operators such as paranormal operators, $\ast$-paranormal operators etc., which are defined as follows.
\begin{definition}
	Let $T \in \mathcal{B}(H)$. Then $T$ is said to be
	\begin{enumerate}
		\item paranormal if $\|Tx\|^2 \leq \|T^2x\| \|x\|$ for all $x \in H$.
		\item $\ast$- paranormal if $\|T^*x\|^2 \leq \|T^2x\| \|x\|$ for all $x \in H$.
	\end{enumerate}
	
\end{definition}

These classes of operators contain the class of hyponormal operators. Hence we end this section with the following problem.

\begin{prob}
Give a representation of $T \in \overline{\mathcal{AN}(H)}$, when $T$ is paranormal and $\ast$-paranormal. 
\end{prob}

The details of the above question(s) will be published in the forthcoming paper(s).
	\section*{Funding}
The first author is supported by SERB Grant No. MTR/2019/001307, Govt. of India. The second author is supported by the  Department of Science and Technology- INSPIRE Fellowship (Grant No. DST/INSPIRE FELLOWSHIP/2018/IF180107).

\section*{Data Availability}
Data sharing is not applicable to this article as no datasets were generated or
analyzed during the current study.

\section*{Conflict of interest}
The authors declare that there are no conflicts of interests.

\end{document}